\newcommand{\beq}{\begin{equation}}
\newcommand{\eeq}{\end{equation}}
\newcommand{\bea}{\begin{eqnarray}}
\newcommand{\eea}{\end{eqnarray}}
\newcommand{\beas}{\begin{eqnarray*}}
\newcommand{\eeas}{\end{eqnarray*}}
\newtheorem{theorem}{Theorem}[section]
\newtheorem*{theorem*}{Theorem}
\newtheorem{prop}[theorem]{Proposition}
\newtheorem{corollary}[theorem]{Corollary}
\newtheorem{lemma}[theorem]{Lemma}
\newtheorem{remark}[theorem]{Remark}
\newtheorem{example}[theorem]{Example}
\theoremstyle{definition}
\newtheorem{definition}[theorem]{Definition}
\newtheorem{notation}[theorem]{Notation}
\numberwithin{equation}{section}
\newcommand{\R}{\mathbb R}
\newcommand{\p}{\mathbb P}
\newcommand{\N}{\mathbb N}
\newcommand{\E}{\mathbb E}
\newcommand{\h}{\mathcal H}
\newcommand{\dH}{{d_1}}
\newcommand{\X}{\mathcal X}
\newcommand{\Y}{\mathcal Y}
\DeclareMathOperator{\ad}{ad}
\newcommand\rw{D_{\frac{1}{n}}S_n}
\newcommand\drw{\Psi_m(Y_n^m)}
\title[Sub-Riemannian LDP]{Large deviations principle for sub-Riemannian random walks}
\author[Gordina]{Maria Gordina{$^{\dag }$}}
\thanks{\footnotemark {$\dag$} Research was supported in part by NSF Grant DMS-2246549.}
\address{ Department of Mathematics\\
University of Connecticut\\
Storrs, CT 06269,  U.S.A.}
\email{maria.gordina@uconn.edu}
\author[Melcher]{Tai Melcher{$^{\dag\dag }$}}
\thanks{\footnotemark {$\dag\dag$} Research was supported in part by NSF Grant DMS-1255574.}
\address{Department of Mathematics \\
University of Virginia\\
Charlottesville, VA 22904, U.S.A.}
\email{melcher@virginia.edu}
\author[Mikulincer]{Dan Mikulincer{$^{\ddag}$}}
\thanks{\footnotemark {$\ddag$} Research was supported in part by Simons Investigator award (622132, PI- Mossel)}
\address{Department of Mathematics \\
Massachusetts Institute of Technology\\
Cambridge, MA 02139, U.S.A.}
\email{danmiku@mit.edu}
\author[Wang]{Jing Wang{$^{\mathsection}$}}
\thanks{\footnotemark {$\mathsection$} Research was supported in part by  NSF Grant DMS-1855523 and DMS-2246817}
\address{ Department of Mathematics\\
Purdue University\\
West Lafayette, IN 47907,  U.S.A.}
\email{jingwang@purdue.edu}
\keywords{large deviations, Carnot groups, random walks}
\subjclass{Primary 60G50, 60F10; Secondary  53C17, 22E99}
\begin{document}

\begin{abstract}
We study large deviations for random walks on stratified (Carnot) Lie groups. For such groups, there is a natural collection of vectors which generates their Lie algebra, and we consider random walks with increments in only these directions. Under certain constraints on the distribution of the increments, we prove a large deviation principle for these random walks with a natural rate function adapted to the sub-Riemannian geometry of these spaces.
\end{abstract}

\maketitle

\tableofcontents

\section{Introduction}\label{s.intro}
The study of large deviations for random walks goes back to Cram\'{e}r's work in 1938. A general version of Cram\'{e}r's theorem says that, given an i.i.d.~sequence of random variables $Y_1, Y_{2}, \dots$ taking values in a locally convex vector Polish space, under suitable conditions on the tail distribution of the random variable $Y_1$, the random walks induced by $\{Y_i\}_{i =1}^{\infty}$ satisfy a Large Deviations Principle (LDP) with a good rate function.

Due to its generality, Cram\'{e}r's theorem later became a starting point of various classical LDP results on path spaces (see for instance \cite{DeuschelStroockBook1989, DemboZeitouniBook2010}). However, few results are known addressing the question of how such results depend on the underlying topological space, such as for a random walk on a Riemannian or a sub-Riemannian manifold. Recently Kraaij-Redig-Versendaal in \cite{KraaijRedigVersendaal2019} and Versendaal \cite{Versendaal2019a, Versendaal2019b} obtained a Cram\'{e}r-type LDP for geodesic random walks on Riemannian manifolds that were first introduced by E.~Jorgensen in \cite{JorgensenE1975}. These authors were also able to obtain related Moguslkii's theorem in this setting. Closer to our setting is a Cram\'{e}r-type LDP for random matrix products considered by C.~Sert in \cite{Sert2019}.

Our goal is to study Cram\'{e}r-type theorems in a sub-Riemannian setting, or more precisely on Carnot groups. For such groups the Lie algebras have a stratified structure that ensures H\"{o}rmander's condition is satisfied, and so induces a natural sub-Riemannian manifold structure. While we lack the structure of a vector space, the upshot is that the group structure affords natural constructions of random walks on these curved spaces equipped with a sub-Riemannian metric. For example, in  \cite{Pap1993, Pap1997a, Pap1999}~random walks have been constructed on nilpotent groups, while in \cite{GordinaLaetsch2017, AgrachevBoscainNeelRizzi2018, BoscainNeelRizzi2017} a very different construction has been introduced on more general sub-Riemannian manifolds. In this paper the random walks we consider shall rely on the Lie group structure of the underlying manifold. Such random walks were first considered by Pap in \cite{Pap1993} to prove a central limit theorem on nilpotent Lie groups (see also \cite{Benard2023}). Thus while the small deviations regime is well-understood and covered by central limit theorem type results, our aim is to cover the complementary large deviations regime.

To further motivate our results we note the growing interest in \emph{non-linear large deviations} in recent years (see \cite{ChatterjeeVaradhan2011, ChatterjeeDembo2016, LubetzkyZhao2015, Eldan2018, Augeri2020a} for some prominent examples). A key element in these works is that when the non-linear functional is highly symmetric, such as subgraph counts in Erd\H{o}s-R\'enyi graphs, precise and quantitative large deviations results can be obtained. As it will soon become apparent, our random walks can equivalently be regarded as non-linear, or even polynomial, functionals on appropriate product spaces. In this context, the non-linearity arises from the algebraic structure of the group operation. Furthermore, the group-theoretic aspects of these induced functionals entail numerous symmetries, which we can utilize to derive exact LDPs.

We now present our setting. Let $G$ be a Carnot group with Lie algebra $\mathfrak{g}$. That is, $G$ is a connected, simply-connected Lie group whose Lie algebra may be written as $\mathfrak{g}=\h\oplus\mathcal{V}$ such that any basis $\{\X_1,\ldots,\X_d\}$ of $\h$ satisfies H\"ormander's condition 
\begin{align*}
&
\operatorname{span}\{\X_{i_1},[\X_{i_1},\X_{i_{2}}],\ldots,[\X_{i_1},[\X_{i_{2}},[\cdots,\X_{i_r}]\cdots]]: i_{k}\in\{1,\ldots,d\}\} 
\\
&= \mathfrak{g} \text{ for some } r\in\mathbb{N}.
\end{align*}

We assume that $\h$ is equipped with an inner product $\langle\cdot,\cdot\rangle_\h$, and therefore the Carnot group $G$ has a natural sub-Riemannian structure. Namely, one may use left translation to define a \emph{horizontal distribution} $\mathcal{D}$, a sub-bundle of the tangent bundle $TG$, and a metric on $\mathcal{D}$ as follows. First, we identify the space $\mathcal{H} \subset \mathfrak{g}$ with $\mathcal{D}_{e}\subset T_eG$. Then for $g\in G$ let $L_g$ denote the left translation $L_gh :=gh$, and define $\mathcal{D}_{g}:=(L_g)_{\ast}\mathcal{D}_{e}$ for any $g \in G$. A metric on $\mathcal{D}$ may then be defined by translating back to $\mathcal{H}$,
\begin{align*}
\langle u, v\rangle_{\mathcal{D}_g} &:= \langle  (L_{g^{-1}})_{\ast} u,(L_{g^{-1}})_{\ast} v\rangle_{\mathcal{D}_e} \\
	&= \langle (L_{g^{-1}})_{\ast} u,(L_{g^{-1}})_{\ast} v\rangle_{\h}  \text{ for all } u, v\in\mathcal{D}_g.
\end{align*}
We will sometimes identify the horizontal distribution $\mathcal{D}$ with $\mathcal{H}$. Vectors in $\mathcal{D}$ are called \emph{horizontal}, and we say that a path $\gamma:[0,1]\to G$ is \emph{horizontal} if $\gamma$ is absolutely continuous and $\gamma^{\prime}(t)\in \mathcal{D}_{\gamma(t)}$ for a.e.~$t$. Equivalently, $\gamma$ is horizontal if the Maurer-Cartan form $c_{\gamma}\left( t \right):=( L_{\gamma\left( t \right)^{-1}})_{\ast} \gamma^{\prime} \left( t \right)\in \mathcal{H}$ for a.e. $t$. Such horizontal paths are used to define a left-invariant Carnot-Carath\'{e}odory distance $G$ as one of left-invariant homogeneous distances on $G$. We view the Carnot group $G$ as a metric space with respect to one of such a distance.

Suppose $\left\{ X_{n} \right\}_{n=1}^{\infty}$ is a sequence of i.i.d.~random variables with mean $0$ taking values in $\mathcal{H}$. We consider a \emph{sub-Riemannian random walk} on $G$ defined by
\begin{align*}
& S_0=e,
\\
& S_{n}:=\exp(X_1)\cdots \exp(X_{n}),\quad  n \in \mathbb{N}.
\end{align*}
One of the features of Carnot groups is a dilation which can be used to scale the random walk appropriately. We give more details in Section~\ref{s.carnot}, in particular, \eqref{e.Dilations} lists their main properties. For $a>0$, we denote by $D_{a}: G \longrightarrow G$ the dilation homomorphism on $G$ adapted to its stratified structure. If $\{X_n\} \subset \h$ are i.i.d.~with mean $0$, the law of large numbers says that almost surely
\[
\lim_{n\to \infty}D_{\frac{1}{n}} S_{n}=e;
\]
see for example \cite{Gaveau1977a}, or \cite{Neuenschwander1997}.

To state our results, we denote
\[
\Lambda(\lambda):= \Lambda_X(\lambda) :=\log \E[\exp(\langle\lambda,X_{k}\rangle_\h)]
\]
and let $\Lambda^*$ be the Legendre transform of $\Lambda$ defined by \eqref{e.FL}.
Let $\mu_n$ be the distribution of $D_{\frac{1}{n}} S_{n}$. Our main result is the following Cram\'{e}r-type large deviations principle for $\{\mu_n\}_{n=1}^\infty$.

\begin{theorem}\label{t.main}
Suppose $\{X_{k}\}_{k=1}^\infty$ are i.i.d. with mean $0$ random variables in $\h$ such that $ \Lambda_X(\lambda) $
exists for all $\lambda\in\h$. Further assume that one of the following assumptions is satisfied.
	\begin{enumerate}[(i)]
		\item $G$ is of step $2$ and the distribution of each $X_{k}$ is sub-Gaussian on $\h$;
		\item $G$ is of step $\geqslant 3$ and the distribution of each $X_{k}$ is either a standard Gaussian on $\mathcal{H}$ or has bounded support.
	\end{enumerate}
Then for
\[
S_n:=\exp(X_1)\cdots\exp(X_n),
\]
the measures $\{\mu_n\}_{n=1}^\infty$ satisfy a large deviation principle with rate function
\begin{equation}\label{e.1.1}
J(g) : = \inf\left\{\int_0^1 \Lambda^{\ast}(c_{\gamma}\left( t \right))\,dt :  \gamma:[0,1]\to G \text{ horizontal, } \gamma(0)=e, \gamma(1)=g\right\},
\end{equation}
where $c_{\gamma}$ is the Maurer-Cartan form for the horizontal path $\gamma$.
\end{theorem}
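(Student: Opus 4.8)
The plan is to realize $\mu_{n}$ as the push-forward, under a single sample-path-independent map, of the classical polygonal random walk in $\h\cong\R^{d}$, and then to combine Mogulskii's theorem with a contraction-principle argument. Since $D_{1/n}$ is a dilating automorphism of $G$ that is a group homomorphism and acts by multiplication by $1/n$ on the first stratum $\h$, we have $D_{1/n}S_{n}=\exp(X_{1}/n)\cdots\exp(X_{n}/n)$. Let $Y_{n}\colon[0,1]\to\h$ be the polygonal interpolation of $k\mapsto\frac1n\sum_{i\le k}X_{i}$, so that $\dot Y_{n}(t)=X_{\lceil nt\rceil}$ is piecewise constant, and for absolutely continuous $\phi\colon[0,1]\to\h$ with $\phi(0)=0$ let $\gamma_{\phi}$ be the horizontal path solving $\gamma_{\phi}'(t)=(L_{\gamma_{\phi}(t)})_{\ast}\dot\phi(t)$, $\gamma_{\phi}(0)=e$, and set $\Phi(\phi):=\gamma_{\phi}(1)\in G$, the endpoint of the horizontal development of $\phi$; note $c_{\gamma_{\phi}}=\dot\phi$. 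Because $\dot Y_{n}\equiv X_{i}$ on $[\frac{i-1}{n},\frac{i}{n}]$ we get $\gamma_{Y_{n}}(\frac{i}{n})=\gamma_{Y_{n}}(\frac{i-1}{n})\exp(X_{i}/n)$, hence $\Phi(Y_{n})=\exp(X_{1}/n)\cdots\exp(X_{n}/n)=D_{1/n}S_{n}$, i.e.\ $\mu_{n}=\mathrm{Law}(\Phi(Y_{n}))$.

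Since $\Lambda_{X}$ is finite on all of $\h$, the Legendre transform $\Lambda^{\ast}$ is superlinear, so Mogulskii's theorem applies: $\{Y_{n}\}$ satisfies an LDP in $C([0,1];\h)$ with the uniform topology and good rate function $I(\phi)=\int_{0}^{1}\Lambda^{\ast}(\dot\phi(t))\,dt$, equal to $+\infty$ unless $\phi$ is absolutely continuous with $\phi(0)=0$. By superlinearity of $\Lambda^{\ast}$ and the de la Vall\'ee--Poussin criterion, each sublevel set $\{I\le L\}$ consists of a uniformly integrable --- hence equi-absolutely-continuous and sup-norm compact --- family of paths. On a step-$r$ simply connected nilpotent group, in exponential coordinates the components of $\Phi(\phi)$ are fixed linear combinations of the iterated integrals $\int_{0<t_{1}<\cdots<t_{k}<1}d\phi^{i_{1}}\cdots d\phi^{i_{k}}$ of orders $k\le r$ (Chen's expansion). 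I would then show that $\Phi$ restricted to each $\{I\le L\}$ is continuous: if $\phi_{n}\to\phi$ uniformly inside $\{I\le L\}$ then $\dot\phi_{n}\rightharpoonup\dot\phi$ weakly in $L^{1}$ (Dunford--Pettis, the limit being identified from $\int_{0}^{t}\dot\phi_{n}=\phi_{n}(t)\to\phi(t)$), and then, by induction on $k$, each iterated-integral process $u\mapsto\int_{0<t_{1}<\cdots<t_{k}<u}d\phi_{n}^{i_{1}}\cdots d\phi_{n}^{i_{k}}$ converges uniformly in $u$ and stays equi-absolutely-continuous --- integrating by parts to peel off the last factor, pairing the inductive uniform convergence of the order-$(k-1)$ process with the uniformly integrable derivative, and using Arzel\`a--Ascoli for the uniformity in $u$.

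The genuinely delicate point is that $\Phi$ is \emph{not} continuous on all of $C([0,1];\h)$ --- rapid small oscillations of $\phi$ produce non-vanishing higher iterated integrals --- so the contraction principle cannot be applied naively; one needs in addition exponential tightness of $\{\mu_{n}\}$ on $G$, i.e.\ for each $L$ a compact $K_{L}\subseteq G$ with $\limsup\frac1n\log\mu_{n}(G\setminus K_{L})\le-L$. Expanding $\log(D_{1/n}S_{n})=\sum_{j=1}^{r}\xi_{n}^{(j)}$ by degree via the Baker--Campbell--Hausdorff formula, $\xi_{n}^{(j)}$ is an $n^{-j}$-rescaled sum of $j$-fold nested brackets of the $X_{i}$, which one dominates in terms of $\max_{k\le n}\lvert\sum_{i\le k}X_{i}\rvert$ and $\sum_{i\le n}\lvert X_{i}\rvert$ and then controls by Cram\'er-type tail estimates. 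This is where assumptions (i) and (ii) enter: the bracket bounds degrade with the degree $j$, so a sub-Gaussian tail on $X_{k}$ suffices to control the single nontrivial (L\'evy-area-type) term occurring in step $2$, whereas in step $\ge 3$ one imposes boundedness of $X_{k}$ to control the higher nested-bracket terms. Granting this tightness, the conclusion follows from the standard variant of the contraction principle for maps continuous on the sublevel sets of a good rate function: for closed $F\subseteq G$ one splits $\mu_{n}(F)$ over $\{Y_{n}\in\{I\le L\}\}$ and its complement, uses that $\Phi^{-1}(F)\cap\{I\le L\}$ is closed (previous paragraph) together with the Mogulskii upper bound, and lets $L\to\infty$; for open $O\ni g$ with $J(g)<\infty$ one picks $\phi$ with $\Phi(\phi)=g$ and $I(\phi)<J(g)+\varepsilon$, uses continuity of $\Phi$ on a large sublevel set to produce an open neighbourhood of $\phi$ mapped into $O$, and invokes the Mogulskii lower bound. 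This yields the LDP for $\{\mu_{n}\}$ with good rate function $g\mapsto\inf\{I(\phi):\Phi(\phi)=g\}$; and since $\Phi(\phi)=g$ holds precisely when $\gamma_{\phi}$ is a horizontal path from $e$ to $g$ with $c_{\gamma_{\phi}}=\dot\phi$, and every such path is of this form, this infimum equals $J(g)$ as in \eqref{e.1.1}.

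I expect the exponential tightness of $\{\mu_{n}\}$ on $G$ --- equivalently, the exponential-scale control of the higher-degree coordinates of $D_{1/n}S_{n}$ --- to be the main obstacle, and the place where the step-$2$/sub-Gaussian versus step-$\ge3$/bounded dichotomy is forced, since the bracket estimates for stratum $j$ worsen as $j$ increases. A secondary technical burden is the inductive verification, uniformly over all iterated-integral orders $k\le r$, that $\Phi$ is continuous on the sublevel sets of $I$.
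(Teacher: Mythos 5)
Your route is architecturally different from the paper's: you push the full Mogulskii path-space LDP for the $1/n$-scale polygonal walk $Y_n$ through the horizontal development map $\Phi$, whereas the paper proves a finite-dimensional Cram\'er-type LDP on $\h^m$ (Proposition \ref{p.Hldp}), contracts through the genuinely continuous maps $\Psi_m$ of \eqref{e.Psim}, and then shows $\Psi_m(Y_n^m)$ are exponentially good approximations of $D_{1/n}S_n$ as $m\to\infty$ (Propositions \ref{p.exp-approx} and \ref{p.exp-approx-higher}, fed into Theorem \ref{t.exp-approx}). Your identification $\Phi(Y_n)=D_{1/n}S_n$, the applicability of Mogulskii under finiteness of $\Lambda$, and the continuity of $\Phi$ on sublevel sets of $I$ via Dunford--Pettis and induction on iterated integrals are all sound. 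The problem is the contraction step.

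Concretely: ``continuity of $\Phi$ on $\{I\leqslant L\}$ plus exponential tightness of $\{\mu_n\}$ on $G$'' is not a valid substitute for the contraction principle, and your proposed splitting exposes the gap. Splitting $\p(\Phi(Y_n)\in F)$ over $\{I(Y_n)\leqslant L\}$ and its complement requires $\lim_{L\to\infty}\limsup_n\frac1n\log\p\bigl(\frac1n\sum_{i=1}^n\Lambda^{\ast}(X_i)>L\bigr)=-\infty$, i.e.\ exponential integrability of $\Lambda^{\ast}(X_1)$; this is neither assumed nor implied by Mogulskii (the set $\{I>L\}$ is open, so the LDP upper bound says nothing about it) nor by exponential tightness of the image measures $\mu_n$ on $G$, which is a statement about compacts in $G$, not about the energy of the random path. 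If you instead split over the compacts $K_L\subset C([0,1];\h)$ furnished by exponential tightness of $\{Y_n\}$, then $\Phi^{-1}(F)\cap K_L$ need not be closed, precisely because $\Phi$ fails to be continuous on arbitrary compacts (oscillatory paths converging uniformly to a constant with nonvanishing limiting L\'evy area), and passing to the closure loses control of the infimum of $I$. The same missing super-exponential estimate reappears in your lower bound when you intersect the neighbourhood of $\phi$ with a sublevel set. The standard repair is exactly the extended contraction principle via exponentially good approximations (\cite[Theorem 4.2.23]{DemboZeitouniBook2010}): replace $\Phi$ by the continuous maps $\Phi\circ\pi_m$, with $\pi_m$ the piecewise-linear coarsening at scale $1/m$ --- which is the paper's $\Psi_m(Y_n^m)$ in disguise --- and then the unavoidable probabilistic core is showing that $\rho_{cc}(\Phi(\pi_mY_n),\Phi(Y_n))$ is super-exponentially small as $m\to\infty$. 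Your sketch for this step (dominating the degree-$j$ BCDH terms by $\max_k\lvert\sum_{i\leqslant k}X_i\rvert$ and $\sum_i\lvert X_i\rvert$) would at best yield exponential tightness of $\mu_n$; it does not produce the required decay in $m$, which is why the paper instead uses Hanson--Wright-type sub-Gaussian chaos bounds in step $2$ and a count of the surviving bracket terms together with boundedness in higher step.
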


\begin{remark}
As usual, there exists a modification of the above statement to accommodate non-centered distributions. But for simplicity we will keep the mean $0$ assumption.
\end{remark}
As is usual for LDPs, Theorem \ref{t.main} characterizes the large deviations rate function as the solution of a variational problem. Since the variational problem in \eqref{e.1.1} is defined on the path space, one can see the connection between the geometry of the Carnot group and the LDPs of the random walks. Keeping this connection in mind, it is interesting to determine whether the variational problem admits explicit solutions that express the underlying metric structure more clearly.

As a particular case, we are able to determine an explicit solution to \eqref{e.1.1} in the important case of Gaussian random walks. When $\{X_n\}_{n=1}^\infty$ are i.i.d.~normally distributed random vectors in $\mathcal{H}$, the rate function \eqref{e.1.1} has the following explicit expression
\begin{align}\label{eq-rate-normal}
 J_{\mathcal{N}}(g) &= \inf\left\{\frac{1}{2}\int_0^1 |\gamma^{\prime}(t)|_{\gamma(t)}^2\,dt: \gamma \text{ horizontal, } \gamma(0)=e, \gamma(1)=g\right\} 
\end{align}
which gives the exact minimum energy to reach $g$ from $e$ at time $1$.
In particular, the rate function in \eqref{eq-rate-normal} can be described in terms of the natural geometry on the group $G$. As we recall in Section~\ref{s.carnot},  H\"ormander's condition implies that any two points in $G$ can be connected by a horizontal path by Chow--Rashevskii's theorem. One may define the (finite) Carnot--Carath\'{e}odory distance $\rho_{cc}(x, y)$ between two points $x, y\in G$ to be the length of the shortest horizontal path connecting $x$ and $y$ (see Section \ref{sss.Carnotmetric} for details). With this definition we can state our results for Gaussian random walks. 
\begin{corollary}\label{c.3.13}
Let $G$ be a homogeneous Carnot group and suppose $\{X_n\}$ are independent $\mathcal{N}(0, \mathrm{Id}_{\mathcal{H}})$ random variables on $\h$. Then Theorem~\ref{t.main} holds for the associated random walk with the rate function
	\begin{align*}
	J_{\mathcal{N}}(x) =\frac12\rho^2_{cc}(e,x),\quad x\in G.
	\end{align*}
\end{corollary}

Since Carnot groups can be identified with copies of $\mathbb{R}^N$ equipped with non-commutative group operations, Corollary~\ref{c.3.13} can be viewed as a broad generalization of the standard LDP for Gaussian random walks in Euclidean spaces, where the rate function is given by $\frac{1}{2}\|x\|^2$. More generally, the rate function in \eqref{eq-rate-normal} is in line with that for LDPs on path space, like the classical one for Brownian motion (that is, Schilder's Theorem) and other diffusions on $\R^n$. Such path space LDPs for continuous-time processes are sometimes accessible from Cram\'{e}r-type results via finite-dimensional distributions, and it's natural to ask if the results of the present paper may be used to prove path space LDPs for the associated hypoelliptic diffusions (\'{a} la Schilder) or polygonal and piecewise-linear paths (\'{a} la Mogulskii). While LDPs are known for some hypoelliptic diffusions (see for example \cite{Azencott1980}) proving path space LDPs from our starting point typically requires additional assumptions, like ellipticity of the generator, or Cameron-Martin-Maruyama-type quasi-invariance, or Girsanov-type results. These conditions are generally not available in our present setting, except  for a limited class of sub-Riemannian manifolds, see \cite{BaudoinFengGordina2019}. 
We also mention that Schilder-type LDPs are known from \cite{Azencott1980} for a large class of hypoelliptic diffusions  with the rate function being the classical one for Brownian motion on $\mathbb{R}^n$.

At this point let us comment a bit about the proof of Theorem \ref{t.main}, and hence also Corollary \ref{c.3.13}. As mentioned above, the case of hypoelliptic diffusions poses a challenge in general, since the setting lacks the appropriate conditions required for most classical LDPs. Thus, our approach will need a new construct, and in particular, be tailored to the sub-Riemannian geometry of group.  Indeed, our approach is different from previous proofs for LDPs: we treat the random walk as an intrinsic stochastic process on the Carnot group viewed as a geodesic metric space. As a result, and in contrast to previous works, we show that by taking horizontal (geodesic) paths connecting $S_n$ and $S_{n+1}$ instead of a piecewise linearization of the continuous-time processes, we recover \emph{geometrically natural} rate functions. This procedure elucidates why the LDPs in this paper relate the random walks to the geometry of the underlying space, as in Theorem \ref{t.main}.

We now compare our results to two particular papers on LDPs for random walks in Lie groups. First consider  \cite{Versendaal2019b}, in which the author studies Cram\'{e}r-type LDPs on Lie groups equipped with a Riemannian structure. While the main results in our paper and in \cite{Versendaal2019b} might appear similar, the LDPs are fundamentally different both in the way the random walks are constructed and the geometric structures of the underlying spaces. In addition, we are able to prove LDPs for a more general class of samples $\left\{ X_n \right\}_{n=1}^{\infty}$ than the bounded distributions considered in \cite{Versendaal2019b}. This allows us to include the very natural case of standard normal sampling, the one and only case that leads to the LDP with the rate function in the energy form given by \eqref{eq-rate-normal}. One of the tools we rely on are concentration inequalities for polynomials of Gaussian random vectors.

Now consider \cite{BaldiCaramellino1999}, in which the authors study LDPs on nilpotent Lie groups. While in the present paper we focus on random walks taking steps only in horizontal directions, we see that the LDPs proved here should be comparable to those for random walks taking steps in arbitrary directions, as considered in \cite{BaldiCaramellino1999}; see Remark \ref{r.comp}. Significantly, the approach of the present paper allows us to show that the infimum is achieved specifically on horizontal paths, clearly tying the rate function to the underlying natural geometry of the space, as we see for example by identifying the rate function for Gaussian samples as described earlier in the introduction.

In \cite{BaldiCaramellino1999} the authors use the fact that these groups can be identified with Euclidean spaces to describe the random walks as an end point process of solutions to stochastic differential equations. As mentioned earlier, LDPs for hypoelliptic diffusions in $\R^n$ that are solutions of SDEs are well-known due to work by Azencott \cite{Azencott1980}. This approach is a natural application of classical LDP results to spaces like nilpotent Lie groups identified with Euclidean space; however, it cannot be used for more general geometric structures. We also note that as a consequence of this approach the group $G$ is equipped with the Euclidean metric structure. There is a significant difference when one looks at the group equipped with a Riemannian or sub-Riemannian distance, as it should be the metric structure that determines the rate functions for the LDP, as we show in the case of homogeneous Carnot groups. Working instead in the tangent space as in the present paper offers the potential to generalize to other sub-Riemannian manifolds. 

Our paper is organized as follows. In Section~\ref{s.background} we provide the terminology needed to state and prove Theorem~\ref{t.main} precisely, and in Section~\ref{s.LDP} we prove Theorem~\ref{t.main}. Note that it is only the arguments of Section~\ref{ss.approx}, where we show we have exponentially good approximations of the walk, that require the additional assumptions (sub-Gaussian or Gaussian or bounded) on the distribution.

\section{Background and setup}
\label{s.background}

\subsection{Large deviations principles on metric spaces}

We first recall some basic definitions for large deviations principles. These can be found for example in \cite[Section 1.2]{DemboZeitouniBook2010}. Suppose $\mathcal{M}$ is a Hausdorff topological space.

\begin{definition} A function $I: \mathcal{M} \longrightarrow [0, \infty]$ is called a \emph{rate function} if $I$ is not  identically $\infty$ and if $I$ is lower semi-continuous. That is, the set $\left\{x \in \mathcal{M}: I\left( x \right) \leqslant a \right\}$ is closed for every $a \geqslant 0$. $I$ is called a \emph{good rate function} if in addition $\left\{x \in \mathcal{M}: I\left( x \right) \leqslant a \right\}$ is compact for every $a \geqslant 0$.
\end{definition}
In our setting $\mathcal{M}=\left(\mathcal{M}, \rho \right)$ is a metric space, and therefore one can verify lower semi-continuity on sequences. I.e., $I$ is lower semi-continuous if and only if
\[
\liminf_{x_{n} \to x} I\left( x_{n} \right) \geqslant I\left( x \right)
\]
for all $x \in \mathcal{M}$. This means that a good rate function on $\mathcal{M}$ is a rate function that achieves its infimum over closed sets. We denote by $\mathcal{B}$ the (complete)  Borel $\sigma$-algebra over the metric space $\mathcal{M}$.

\begin{definition} A sequence of probability measures $\left\{ \mu_{n} \right\}_{n=1}^{\infty}$ on $\left( \mathcal{M}, \mathcal{B} \right)$  satisfies the \emph{large deviation principle} (LDP) with the rate function $I:\mathcal{M} \longrightarrow [0, \infty]$ if the function on $\mathcal{B}$ defined as $\frac{1}{n} \log \mu_{n}\left(  B \right)$, $B \in \mathcal{B}$,  converges weakly to the function $-\inf_{x \in B} I\left( x \right)$. Equivalently, for any open set $O\subset \mathcal{M}$ and any closed set $F \subset \mathcal{M}$
\begin{align}
& \liminf_{n \to \infty}\frac{1}{n} \log \mu_{n}\left(  O \right) \geqslant - \inf_{x\in O} I(x),
\notag
\\
& \limsup_{n \to \infty}\frac{1}{n} \log \mu_{n}\left( F \right) \leqslant - \inf_{x\in F} I(x).\label{e.full}
\end{align}
We say that $\left\{ \mu_{n} \right\}_{n=1}^{\infty}$ satisfies the \emph{weak LDP} if \eqref{e.full} holds for all compact sets $F\subset\mathcal{M}$.

Similarly, for a sequence of $\mathcal{M}$-valued random variables $\{Z_n\}_{n=1}^\infty$ defined on probability spaces $(\Omega,\mathcal{B}_n,P_n)$, we say $\{Z_n\}_{n=1}^\infty$ satisfies \emph{the (weak) LDP} with the rate function $I$ if the sequence of push-forward measures $\{P_n\circ Z_n^{-1}\}_{n=1}^\infty$ satisfies the (weak) LDP with the rate function $I$.
\end{definition}
We will also need the following standard fact known as the contraction principle (see for example \cite[Theorem 4.2.1]{DemboZeitouniBook2010}).
\begin{theorem}[Contraction principle]\label{thm-contract}
Suppose $\mathcal{M}$ and $\mathcal{N}$ are Hausdorff topological spaces and $f:\mathcal{M}\to\mathcal{N}$ is a continuous map. Let $I:\mathcal{M}\to[0,\infty]$ be a good rate function.
\begin{itemize}
\item[(i)] For any $y\in \mathcal{N}$, define
\[
J(y):=\inf\{I(x): x\in\mathcal{M} \text{ and } y=f(x) \},
\]
then $J:\mathcal{N}\to[0,\infty]$ is a good rate function on $\mathcal{N}$, where as usual the infimum over empty set is taken as $\infty$.
\item[(ii)] If $I$ controls the LDP associated with a family of probability measures $\{\mu_n\}_{n\ge1}$ on $\mathcal{M}$, then $J$ controls the LDP associated with the push-forwards $\{\mu_n\circ f^{-1}\}_{n\ge1}$ on $\mathcal{N}$.
\end{itemize}
\end{theorem}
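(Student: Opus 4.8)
The plan is to follow the classical argument for the contraction principle (cf.\ \cite[Theorem 4.2.1]{DemboZeitouniBook2010}): first prove part (i) by identifying the sublevel sets of $J$, and then deduce part (ii) directly from the LDP bounds for $\{\mu_n\}$ together with the continuity of $f$.

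For part (i), write $\Psi_I(a) := \{x \in \mathcal{M}: I(x) \leqslant a\}$ for the sublevel sets of $I$, which are compact since $I$ is a good rate function. The key claim is that for every $a \geqslant 0$
\[
\{y \in \mathcal{N}: J(y) \leqslant a\} = f\big(\Psi_I(a)\big).
\]
The inclusion ``$\supseteq$'' is immediate from the definition of $J$. For ``$\subseteq$'', suppose $J(y) \leqslant a$. Since $\mathcal{N}$ is Hausdorff and $f$ is continuous, $f^{-1}(\{y\})$ is closed in $\mathcal{M}$, so $K := f^{-1}(\{y\}) \cap \Psi_I(a+1)$ is compact; it is nonempty, as it contains points $x$ with $f(x) = y$ and $I(x)$ arbitrarily close to $J(y) \leqslant a$. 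Choosing a net $x_\alpha \in K$ with $I(x_\alpha) \to J(y)$ and passing to a convergent subnet $x_\alpha \to x^\ast \in K$, lower semicontinuity of $I$ gives $I(x^\ast) \leqslant \liminf_\alpha I(x_\alpha) = J(y) \leqslant a$ while $f(x^\ast) = y$; hence $y \in f(\Psi_I(a))$. Thus each sublevel set of $J$ is the continuous image of a compact set, hence compact, and in particular closed in the Hausdorff space $\mathcal{N}$, so $J$ is lower semicontinuous with compact sublevel sets. Finally $J$ is not identically $\infty$: picking $x_0$ with $I(x_0) < \infty$ gives $J(f(x_0)) \leqslant I(x_0) < \infty$. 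Therefore $J$ is a good rate function.

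For part (ii), I would use the elementary identity, valid for any $A \subseteq \mathcal{N}$,
\[
\inf_{x \in f^{-1}(A)} I(x) = \inf_{y \in A}\ \inf_{x \in f^{-1}(\{y\})} I(x) = \inf_{y \in A} J(y).
\]
Given a closed set $F \subseteq \mathcal{N}$, the set $f^{-1}(F)$ is closed in $\mathcal{M}$, so applying the LDP upper bound for $\{\mu_n\}$ and then this identity with $A = F$ yields
\[
\limsup_{n \to \infty} \tfrac1n \log (\mu_n \circ f^{-1})(F) = \limsup_{n \to \infty} \tfrac1n \log \mu_n\big(f^{-1}(F)\big) \leqslant -\inf_{f^{-1}(F)} I = -\inf_{F} J .
\]
Similarly, for an open set $O \subseteq \mathcal{N}$ the preimage $f^{-1}(O)$ is open, and the LDP lower bound for $\{\mu_n\}$ combined with the identity (with $A = O$) gives $\liminf_{n\to\infty} \tfrac1n \log (\mu_n \circ f^{-1})(O) \geqslant -\inf_O J$. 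Together with part (i), this is precisely the assertion that $\{\mu_n \circ f^{-1}\}$ satisfies the LDP with the good rate function $J$.

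The only genuinely delicate point is the attainment step in part (i) --- that the infimum defining $J(y)$ is achieved whenever it is finite --- and this is exactly where both hypotheses enter: continuity of $f$ (so that preimages of points are closed in the Hausdorff space $\mathcal{N}$) and compactness of the sublevel sets of $I$. Everything else is routine bookkeeping with the definitions of rate function and of the LDP.
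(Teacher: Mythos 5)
Your proof is correct and is essentially the standard argument for the contraction principle given in \cite[Theorem 4.2.1]{DemboZeitouniBook2010}, which is precisely the reference the paper cites in lieu of a proof; the identification of the sublevel sets of $J$ with $f(\Psi_I(a))$ via compactness of $f^{-1}(\{y\})\cap\Psi_I(a+1)$ and lower semicontinuity of $I$, followed by the preimage identity for part (ii), is exactly the classical route. No gaps.
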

Also important in the theory of LDP are the notions of exponential equivalence and exponential approximation. For the following definition, see for example \cite[Definition 4.2.10]{DemboZeitouniBook2010}.
\begin{definition}\label{d.exp-equiv}
For $n\in \mathbb{Z}^{+}$, let $\left( \Omega,\mathcal{B}_n, P_{n} \right)$ be probability spaces and $Z_n$ and $\widetilde{Z}_{n}$ be sequences of $\mathcal{M}$-valued random variables with joint laws $P_n$. Then $\{Z_{n}\}$ and $\{\widetilde{Z}_n\}$ are called \emph{exponentially equivalent} if for every $\delta > 0$ and $\Gamma_\delta:= \{(x,y):\rho(x,y)>\delta\}\subset\mathcal{M}\times\mathcal{M}$, the set $\{(\widetilde{Z}_{n}, Z_{n})\in\Gamma_\delta\}\in\mathcal{B}_n$ and
\[
\limsup_{n\to\infty}\frac{1}{n} \log P_{n}(\Gamma_\delta) = -\infty.
\]
\end{definition}

The following theorem records the known relationship between the LDPs of exponentially equivalent families of random variables; see for example \cite[Theorem 4.2.13]{DemboZeitouniBook2010}.

\begin{theorem}[Theorem~4.2.13 in \cite{DemboZeitouniBook2010}]\label{t.exp-equiv} Suppose that $\{Z_{n}\}$ is exponentially equivalent to $\{\widetilde{Z}_n\}$ and satisfies an LDP with a good rate function. Then $\{\widetilde{Z}_{n}\}$ also satisfies an LDP with the same rate function.
\end{theorem}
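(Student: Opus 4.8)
The plan is to transfer both halves of the large deviation bound from $\{Z_{n}\}$ to $\{\tilde{Z}_{n}\}$ by a direct set-theoretic comparison, using the superexponential smallness of $\Gamma_\delta=\{\rho(\tilde Z_n,Z_n)>\delta\}$ to absorb the discrepancy between the two families and then sending $\delta\downarrow 0$. I will repeatedly use the elementary facts that for nonnegative sequences $a_n,b_n$ one has $\limsup_n\frac1n\log(a_n+b_n)=\max\{\limsup_n\frac1n\log a_n,\ \limsup_n\frac1n\log b_n\}$, and that if $\frac1n\log b_n\to-\infty$ while $\liminf_n\frac1n\log a_n\ge -c$ with $c<\infty$, then $\liminf_n\frac1n\log(a_n-b_n)\ge -c$ (since then $b_n/a_n\to 0$). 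Since $I$ is assumed to be a good rate function, it automatically remains a good rate function for $\{\tilde Z_n\}$, so only the two estimates \eqref{e.full} need to be established.

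\emph{Upper bound over closed sets.} Let $F\subset\mathcal{M}$ be closed and, for $\delta>0$, let $F^{\delta}:=\{x\in\mathcal{M}:\rho(x,F)\le\delta\}$, which is again closed. If $\tilde Z_n\in F$ and $\rho(\tilde Z_n,Z_n)\le\delta$ then $Z_n\in F^{\delta}$, whence
\[
P_n(\tilde Z_n\in F)\le P_n(Z_n\in F^{\delta})+P_n(\Gamma_\delta).
\]
Taking $\frac1n\log$, passing to the $\limsup$, and combining the LDP upper bound for $\{Z_n\}$ applied to the closed set $F^{\delta}$ with $\limsup_n\frac1n\log P_n(\Gamma_\delta)=-\infty$ from Definition~\ref{d.exp-equiv}, we get $\limsup_n\frac1n\log P_n(\tilde Z_n\in F)\le-\inf_{F^{\delta}}I$. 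It then remains to let $\delta\downarrow 0$ and show $\inf_{F^{\delta}}I\to\inf_F I$. This is the only place where \emph{goodness} of $I$ is genuinely used: picking $x_\delta\in F^{\delta}$ with $I(x_\delta)\le\inf_{F^{\delta}}I+\delta$, all such $x_\delta$ with $\delta$ small lie in a fixed compact sublevel set of $I$, so along a subsequence $x_\delta\to x$; since $\rho(x_\delta,F)\le\delta$ and $F$ is closed, $x\in F$, and lower semicontinuity gives $\inf_F I\le I(x)\le\lim_{\delta\downarrow 0}\inf_{F^{\delta}}I$, the reverse inequality being trivial.

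\emph{Lower bound over open sets.} Let $O\subset\mathcal{M}$ be open and fix $x\in O$; choose $\delta>0$ so that the open ball $B(x,2\delta)=\{y:\rho(x,y)<2\delta\}$ is contained in $O$. If $Z_n\in B(x,\delta)$ and $\rho(\tilde Z_n,Z_n)\le\delta$, then $\tilde Z_n\in B(x,2\delta)\subset O$, so
\[
P_n(\tilde Z_n\in O)\ge P_n(Z_n\in B(x,\delta))-P_n(\Gamma_\delta).
\]
Since $\frac1n\log P_n(\Gamma_\delta)\to-\infty$ while $\liminf_n\frac1n\log P_n(Z_n\in B(x,\delta))\ge-\inf_{B(x,\delta)}I\ge-I(x)$ by the LDP lower bound for $\{Z_n\}$, the subtraction fact above yields $\liminf_n\frac1n\log P_n(\tilde Z_n\in O)\ge-I(x)$. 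Taking the supremum over $x\in O$ gives $\liminf_n\frac1n\log P_n(\tilde Z_n\in O)\ge-\inf_O I$, completing the LDP for $\{\tilde Z_n\}$ with the good rate function $I$.

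\emph{Main obstacle.} The only non-formal point is the continuity statement $\lim_{\delta\downarrow 0}\inf_{F^{\delta}}I=\inf_F I$ used in the upper bound: without compactness of the sublevel sets of $I$ the infima over the shrinking closed neighborhoods of $F$ need not converge to $\inf_F I$, so goodness of $I$ is essential precisely here. The remaining content is bookkeeping with the two elementary $\tfrac1n\log$ inequalities, and the lower bound in fact uses only that $I$ is a rate function.
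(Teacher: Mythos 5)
Your proof is correct and is exactly the standard argument for this result (the paper itself gives no proof, citing \cite[Theorem 4.2.13]{DemboZeitouniBook2010}, whose proof proceeds by the same neighborhood/ball comparison and uses goodness of $I$ precisely where you do, in showing $\inf_{F^\delta}I\to\inf_F I$). The only cosmetic remark is that in that last step one should first dispose of the trivial case $\lim_{\delta\downarrow 0}\inf_{F^\delta}I=\infty$, since otherwise the points $x_\delta$ need not lie in a common compact sublevel set; when the limit is finite your compactness and lower semicontinuity argument goes through verbatim.
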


For the following see for example \cite[Definition 4.2.14]{DemboZeitouniBook2010}.
\begin{definition}\label{d.exp-approx}
	For $n, m \in \mathbb{Z}^{+}$, let $\left( \Omega,\mathcal{B}_n, P_{n,m} \right)$ be probability spaces, and $\widetilde{Z}_n$ and $Z_{n,m}$ be sequences of $\mathcal{M}$-valued random variables with joint laws $P_{n,m}$. Then $\{Z_{n,m}\}$ are called \emph{exponentially good approximations} of $\{\widetilde{Z}_n\}$ if for every $\delta > 0$ and $\Gamma_\delta:= \{(x,y):\rho(x,y)>\delta\}\subset\mathcal{M}\times\mathcal{M}$, the set $\{(\widetilde{Z}_{n}, Z_{n, m})\in\Gamma_\delta\}\in\mathcal{B}_n$ and
\[
\lim_{m\to\infty} \limsup_{n\to\infty}\frac{1}{n} \log P_{n,m}(\Gamma_\delta) = -\infty.
\]
\end{definition}

The following theorem records the known relationship between the LDPs of families of exponentially good approximations; see for example \cite[Theorem 4.2.16]{DemboZeitouniBook2010}.

\begin{theorem}[Theorem~4.2.16 in \cite{DemboZeitouniBook2010}]\label{t.exp-approx} Suppose that for every $m$, the family of random variables $\{Z_{n,m}\}_{n=1}^\infty$ satisfies the LDP with the rate function $I_m$ and that $\{Z_{n,m}\}$ are exponentially good approximations of $\{\widetilde{Z}_n\}$.

\begin{enumerate}[(i)]
		\item Then $\{\widetilde{Z}_n\}_{n=1}^\infty$ satisfies a weak LDP with the rate function
			\[ I(y) := \sup_{\delta>0}\liminf_{m\to\infty}\inf_{z\in B_{y,\delta}} I_m(z) \]
			where $B_{y,\delta}$ denotes the ball $\{z:\rho(y,z)<\delta\}$.
		\item If $I$ is a good rate function and for every closed set $F$
			\[ \inf_{y\in F} I(y) \leqslant \limsup_{m\to\infty}\inf_{y\in F} I_m(y) \]
			then the full LDP holds for $\{\widetilde{Z}_n\}_{n=1}^\infty$ with the rate function $I$.
	\end{enumerate}
\end{theorem}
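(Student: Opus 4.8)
\emph{Proof strategy.} The plan is to build the statement from three inputs — the full LDP for each family $\{Z_{n,m}\}_{n\ge1}$, the exponential approximation hypothesis, and (for the upper bounds) compactness — tied together by the elementary fact that $\limsup_n\frac1n\log\bigl(a_n^{(1)}+\cdots+a_n^{(N)}\bigr)=\max_i\limsup_n\frac1n\log a_n^{(i)}$ for nonnegative sequences, cf. \cite[Lemma 1.2.15]{DemboZeitouniBook2010}. For $\delta>0$ write $\Gamma_\delta^{(m)}:=\{\rho(\tilde Z_n,Z_{n,m})>\delta\}$, so the hypothesis reads $\lim_{m\to\infty}\limsup_{n\to\infty}\frac1n\log P_n(\Gamma_\delta^{(m)})=-\infty$. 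A preliminary observation is that $I$ is lower semicontinuous: if $y_k\to y$, then $B_{y_k,\delta}\subset B_{y,2\delta}$ for large $k$, so $\inf_{B_{y_k,\delta}}I_m\ge\inf_{B_{y,2\delta}}I_m$ for every $m$, hence $\liminf_k I(y_k)\ge\liminf_m\inf_{B_{y,2\delta}}I_m$ for every $\delta$, and taking the supremum over $\delta$ gives $\liminf_k I(y_k)\ge I(y)$. For the lower bound in (i), fix an open $O$ and $y\in O$ with $I(y)<\infty$, and pick $\delta$ with $B_{y,2\delta}\subset O$. From $\{Z_{n,m}\in B_{y,\delta}\}\subset\{\tilde Z_n\in O\}\cup\Gamma_\delta^{(m)}$ we get $P_n(\tilde Z_n\in O)\ge P_n(Z_{n,m}\in B_{y,\delta})-P_n(\Gamma_\delta^{(m)})$; choosing a subsequence $m_k$ along which $\inf_{B_{y,\delta}}I_{m_k}\to\liminf_m\inf_{B_{y,\delta}}I_m=:L<\infty$, the LDP lower bound gives $\liminf_n\frac1n\log P_n(Z_{n,m_k}\in B_{y,\delta})\ge-\inf_{B_{y,\delta}}I_{m_k}$, while the approximation hypothesis makes $\limsup_n\frac1n\log P_n(\Gamma_\delta^{(m_k)})$ strictly smaller than this once $k$ is large, so the subtracted term is eventually (in $n$) negligible and $\liminf_n\frac1n\log P_n(\tilde Z_n\in O)\ge-\inf_{B_{y,\delta}}I_{m_k}\to-L$. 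Letting $\delta\downarrow0$ (for which $L$ is monotone) and taking the infimum over $y\in O$ gives the lower bound.

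For the upper bound on a compact set $F$ in (i), fix $\alpha'<\inf_F I$. For each $y\in F$ we have $I(y)>\alpha'$, so by the definition of $I$ there is $\delta_y>0$ with $\liminf_m\inf_{\overline{B}_{y,2\delta_y}}I_m>\alpha'$; extract a finite subcover $F\subset\bigcup_{i=1}^N B_{y_i,\delta_{y_i}}$. Using $\{\tilde Z_n\in B_{y_i,\delta_{y_i}}\}\subset\{Z_{n,m}\in B_{y_i,2\delta_{y_i}}\}\cup\Gamma_{\delta_{y_i}}^{(m)}$, the finite-union estimate, and the closed-set LDP upper bound for $Z_{n,m}$ applied to $\overline{B}_{y_i,2\delta_{y_i}}$, we obtain for every $m$
\[
\ell:=\limsup_n\tfrac1n\log P_n(\tilde Z_n\in F)\le\max\Bigl(-\min_{1\le i\le N}\inf_{\overline{B}_{y_i,2\delta_{y_i}}}I_m,\ \max_{1\le i\le N}\limsup_n\tfrac1n\log P_n(\Gamma_{\delta_{y_i}}^{(m)})\Bigr).
\]
We may assume $\ell>-\infty$. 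As $m\to\infty$ the second entry tends to $-\infty$, so for large $m$ some index $i(m)$ satisfies $\inf_{\overline{B}_{y_{i(m)},2\delta_{y_{i(m)}}}}I_m\le-\ell$; by the pigeonhole principle a single index $i^\ast$ does so along a subsequence of $m$, hence $\liminf_m\inf_{\overline{B}_{y_{i^\ast},2\delta_{y_{i^\ast}}}}I_m\le-\ell$, and comparing with the choice of $\delta_{y_{i^\ast}}$ forces $\alpha'<-\ell$. Since $\alpha'<\inf_F I$ was arbitrary, $\ell\le-\inf_F I$. With the lower bound and lower semicontinuity, this is the weak LDP with rate function $I$.

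For (ii), $I$ is now a good rate function and $\inf_F I\le\limsup_m\inf_F I_m$ for every closed $F$; only the closed-set upper bound remains. For $\delta>0$ the set $F^\delta:=\{x:\rho(x,F)\le\delta\}$ is closed, and $\{\tilde Z_n\in F\}\subset\{Z_{n,m}\in F^\delta\}\cup\Gamma_\delta^{(m)}$ together with the closed-set upper bound for $Z_{n,m}$ gives $\limsup_n\frac1n\log P_n(\tilde Z_n\in F)\le\max\bigl(-\inf_{F^\delta}I_m,\ \limsup_n\frac1n\log P_n(\Gamma_\delta^{(m)})\bigr)$; letting $m\to\infty$ and applying the hypothesis to $F^\delta$ yields $\limsup_n\frac1n\log P_n(\tilde Z_n\in F)\le-\inf_{F^\delta}I$ for all $\delta>0$. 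Finally, goodness of $I$ gives $\inf_{F^\delta}I\uparrow\inf_F I$ as $\delta\downarrow0$: pick $x_\delta\in F^\delta$ with $I(x_\delta)\le\inf_{F^\delta}I+\delta$ (assuming this infimum is finite, else the claim is trivial), note the $x_\delta$ lie in a fixed compact level set of $I$, pass to a convergent subsequence $x_{\delta_j}\to x$, observe that $\rho(x,F)=0$ so $x\in F$, and use lower semicontinuity to conclude $\inf_F I\le I(x)\le\lim_{\delta\downarrow0}\inf_{F^\delta}I$. This completes the full upper bound.

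The main obstacle is not any single step but the bookkeeping of the iterated limits — keeping $\limsup_n$, $\liminf_m$, $\sup_\delta$, and infima over shrinking sets in the correct order, and making sure the error events $\Gamma_\delta^{(m)}$ become negligible only after the limit in $n$ and before the limit in $m$. The two places where genuine analysis enters are the finite subcover together with the pigeonhole over finitely many indices in (i), and the use of goodness of $I$ in (ii); these are exactly what allow the argument to pass from single balls and compact sets to arbitrary closed sets.
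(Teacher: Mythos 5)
Your proof is correct and follows essentially the standard argument for this result (the paper itself does not prove it, citing \cite[Theorem 4.2.16]{DemboZeitouniBook2010}, whose proof your write-up reconstructs faithfully: inclusion of events up to the error event $\Gamma_\delta^{(m)}$, the $\limsup$-of-finite-sums lemma, finite subcover plus pigeonhole for the compact upper bound, and the $F^\delta$ blow-up with goodness of $I$ for the closed upper bound). The only cosmetic point is that in the compact upper bound the definition of $I$ involves open balls, so one should choose $\delta_y$ with $\liminf_m\inf_{B_{y,3\delta_y}}I_m>\alpha'$ and use $\overline{B}_{y,2\delta_y}\subset B_{y,3\delta_y}$; this is a trivial adjustment.
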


\subsection{Carnot groups}\label{s.carnot} In this paper we concentrate on a particular class of metric spaces, namely, homogeneous Carnot groups equipped with the Carnot-Carath\'{e}odory metric (by \cite[Proposition 2.2.17, Proposition 2.2.18]{BonfiglioliLanconelliUguzzoniBook}, the assumption about homogeneity is without loss of generality). We begin by recalling  basic facts about Carnot (stratified) groups that we require in the sequel. For the uninitiated reader we have tried to be as comprehensive as possible in our exposition. Any missing details as well as further elaboration can be found in a number of references, see for example \cite{BonfiglioliLanconelliUguzzoniBook, VaropoulosBook1992}.

\subsubsection{Carnot groups as Lie groups} We say that $G$ is a Carnot group of step $r$ if $G$  is a connected and simply connected Lie group whose Lie algebra $\mathfrak{g}$ is \emph{stratified}, that is, it can be written as
\[
\mathfrak{g}=V_{1}\oplus\cdots\oplus V_{r},
\]
where
\begin{align}\label{e.Stratification}
& \left[V_{1}, V_{i-1}\right]=V_{i}, \hskip0.1in 2 \leqslant i \leqslant r,
\notag
\\
& [ V_{1}, V_{r} ]=\left\{ 0 \right\}.
\end{align}
To exclude trivial cases we assume that the dimension of $\mathfrak{g}$ is at least $3$. In addition we will use a stratification such that the center of $\mathfrak{g}$ is contained in  $V_{r}$. In particular, Carnot groups are nilpotent. For $\mathcal{X}\in\mathfrak{g}$ we will write
\begin{equation*}
 \mathcal{X} = \mathcal{X}^{(1)}+\cdots+\mathcal{X}^{(r)} \in V_1\oplus\cdots\oplus V_r. 
 \end{equation*}

\begin{notation}\label{n.startification} By $\mathcal{H}:=V_{1}$ we denote the space of \emph{horizontal} vectors that generate the rest of the Lie algebra with $V_{2}=[\h,\h], \ldots, V_r = \h^{(r)}$.
\end{notation}
As usual, we let
\begin{align*}
\exp&: \mathfrak{g} \longrightarrow G,
\\
\log&: G \longrightarrow \mathfrak{g}
\end{align*}
denote the exponential and logarithmic maps, which  are global diffeomorphisms for connected nilpotent groups, see for example \cite[Theorem 1.2.1]{CorwinGreenleafBook}. Also, for $\X\in\mathfrak{g}$, we let $\operatorname{ad}_\X:\mathfrak{g}\to\mathfrak{g}$ denote the \emph{adjoint map} defined by $\operatorname{ad}_\X\Y:=[\X,\Y], \Y \in \mathfrak{g}$.

\subsubsection{Identifying $G$ with a linear space}
Since $\exp$ and $\log$ are global diffeomorphism between $G$ and $\mathfrak{g}$, we obtain a natural way to identify $G$ with a linear space, underlying its Lie algebra and equipped with some non-trivial group law. We now explain how this identification works. First, by identifying $\mathfrak{g}$ with $\mathbb{R}^{N}$ we can obtain the following notion of stratified coordinates.

\begin{definition} A set $\left\{ \X_{1}, \ldots , \X_{N} \right\}\subset\mathfrak{g}$  is a \emph{basis for $\mathfrak{g}$ adapted to the stratification} if the subset $\left\{ \X_{ d_0+d_1+\cdots+d_{i-1}+j} \right\}_{j=1}^{d_i}$ is a basis of $V_{i}$ for each $i\in [r]$,  where we adopt the standard notation $[r] := \{1,\ldots, r\}$ for $r \in \N$.
\end{definition}

We now recall the Baker-Campbell-Dynkin-Hausdorff formula, expressing the group product in terms of the Lie algebra. Since $\mathfrak{g}$ is nilpotent, the formula takes a particular appealing form and allows to present the group multiplication by polynomials. For the following version, see for example \cite[p.~585, Equation(4.12)]{BonfiglioliLanconelliUguzzoniBook} or \cite[p.~11]{CorwinGreenleafBook}.
\begin{notation}\label{n.BCDH}
For any $\X, \Y \in \mathfrak{g}$, the \emph{Baker-Campbell-Dynkin-Hausdorff formula} is given by

\begin{equation}\label{e.BCDH}\begin{split}
BCDH(\X,\Y)&:= \log(e^\X e^\Y)
\\
& = \X+\Y+\sum_{k=1}^{r-1} \sum_{(n,m)\in\mathcal{I}_{k}}
		a_{n, m}^k\operatorname{ad}_\X^{n_1} \operatorname{ad}_\Y^{m_1} \cdots
		\operatorname{ad}_\X^{n_{k}} \operatorname{ad}_\Y^{m_{k}} \X,
\end{split}\end{equation}
where
\begin{equation}
\label{e.ak}
a_{n,m}^k := \frac{(-1)^k}{(k+1)m!n!(|n|+1)},
\end{equation}
$\mathcal{I}_{k} := \{(n,m)\in\mathbb{Z}_+^k\times\mathbb{Z}_+^k :
n_i+m_i>0 \text{ for all } i \in [k] \}$, and for each multi-index
$n\in\mathbb{Z}_+^k$,
\[ n!= n_1!\cdots n_{k}! \quad \text{ and } \quad |n|=n_1+\cdots+n_{k}. \]
\end{notation}

Since $\mathfrak{g}$ is nilpotent of step $r$ we have
\[
\operatorname{ad}_\X^{n_1} \operatorname{ad}_\Y^{m_1} \cdots
		\operatorname{ad}_\X^{n_{k}} \operatorname{ad}_\Y^{m_{k}} \X = 0 \quad
\text{if } |n|+|m|\geqslant r
\]
for $\X,\Y\in\mathfrak{g}$.  

The Baker-Campbell-Dynkin-Hausdorff formula suggests it could be beneficial to lift coordinates from the Lie algebra $\mathfrak{g}$ to the group $G$.

\begin{definition}\label{d.ExpCoord} A system of \emph{exponential coordinates (of the first kind)}, relative to a basis $\left\{ \X_{1}, \dots, \X_{N} \right\}$ of $\mathfrak{g}$ adapted to the stratification, is a map from $\mathbb{R}^{N}$ to $G$ defined by
\[
x \longmapsto \exp\left( \sum_{i=1}^{N} x_{i}\X_{i} \right), \text{ where } x=\left( x_{1}, \dots, x_{N} \right) \in \mathbb{R}^{N}.
\]
With the exponential coordinates we can now equip $\mathbb{R}^{N}$ with a group operation pulled back from $G$ by
\begin{align*}
z&:=x \star y,
\\
	\sum_{i=1}^{N} z_{i}\X_{i}&=\operatorname{BCDH}\left( \sum_{i=1}^{N} x_{i}\X_{i}, \sum_{i=1}^{N} y_{j}\X_{j} \right).
\end{align*}
\end{definition}
In particular, in this identification $x^{-1}=-x$. Note that $\mathbb{R}^{N}$ with this group law is a Lie group whose Lie algebra is isomorphic to $\mathfrak{g}$. Both $G$ and $\left( \mathbb{R}^{N}, \star \right)$ are nilpotent, connected and simply connected, therefore the exponential coordinates give a diffeomorphism between $G$ and $\mathbb{R}^{N}$. Thus we identify both $G$ and $\mathfrak{g}$ with $\mathbb{R}^{N}$. For $x=\exp(\mathcal{X})\in G$ with $\mathcal{X}=\sum_{i=1}^Nx_i\mathcal{X}_i$, we will write
\begin{equation}\label{eq-step-cord}
x=(x^{(1)},\ldots,x^{(r)}) \in \R^{d_1}\times\cdots\times\R^{d_r},
\end{equation}
where $x^{(j)}=(x_{d_1+\cdots+d_{j-1}+1},\ldots,x_{d_1+\cdots+d_{j}})$. 
We shall henceforth always identify a homogeneous Carnot group $G$ with the $(\R^N, \star)$.

\textbf{Dilations:}  Analogous to scaling in Euclidean normed spaces, a stratified Lie algebra is equipped with a natural family of \emph{dilations} defined for any $a>0$ by
\[
	\delta_{a}\left( \X \right):=a^i \X,  \text{ for } \X \in V_{i}.
\]
For each $a>0$, $\delta_a$ is a Lie algebra isomorphism, and the family of all dilations $\{\delta_a\}_{a>0}$ forms a one-parameter group of Lie algebra isomorphisms. We use the identification between $G$ and $\mathfrak{g}$ to define similar automorphisms $D_{a}$ on $G$. The maps $D_{a}:=\exp \circ \, \delta_{a} \circ \log: G \longrightarrow G$ satisfy the following properties.
\begin{equation} \label{e.Dilations}
	\begin{split}
&  D_{a}\circ \exp=\exp \circ\, \delta_{a} \quad\text{ for any } a >0,
\\
&  D_{a_{1}}\circ D_{a_{2}}=D_{a_{1}a_{2}}, D_{1}=I \quad\text{ for any } a_{1}, a_{2}>0,
\\
&  D_{a}\left( g_{1} \right)D_{a}\left( g_{2} \right)=D_{a}\left( g_{1}g_{2} \right) \quad\text{ for any } a>0 \text{ and } g_{1}, g_{2} \in G,
\end{split}\end{equation}
That is, the group $G$ has a family of dilations which is adapted to its stratified structure. Actually, $D_{a}$ is the unique Lie group automorphism corresponding to $\delta_a$ in the sense that $dD_{a}=\delta_{a}$.
On a homogeneous Carnot group $\mathbb{R}^{N}$ the dilation $D_{a}$ can be described explicitly by
\begin{equation*}
D_{a}\left( x_{1}, \dots, x_{N} \right):=\left( a^{\sigma_{1}}x_{1}, \dots, a^{\sigma_{N}}x_{N}\right),
\end{equation*}
	where $\sigma_{j} \in\{1,\ldots,r\}$ is called the \emph{homogeneity} of $x_{j}$, with
\[
\sigma_{j}= i, \qquad\text{ for } d_0+d_1+\cdots+d_{i-1}< j \leqslant d_1+\cdots+d_{i},
\]
with $i=1,\ldots,r$ and recalling that $d_0=0$.
That is, $\sigma_{1}=\dots= \sigma_{d_{1}}=1, \sigma_{d_{1}+1}=\cdots=\sigma_{d_1+d_{2}}=2$, and so on. In other words, $\sigma_j = i$, if and only if $x_j  \in V_i$.
\\
\paragraph{\bf Group operations as polynomials:} A key observation for our analysis is that the group operation of a homogeneous Carnot group  $G=\left( \mathbb{R}^{N}, \star \right)$ can be expressed component-wise as homogeneous polynomials. For example, by \cite[Proposition 2.1]{FranchiSerapioniSerra_Cassano2003a}  we have
\begin{align}\label{e.3.3}
x \star y = x + y + Q\left( x, y \right) \text{ for } x, y \in \mathbb{R}^{N},
\end{align}
	where $Q =(Q^{(1)},\ldots,Q^{(r)}): \mathbb{R}^{N}\times \mathbb{R}^{N} \longrightarrow \mathbb{R}^{N}$ with $Q^{(i)}=\left( Q_{d_1+\cdots+d_{i-1}+1}, \dots, Q_{d_1+\cdots+d_i} \right):\mathbb{R}^{N}\times \mathbb{R}^{N} \longrightarrow \R^{d_i}$ and each $Q_{j}$ is a homogeneous polynomial of degree $\sigma_{j}$ with respect to the dilations $D_{a}$ of $G$,
\[
Q_{j}\left( D_{a} x,  D_{a} y \right)=a^{\sigma_{j}} Q_{j}\left( x,   y \right), \qquad \text{ for } x, y \in \R^N.
\]
Moreover, for any $x, y \in G$ we have
\begin{align*}
& Q_{1}\left( x, y \right)=\dots=Q_{d_{1}}\left( x, y \right)=0,
\\
& Q_{j}\left( x, 0 \right)=Q_{j}\left( 0, y \right)=0, Q_{j}\left( x, x \right)=Q_{j}\left( x, -x \right)=0, \quad \text{ for } d_{1}< j \leqslant N,
\end{align*}
and for $d_{1}+\dots+d_{i}< j \leqslant d_{1}+\dots+d_{i+1}$
\begin{align*}
	Q_{j}\left( x, y \right)=Q_{j}\left( (x^{(1)},\ldots,x^{(i)}), (y^{(1)},\ldots,y^{(i)})\right)
	=-Q_{j}\left( -y, -x \right).
\end{align*}
In particular, \eqref{e.3.3} gives a direct argument to see that group operations on homogeneous Carnot groups are differentiable. Note that \cite{FranchiSerapioniSerra_Cassano2003a} uses a slightly different notation $h_{i}:= d_{1}+\dots+d_{i}$.

In addition by \cite[Proposition 2.2.22 (4)]{BonfiglioliLanconelliUguzzoniBook} we have for $j=d_{1}+1, \dots, N$
\begin{equation}\label{e.SymplForm}
	Q_{j}\left( x, y \right)=\sum_{(k, \ell)\in I_j} \left( x_{k}y_{\ell}-x_{\ell}y_{k} \right) R_{j}^{k, \ell}\left( x, y \right),
\end{equation}
	where $I_j:=\{(k,\ell):k<\ell, \sigma_{k}+\sigma_{\ell} \leqslant \sigma_{j}\}$ (as described in  \cite[p.\,1951]{FranchiSerapioni2016}) and $R_{j}^{k, \ell}$ are homogeneous polynomials of degree $\sigma_{j}-\sigma_{k}-\sigma_{\ell}$ with respect to the group dilations.

	\begin{notation}[Symplectic form] For any $m$ and vectors $x=\left( x_{1}, \ldots , x_{m} \right), y=\left( y_{1}, \ldots , y_{m} \right)\in\mathbb{R}^m$ define
\[
\omega_{k, \ell}\left( x, y \right):=x_{k}y_{\ell}-x_{\ell}y_{k},
\]
for $1 \leqslant k, \ell \leqslant m$.
\end{notation}
Note that $\omega_{k, \ell}\left( x, y \right)=-\omega_{\ell, k}\left( x, y \right)=-\omega_{k, \ell}\left( -y, -x \right)$, and using this notation we can write \eqref{e.SymplForm} as
\begin{equation}\label{e.2.7}
	Q_{j}\left( x, y \right)=\sum_{(k, \ell)\in I_j} \omega_{k, \ell}\left( x, y \right) R_{j}^{k, \ell}\left( x, y \right),
\end{equation}
for $j=d_{1}+1, \dots, N$, where $R_{j}^{k, \ell}\left( x, y \right)= R_{j}^{k, \ell}\left( -y, -x\right)$.

\begin{example}[Step $2$]\label{ex.step2}
Suppose $G\cong \mathbb{R}^{d_{1}+d_{2}}$ is a homogeneous Carnot group of step $2$. That is, $\mathfrak{g}=\mathcal{H}\oplus \mathcal{V}$ with $\operatorname{dim}\mathcal{H}=d_1$, $\operatorname{dim}\mathcal{V}=d_{2}$, and homogeneity $\sigma_{j}=1$ if $1 \leqslant j \leqslant d_1$ and $\sigma_{j}=2$ if $d_1+1 \leqslant j \leqslant d_1+d_{2}$. Then
\begin{align*}
x \star y &= x + y + \left(
	\mathbf{0}_{d_1},  Q^{(2)}\left( x, y \right)\right) \\
	&= x + y + \left( 
	\mathbf{0}_{d_1},  Q_{d_1+1}\left( x, y \right), \dots,  Q_{d_1+d_{2}}\left( x, y \right)\right).
\end{align*}
Note that, for $j=d_1+1, \dots, d_1+d_{2}$, \eqref{e.SymplForm} becomes
\[
Q_{j}\left( x, y \right)=\sum_{1\leqslant k< \ell\leqslant d_1}
\omega_{k, \ell}\left( x, y \right) R_{j}^{k, \ell}\left( x, y \right),
\]
where the polynomials  $R_{j}^{k, \ell}$ are necessarily constant, since in this case $r=2$ and $\sigma_{k}=\sigma_{\ell}=1$, and therefore $\sigma_{j}-\sigma_{k}-\sigma_{\ell}=0$ for $j=d_1+1, \dots, d_1+d_{2}$.
Thus the components  $Q_{j}$ are skew-symmetric bilinear forms on $\mathbb{R}^{d_1}$, 
\begin{equation}\label{e.2.6-1}
Q_{j}\left( x, y \right)=\sum_{1\leqslant k< \ell\leqslant d_1}
	\alpha_{j}^{k, \ell}\omega_{k, \ell}\left( x^{(1)}, y^{(1)} \right)
\end{equation}
for some constants $\alpha_{j}^{k, \ell}$ and $d_1+1 \leqslant j \leqslant d_1+d_{2}$.

To make it more transparent, we can write $x_{1}=\left( h_{1}, v_{1} \right)$, $x_{2}=\left( h_{2}, v_{2} \right)$, where $h_{1}, h_{2} \in \mathbb{R}^{d_{1}}$ and $v_{1}, v_{2} \in \mathbb{R}^{d_{2}}$, so that
each $Q_{j}$ can be expressed in a matrix form as 
\begin{align*}
	Q_{j}\left( x_{1}, x_{2} \right)= Q_j((h_1,v_1),(h_{2},v_{2}))
	 =h_{1}^{T}A_{j} h_{2},
\end{align*}
with
\[ A_{j}:=\left( \alpha_{j}^{k, \ell} \right)_{k,\ell=1}^{d_{1}}, \quad \alpha_{j}^{k, \ell}=-\alpha_{j}^{\ell, k}.
	\]

We can use this representation of the group multiplication to write the product of $n$ elements $x_{1}, \ldots , x_{n} \in G$, where $x_{i}=\left( h_{i}, v_{i} \right)$ with $h_{i} \in \mathbb{R}^{d_{1}}$ and $v_{i}=\left( v_{i}^{d_{1}+1}, \ldots, v_{i}^{d_{1}+d_{2}}\right) \in \mathbb{R}^{d_{2}}$,  as follows.
\begin{align*}
	\prod_{i=1}^{n}& \star\, x_{i}
	:= x_1\star\cdots\star x_n \\
	&=\left( \sum_{i=1}^{n} h_{i},
		\sum_{i=1}^{n}v_{i}^{d_{1}+1}+\sum_{1\leqslant k< \ell\leqslant d_1} h_{k}^{T}A_{d_{1}+1} h_{\ell}, \ldots, \sum_{i=1}^{n} v_{i}^{d_{1}+d_{2}}+\sum_{1\leqslant k< \ell\leqslant d_1} h_{k}^{T}A_{d_{1}+d_{2}} h_{\ell} \right).
\end{align*}
	In particular, when $v_i=\mathbf{0}_{d_{2}}$ for all $i$
\begin{equation}\label{e.2.5}
	\prod_{i=1}^{n}\star \left( h_{i}, \mathbf{0}_{d_{2}} \right)=
\left( \sum_{i=1}^{n} h_{i},
\sum_{1\leqslant k< \ell\leqslant d_1} h_{k}^{T}A_{d_{1}+1} h_{\ell}, \ldots , \sum_{1\leqslant k< \ell\leqslant d_1} h_{k}^{T}A_{d_{1}+d_{2}} h_{\ell} \right),
\end{equation}
and in the case $d_{2}=1$ we have
\begin{equation*}
\prod_{i=1}^{n} \star \left( h_{i}, 0 \right)=
\left( \sum_{i=1}^{n} h_{i},
\sum_{1\leqslant k< \ell\leqslant d_1} h_{k}^{T}A h_{\ell}\right)
\end{equation*}
for a skew-symmetric matrix $A$.
\end{example}

\begin{example}[Heisenberg groups]\label{ex.Heisenberg}
The (Isotropic) Heisenberg group is an example of a step $2$ group with $d_{2}=1$, $d_{1}=2d$ and $A$ being a $2d \times 2d$ matrix with the blocks

\[
\left(
  \begin{array}{cc}
    0 & -1 \\
    1 & 0 \\
  \end{array}
\right)
\]
on the diagonal. Then for $\left( x_{i}, y_{i}, z_{i}\right) \in G \cong \mathbb{R}^{2n}\times \mathbb{R}, i \in[n]$, we have

\[
\prod_{i=1}^{n} \star \left( x_{i}, y_{i}, z_{i}\right)=\left( \sum_{i=1}^{n} x_{i}, \sum_{i=1}^{n} y_{i}, \sum_{i=1}^{n} z_{i}+\sum_{1\leqslant i< j\leqslant n} \left( x_{i}, y_{i} \right) A \left( x_{j}, y_{j} \right)^{T}\right).
\]
\end{example}

\begin{example}[Step $3$ Engel group] This is a group of step $3$ that can be modeled on $\mathbb{R}^{4}$, with $\mathcal{H}=\mathbb{R}^{2} \times \left\{ 0 \right\}$.  The multiplication is given by
\begin{align*}
& x \star y= x+y+\left( 0, 0, \frac{1}{2}\omega_{1,2}\left( x, y \right), \frac{1}{2}\omega_{1, 3}\left( x, y \right)+\frac{1}{12} \omega_{1, 2}\left( x, y\right)\left( x_{1}-y_{1} \right) \right),
\end{align*}
so that
	\begin{align*}
& \left( x_{1}, x_{2}, 0, 0 \right) \star \left( y_{1}, y_{2}, 0, 0 \right) = \left( x_{1}+y_{1}, x_{2}+y_{2}, \frac{1}{2}\omega_{1,2}\left( x, y \right), \frac{1}{12} \omega_{1, 2}\left( x, y\right)\left( x_{1}-y_{1} \right) \right)
\end{align*}
and
\begin{align*}
& \left( x_{1}, x_{2}, x_{3}, x_{4} \right) \star \left( -x_{1}, -x_{2}, 0, 0 \right)= \left( 0, 0, x_{3}, x_{4}+\frac{1}{2}x_{1}x_{3} \right).
\end{align*}
\end{example}

The following observation will be useful in the sequel and is a straightforward consequence of the Baker-Campbell-Hausdorff-Dynkin formula.

\begin{lemma}\label{l.Il} Suppose $G$ is a homogeneous Carnot group of step $r$ and $\X_1,\ldots,\X_{k}$ are any elements of $\h$. Then for $\ell=2,\ldots,r$ 	\[ (e^{\X_1}\star\cdots\star e^{\X_n})^{(\ell)}
		= \sum_{i=(i_1,\ldots,i_\ell)\in \mathcal{J}_\ell} c_i \operatorname{ad}_{\X_{i_1}}\cdots \operatorname{ad}_{\X_{i_{\ell-1}}}
			\X_{i_\ell} \]
	for some coefficients $|c_i|<1$, where $\mathcal{J}_\ell$ is some strict subset of $\{1,\ldots,n\}^\ell$ and thus $\#\mathcal{J}_\ell\leqslant n^\ell$.
\end{lemma}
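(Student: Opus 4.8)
The plan is to unwind everything through the Baker--Campbell--Dynkin--Hausdorff formula. In the exponential coordinates of Definition~\ref{d.ExpCoord} the point $e^{\X_j}$ is the coordinate vector of $\X_j\in V_1=\h$, and by the definition of $\star$ the coordinate vector of $e^{\X_1}\star\cdots\star e^{\X_n}$ is that of
\[
W:=BCDH(\X_1,BCDH(\X_2,\dots,BCDH(\X_{n-1},\X_n)\cdots))=\log(e^{\X_1}\cdots e^{\X_n})\in\mathfrak g,
\]
so that $(e^{\X_1}\star\cdots\star e^{\X_n})^{(\ell)}$ is precisely the $V_\ell$-component of $W$. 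First I would apply \eqref{e.BCDH} repeatedly, using that $\mathfrak g$ is nilpotent of step $r$ so that all iterated brackets of length $>r$ vanish; this writes $W$ as a \emph{finite} $\mathbb Q$-linear combination of iterated brackets of $\X_1,\dots,\X_n$ of lengths between $1$ and $r$, whose coefficients are built only from the rational constants $a^k_{n,m}$ of \eqref{e.ak}, hence are universal (independent of $G$ and of the $\X_j$'s).

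Next I would extract the $V_\ell$-component. The stratification \eqref{e.Stratification} gives $[V_i,V_j]\subseteq V_{i+j}$ (induction on $i$ via the Jacobi identity), so any bracket of length $p$ of elements of $\h$ lies in $V_p$; consequently the $V_\ell$-component of $W$ is exactly the sum of its length-$\ell$ bracket terms. Using the Jacobi identity repeatedly, each such length-$\ell$ bracket rewrites as a $\mathbb Q$-linear combination of right-nested brackets $\ad_{\X_{i_1}}\cdots\ad_{\X_{i_{\ell-1}}}\X_{i_\ell}$ indexed by tuples $i=(i_1,\dots,i_\ell)\in\{1,\dots,n\}^\ell$ (equivalently one may quote the Dynkin--Specht--Wever theorem, which produces such a presentation directly). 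Collecting terms,
\[
(e^{\X_1}\star\cdots\star e^{\X_n})^{(\ell)}=\sum_{i\in\mathcal J_\ell}c_i\,\ad_{\X_{i_1}}\cdots\ad_{\X_{i_{\ell-1}}}\X_{i_\ell},\qquad c_i\in\mathbb Q\ \text{universal}.
\]
Since the diagonal tuples $i=(j,j,\dots,j)$ contribute $\ad_{\X_j}^{\ell-1}\X_j=0$, one may take $\mathcal J_\ell:=\{i:c_i\neq0\}$, which omits the diagonal and is therefore a strict subset of $\{1,\dots,n\}^\ell$; in particular $\#\mathcal J_\ell\leqslant n^\ell$.

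The one quantitative point, which I expect to be the main obstacle, is the bound $|c_i|<1$. Since the $c_i$ are universal it suffices to estimate them in the free nilpotent Lie algebra, and there the cleanest tool is the Dynkin--Specht--Wever description: the degree-$\ell$ part of $\log(e^{\X_1}\cdots e^{\X_n})$ equals $\tfrac1\ell\sum_{|w|=\ell}\langle\log S,w\rangle\,[w]$, where $S=e^{\X_1}\cdots e^{\X_n}$ is viewed in the free associative algebra on $\X_1,\dots,\X_n$, $[w]$ is the right-nested bracket of the word $w$, and $\langle\log S,w\rangle$ is the coefficient of $w$ in $\log S$. Expanding $\log S=\sum_{s\geqslant1}\tfrac{(-1)^{s-1}}{s}(S-1)^s$ and writing $\langle\log S,w\rangle$ as a signed sum over factorizations of $w$ into non-decreasing ``blocks'' (each weighted by an inverse product of factorials, hence $\leqslant1$), an elementary computation---using the Beta-integral identity $\sum_j\tfrac1{j+\delta+1}\binom{M}{j}=\int_0^1 x^\delta(1+x)^M\,dx$ together with the cancellation coming from the alternating signs---gives $|\langle\log S,w\rangle|\leqslant1$ for every word of length $\geqslant1$, whence $|c_i|=\tfrac1\ell|\langle\log S,w\rangle|<1$ for $\ell\geqslant2$. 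Passing to this tensor-algebra picture is the point: a crude bound obtained by iterating \eqref{e.BCDH} and re-expanding via Jacobi allows the coefficients to grow, and one needs the cancellation visible in the Dynkin--Specht--Wever formula to keep $|c_i|$ strictly below $1$.
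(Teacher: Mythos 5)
Your proposal is correct, and it is considerably more complete than what the paper offers: the paper states Lemma \ref{l.Il} as ``a straightforward consequence of the Baker--Campbell--Hausdorff--Dynkin formula'' and supplies no proof at all (a sketch of an induction on $\ell$ for the cardinality bound exists only in commented-out source). Your route through the Dynkin--Specht--Wever projection is exactly the right way to handle the one genuinely non-obvious claim, $|c_i|<1$, which is later used quantitatively in Proposition \ref{p.exp-approx-higher}; as you observe, naively iterating \eqref{e.BCDH} and re-expanding by Jacobi gives no control on coefficient growth, whereas writing the degree-$\ell$ component as $\frac{1}{\ell}\sum_{|w|=\ell}\langle\log S,w\rangle[w]$ reduces everything to bounding word coefficients of $\log S$. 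Two small points to tighten. First, the Beta identity you quote has the wrong sign pattern for the sum that actually arises: for a word with $\delta$ descents and distinct letters one gets $\sum_{j=0}^{M}\frac{(-1)^{j}}{j+\delta+1}\binom{M}{j}=\int_0^1 x^{\delta}(1-x)^{M}\,dx=\frac{\delta!\,M!}{(\delta+M+1)!}$ with $M=\ell-1-\delta$, which already yields $|\langle\log S,w\rangle|\leqslant\frac{1}{\ell}$ and hence $|c_i|\leqslant\frac{1}{\ell^{2}}<1$; you should use $(1-x)^M$, not $(1+x)^M$. Second, the case of words with repeated letters (where equality gaps may or may not be cut and factorial weights enter) is only waved at; it does work out, but it deserves a line, e.g.\ by checking that the factorial weights only shrink each factorization's contribution. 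Also note that Dynkin--Specht--Wever naturally produces left-nested brackets, so a harmless reindexing (and sign $(-1)^{\ell-1}$) is needed to reach the right-nested form $\operatorname{ad}_{\X_{i_1}}\cdots\operatorname{ad}_{\X_{i_{\ell-1}}}\X_{i_\ell}$ in the statement; this does not affect the bound.
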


Finally, we can also express some differentials of the group law as polynomials over the Lie algebra. Since we concern ourselves with differentials, we recall that the Lie algebra $\mathfrak{g}$ is naturally identified with the tangent space $T_xG$, for any $x \in G$.
Thus, we will still continue to use $\X$ to denote elements of the Lie algebra which are being mapped to elements of the group $x=e^\X$, but we will also begin at this point to use more traditional vector notation $u,v$, etc.\,to denote elements of the Lie algebra and more generally in the tangent spaces of $G$.

We now introduce the multiplication operator and its differential.
\begin{notation}\label{n.2.11} For $x\in G$, we denote by $L_{x}: G \longrightarrow G$ the \emph{left multiplication}
\[
L_{x} y:=x \star y,  \text{ for } y \in G,
\]
and the corresponding pushforward (differential) $(L_x)_{\ast}:TG \to TG$ by
\begin{align*}
\left( L_{x} \right)_{\ast}: T_{y}G &\rightarrow T_{xy}G
\\
v &\mapsto (L_x)_{\ast}v.
\end{align*}
The \emph{Maurer-Cartan form} $\omega$ is the $\mathfrak{g}$-valued one-form on $G$ defined by
\begin{align*}
& \omega: T_{x}G \longrightarrow T_{e}G \cong \mathfrak{g},\quad v \in  T_{x}G,
\\
& \omega\left( v \right):=(L_{x^{-1}})_{\ast}v \in \mathfrak{g}.
\end{align*}
\end{notation}

The next statement describes the pushforward of left multiplication on elements of the Lie algebra. It is a corollary of \cite[Proposition 3.15]{Melcher2009a}, but we include a proof here for convenience.
\begin{prop}\label{p.tt} Let $\mathcal{X}\in \mathfrak{g}$, $x:=e^\X\in G$, and $v\in T_eG =  \mathfrak{g}$. Then
	\begin{equation*}
		(L_x)_{\ast}v = \sum_{n=0}^{r-1}  A_n \operatorname{ad}_\X^{n} v
	\end{equation*}
	where $A_0=1$ and for $n=1,\ldots,r-1$
	\[ A_n = -\sum_{k=1}^{r-1}\sum_{(n,m)} a_{n,m}^k \]
	where the second sum is over $(n,m)\in\mathcal{I}_{k}$ so that $m_1=\cdots=m_{k-1}=0$, $m_{k}=1$, and $|n|+|m|<r$ and $a_{n,m}^k$ are as in \eqref{e.ak}. Equivalently,
	there exist polynomials $C_{j}^{k,\ell}$ in $x$ so that
	\[(L_x)_{\ast}v = v + \left( \mathbf{0}_{d_1}, \sum_{(k,\ell)\in {I}_{d_1+1}} C_{d_1+1}^{k,\ell}(x)\omega_{k,\ell}(x,v),\ldots, \sum_{(k,\ell)\in {I}_N} C_N^{k,\ell}(x)\omega_{k,\ell}(x,v) \right) \]
	where again $I_j:=\{(k,\ell):k<\ell, \sigma_{k}+\sigma_{\ell} \leqslant \sigma_{j}\}$.
In particular, when $G$ is step 2, these polynomials are constants and
	\[(L_x)_{\ast}v = v + \left( \mathbf{0}_{d_1}, \sum_{1\leqslant k<\ell\leqslant d_1} \alpha_{d_1+1}^{k,\ell}\omega_{k,\ell}(x^{(1)},v^{(1)}),\ldots, \sum_{1\leqslant k<\ell\leqslant d_1} \alpha_N^{k,\ell}\omega_{k,\ell}(x^{(1)},v^{(1)}) \right). \]
\end{prop}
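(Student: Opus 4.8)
The plan is to derive both formulas by differentiating, at $t=0$, the curve $t\mapsto L_x(e^{\X}\star e^{tv})$, or more precisely $t\mapsto L_x(e^{tv})=e^{\X}\star e^{tv}$. In exponential coordinates of the first kind (Definition~\ref{d.ExpCoord}) this curve has coordinates $t\mapsto\operatorname{BCDH}(\X,tv)$, while $\gamma(t)=e^{tv}$ satisfies $\gamma(0)=e$ and $\gamma'(0)=v$, so that $(L_x)_\ast v=\frac{d}{dt}\big|_{t=0}\operatorname{BCDH}(\X,tv)$.

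For the Lie-algebraic formula: since $\mathfrak{g}$ is nilpotent of step $r$, the series \eqref{e.BCDH} is a finite sum and may be differentiated term by term. The term $\X$ drops out, the term $tv$ contributes $v$, and a generic summand $a_{n,m}^k\operatorname{ad}_{\X}^{n_1}\operatorname{ad}_{tv}^{m_1}\cdots\operatorname{ad}_{\X}^{n_k}\operatorname{ad}_{tv}^{m_k}\X$ carries the factor $t^{|m|}$, hence contributes to the derivative at $0$ only when $|m|=1$. Since $\operatorname{ad}_{\X}\X=0$, any summand with $m_k=0$ vanishes identically, so the surviving summands are precisely those with $m_k=1$ and $m_1=\cdots=m_{k-1}=0$; each of these contributes $a_{n,m}^k\operatorname{ad}_{\X}^{|n|}\operatorname{ad}_v\X=-a_{n,m}^k\operatorname{ad}_{\X}^{|n|+1}v$. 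Grouping the surviving terms by the value $|n|+1$, which by nilpotency (equivalently $|n|+|m|<r$) runs over $\{1,\dots,r-1\}$, gives $(L_x)_\ast v=\sum_{n=0}^{r-1}A_n\operatorname{ad}_{\X}^n v$ with $A_0=1$ and $A_n$ the indicated signed sum of the $a_{n,m}^k$.

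For the coordinate form I would instead differentiate the polynomial group law: by \eqref{e.3.3} the curve $L_x(e^{tv})$ has coordinates $x+tv+Q(x,tv)$, where $Q_1=\cdots=Q_{d_1}=0$, while for $j>d_1$ equation \eqref{e.2.7} together with the bilinearity $\omega_{k,\ell}(x,tv)=t\,\omega_{k,\ell}(x,v)$ gives $Q_j(x,tv)=t\sum_{(k,\ell)\in I_j}\omega_{k,\ell}(x,v)R_j^{k,\ell}(x,tv)$; differentiating at $0$ yields the $j$-th component $\sum_{(k,\ell)\in I_j}C_j^{k,\ell}(x)\,\omega_{k,\ell}(x,v)$ with $C_j^{k,\ell}(x):=R_j^{k,\ell}(x,0)$, a polynomial in $x$. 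In the step-$2$ case one reads off from Example~\ref{ex.step2} that $I_j=\{(k,\ell):1\leqslant k<\ell\leqslant d_1\}$ for $j>d_1$, that the $R_j^{k,\ell}$ are the constants $\alpha_j^{k,\ell}$ (by the degree count $\sigma_j-\sigma_k-\sigma_\ell=0$), and that $\omega_{k,\ell}(x,v)=\omega_{k,\ell}(x^{(1)},v^{(1)})$ since $k,\ell\leqslant d_1$, which is exactly the final display.

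These two expressions are just two descriptions of the same linear map $(L_x)_\ast\colon\mathfrak{g}\to T_xG\cong\R^N$, so no further reconciliation is required; as a consistency check one notes $\operatorname{ad}_{\X}v\in V_2\oplus\cdots\oplus V_r$, so each $\operatorname{ad}_{\X}^n v$ with $n\geqslant1$ perturbs only the components indexed by $j>d_1$, as in the coordinate form. I expect the one genuinely delicate point to be the combinatorial extraction of the degree-one-in-$t$ part of \eqref{e.BCDH}: discarding the $m_k=0$ summands via $\operatorname{ad}_{\X}\X=0$, isolating the ones with $m_k=1$ and $m_1=\cdots=m_{k-1}=0$, and collapsing the nested adjoints to $-\operatorname{ad}_{\X}^{|n|+1}v$. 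Everything else is routine bookkeeping with \eqref{e.3.3}, \eqref{e.2.7}, and \eqref{e.2.6-1}.
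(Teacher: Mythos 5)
Your proposal is correct and follows essentially the same route as the paper's proof: both formulas are obtained by differentiating $t\mapsto x\star e^{tv}$ at $t=0$, using the Baker--Campbell--Dynkin--Hausdorff expansion \eqref{e.BCDH} for the Lie-algebraic form (with exactly the same identification of the surviving summands $m_k=1$, $m_1=\cdots=m_{k-1}=0$) and the polynomial group law \eqref{e.3.3} together with \eqref{e.2.7} and \eqref{e.2.6-1} for the coordinate and step-2 forms. Your extra remark that $\operatorname{ad}_{\X}\X=0$ kills the $m_k=0$ summands is a correct and slightly more explicit justification of a step the paper leaves implicit.
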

\begin{proof}
Let $\gamma(t):=e^{tv}$, so that $\displaystyle(L_x)_{\ast}v =\frac{d}{dt}\bigg|_0 x\star \gamma(t)$.
	Using \eqref{e.BCDH} we may write
\[		
x\star \gamma(t)
=\left(\X+ tv + \sum_{k=1}^{r-1}\sum_{(n,m)\in\mathcal{I}_{k}}
			a_{n,m}^k \operatorname{ad}_\X^{n_1} \operatorname{ad}_{tv}^{m_1} \cdots
		\operatorname{ad}_\X^{n_{k}} \operatorname{ad}_{tv}^{m_{k}} \X\right). 
\]
Then the first expression follows from noting that, for each term in the sum,
\begin{align*}
\frac{d}{dt}\bigg|_0 \operatorname{ad}_\X^{n_1} & \operatorname{ad}_{tv}^{m_1} \cdots
		\operatorname{ad}_\X^{n_{k}} \operatorname{ad}_{tv}^{m_{k}} \X
\\
&=\left\{\begin{array}{cl} \operatorname{ad}_\X^{|n|}\operatorname{ad}_v \X & \text{if } m_{k}=1 \text{ and }
			m_1=\cdots=m_{k-1}=0\\
			0 & \text{otherwise }
		\end{array}\right. .
\end{align*}

	Alternatively, following \eqref{e.3.3} we may write
\begin{align*}
\left.\frac{d}{dt}\right|_{0}x \star \gamma\left( t \right)
	&=\left.\frac{d}{dt}\right|_{0} \left(x + \gamma\left( t \right) + Q\left( x, \gamma\left( t \right) \right)\right)
\\
& = v+\left.\frac{d}{dt}\right|_{0}Q\left( x,\gamma\left( t \right) \right),
\end{align*}
and using \eqref{e.2.7} note that the only non-zero terms in the second summand are
\[
\left.\frac{d}{dt}\right|_{0}Q_{j}\left( x, \gamma\left( t \right) \right)
	=\sum_{(k, \ell)\in I_j}\omega_{k,\ell}\left( x, v \right)  R_{j}^{k, \ell}\left( x, 0 \right),
\]
where $R_{j}^{k, \ell}\left( x, 0 \right)$ is a polynomial in $x$ depending only on the structure of $G$. The step 2 case follows from \eqref{e.2.6-1}.
\end{proof}

\subsubsection{Carnot groups as metric spaces}\label{sss.Carnotmetric}
As discussed in the introduction, the inner product $\langle\cdot,\cdot\rangle_\h$ induces a natural sub-Riemannian structure on $G$. We identify the horizontal space $\mathcal{H} \subset \mathfrak{g}$ with $\mathcal{D}_{e}\subset T_{e}G$, and then define $\mathcal{D}_{x}:=(L_x)_{\ast}\mathcal{D}_{e}$ for any $x \in G$. Vectors in $\mathcal{D}$ are called \emph{horizontal}. Recall that we introduced the $\mathfrak{g}$-valued Maurer-Cartan form in Notation~\ref{n.2.11}.

\begin{definition}\label{d.HorizontalPath}
A path $\gamma:[0,1]\to G$ is said to be \emph{horizontal} if $\gamma$ is absolutely continuous and $\gamma^{\prime}(t)\in D_{\gamma(t)}$ for a.e.~$t$, that is, the tangent vector to $\gamma\left(t\right)$ at a.e. point of $\gamma \left(t\right)$ is horizontal. Equivalently, $\gamma$ is horizontal if the (left) \emph{Maurer-Cartan form}

\begin{equation*}
c_{\gamma}\left( t \right):=( L_{\gamma\left( t \right)^{-1}})_{\ast} \gamma^{\prime} \left( t \right)
\end{equation*}
is in $\mathcal{H}$ for a.e.~$t$.
\end{definition}

The metric on $\mathcal{D}$ is defined by
\begin{align*}
	\langle u, v\rangle_x &:= \langle (L_{x^{-1}})_{\ast}u,(L_{x^{-1}})_{\ast}v\rangle_{\h} \qquad \text{for all } u,v\in\mathcal{D}_x.
\end{align*}
The length of a horizontal path $\gamma$ may  be computed as
\begin{equation}\label{eq-length}
 \ell(\gamma) := \int_0^{1} \sqrt{\langle \gamma^{\prime}(t), \gamma^{\prime}(t)\rangle_{\gamma(t)}}\,dt
	= \int_0^1 \sqrt{\langle c_\gamma(t), c_\gamma(t)\rangle_{\h}}\,dt
	:= \int_0^1  | c_\gamma \left( t \right)|_{\mathcal{H}}\,dt.
\end{equation}

\begin{example} For a Carnot group of step $2$ we can describe horizontal paths as follows. Suppose $\gamma(t)=(A(t), a(t))$ is an absolutely continuous path in $G$ with $A\left( t \right) \in \mathbb{R}^{d_{1}} \times \left\{ 0 \right\}$ and  $a\left( t \right) \in \left\{ 0 \right\} \times \mathbb{R}^{d_{2}}$. By Proposition~\ref{p.tt}

\begin{align*}
& (L_{\gamma(t)^{-1}})_{\ast}\gamma^{\prime}(t) = \gamma^{\prime}(t)
\\
& + \left(\mathbf{0}_{d_1},
			-\sum_{1\leqslant k< \ell \leqslant d_1}
	\alpha_{d_1+1}^{k, \ell}\omega_{k, \ell}\left( \gamma\left( t \right), \gamma^{\prime}\left( t \right) \right),\ldots,
		-\sum_{1\leqslant k< \ell \leqslant d_1}
	\alpha_{d_1+d_{2}}^{k, \ell}\omega_{k, \ell}\left( \gamma\left( t \right), \gamma^{\prime}\left( t \right) \right) \right)
\end{align*}
Recall that the path $\gamma$ is horizontal if $(L_{\gamma(t)^{-1}})_{\ast}\gamma^{\prime}(t)\in\h\times\{0\}$, and thus we have

\begin{align*}
& a(t)=(a_{d_{1}+1}(t), \ldots, a_{d_{1}+d_{2}}(t)),
\\
& a_{j}^{\prime}\left( t \right) =\sum_{1\leqslant k< \ell \leqslant d_1}
\alpha_{j}^{k, \ell}\omega_{k, \ell}\left( A\left( t \right), A^{\prime}\left( t \right) \right), j=d_{1}+1, ..., d_{1}+d_{2},
\end{align*}
for a.e.~$t \in [0, 1]$. That is,  a path $\gamma$  with $\gamma(0)=e$ is horizontal in a stratified group $G$ of step $2$ if it is of the form

\begin{equation}\label{e.horiz2}
	\gamma (t) = \left(A(t), \int_0^t Q^{(2)}(A(s), A^{\prime}(s))\,ds\right)
\end{equation}
	where $A(0)=0$, $Q^{(2)} = (Q_{d_1+1},\ldots,Q_{d_1+d_{2}})$ and
\[
Q_{j}\left( x, y \right)=\sum_{1\leqslant k<\ell\leqslant d_1}
	\alpha_{j}^{k, \ell}\omega_{k, \ell}\left( x^{(1)}, y^{(1)} \right).
\]
 The length of $\gamma$ is then given by
\begin{align*}
\ell(\gamma)
	= \int_0^1 |\gamma^{\prime}(s)|_{\gamma(s)}\,ds
	= \int_0^1  | c_\gamma \left( t \right)|_{\mathcal{H}}\,dt
	= \int_0^1 |A^{\prime}(s)|_\h\,ds .
\end{align*}
\end{example}

The group $G$ as a sub-Riemannian manifold may then be equipped with a natural left-invariant Carnot-Carath\'{e}odory distance.

\begin{definition}\label{d.CCdistance}
For any $x_{1}, x_{2} \in G$ the \emph{Carnot-Carath\'{e}odory distance} is defined as
\begin{align*}
\rho_{cc} (x_{1}, x_{2}):= &\inf \left\{ \ell\left( \gamma \right):\,\, \gamma : [0,1] \longrightarrow G \text{ is horizontal}, \gamma(0)=x_{1}, \gamma(1)=x_{2} \right\}.
\end{align*}
We denote by
\[
d_{cc} \left( x \right):= \rho_{cc} \left( e, x\right)
\]
the corresponding norm.
\end{definition}

The assumption that $\h$ generates the Lie algebra in \eqref{e.Stratification} means that any basis of $\h$ will satisfy H\"{o}rmander's condition. Therefore any two points in $G$ can be connected by a horizontal path by the Chow--Rashevskii theorem, and the Carnot-Carath\'{e}odory distance is finite on $G$. By \cite[Theorem 5.15]{BonfiglioliLanconelliUguzzoniBook} the Carnot-Carath\'{e}odory distance is realized, that is, for any two points in $G$ there is a horizontal path connecting those points which is a geodesic, so the infimum in Definition \ref{d.CCdistance} is actually a minimum.

The Carnot-Carath\'{e}odory distance is just one of the distances on $G$ which is left-invariant and homogeneous with respect to dilations.

\begin{definition}[Homogeneous distances and norms]\label{d.HomogeneousDistance}
	A \emph{homogeneous distance} on $G$ is a continuous, left-invariant distance $\rho: G \times G \longrightarrow [0,\infty)$  such that
\begin{align*}
& \rho\left( D_{a} x, D_{a} y \right)=a\rho\left(  x, y \right)
\end{align*}
for any $a>0$ and $x, y \in G$.
The corresponding \emph{homogeneous norm} will be denoted by $d\left( x \right):=\rho\left(  e, x \right)$.
\end{definition}

It may be shown that all homogeneous norms on $G$ are equivalent.
\begin{prop}\label{p.5.1.4}\cite[Proposition 5.1.4]{BonfiglioliLanconelliUguzzoniBook}
	Let $d$ be any homogeneous norm on $G$. Then there exists a constant $c>0$ so that
	\[ c^{-1}|x|_G \leqslant d(x)\leqslant c|x|_G \]
	where
	\[ |x|_G := \left(\sum_{j=1}^r \|x^{(j)}\|_{\R^{d_j}}^{2r!/j}\right)^{1/2r!}. \]
\end{prop}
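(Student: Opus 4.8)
The plan is to run the standard compactness argument that compares an arbitrary homogeneous norm with the fixed ``gauge'' $|\cdot|_G$, exploiting that $G$ is identified with $\mathbb{R}^N$ via exponential coordinates and that both norms scale in the same way under the dilations $D_a$.

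First I would record three elementary properties of the map $x\mapsto|x|_G$. Since $D_a$ multiplies the component $x^{(j)}\in\R^{d_j}$ by $a^j$, we get $|D_a x|_G^{2r!}=\sum_{j=1}^r a^{2r!}\|x^{(j)}\|_{\R^{d_j}}^{2r!/j}=a^{2r!}|x|_G^{2r!}$, hence $|D_a x|_G=a|x|_G$ for all $a>0$; that is, $|\cdot|_G$ is homogeneous of degree $1$ with respect to the dilations. Because $r!/j\in\mathbb{N}$ for $1\leqslant j\leqslant r$, the quantity $|x|_G^{2r!}$ is a polynomial in the coordinates of $x$, so $|\cdot|_G$ is continuous on $\R^N\cong G$; and clearly $|x|_G=0$ if and only if $x=0=e$. (We neither need nor claim that $|\cdot|_G$ satisfies a triangle inequality.) On the other side, $d(x)=\rho(e,x)$ is continuous on $G$ since $\rho$ is a continuous distance, it vanishes exactly at $x=e$, and it is homogeneous of degree $1$: using that $\rho$ is homogeneous and $D_a e=e$, one has $d(D_a x)=\rho(D_a e,D_a x)=a\rho(e,x)=a\,d(x)$.

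Next, consider the unit gauge sphere $S:=\{x\in G:|x|_G=1\}$. It is closed by continuity of $|\cdot|_G$, and it is bounded in $\R^N$: $|x|_G=1$ forces $\|x^{(j)}\|_{\R^{d_j}}\leqslant 1$ for every $j$. Hence $S$ is compact. The function $d$ is continuous on $G$ and strictly positive on $S$ (as $e\notin S$), so it attains a minimum $m$ and a maximum $M$ on $S$ with $0<m\leqslant M<\infty$. Finally I would extend this to all of $G$ by homogeneity: for $x\neq e$ put $a:=|x|_G>0$ and $y:=D_{1/a}(x)\in S$, so $x=D_a(y)$ and
\[
d(x)=d(D_a y)=a\,d(y)\in[\,m\,|x|_G,\ M\,|x|_G\,],
\]
while for $x=e$ both sides vanish. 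Taking $c:=\max(M,1/m)$ yields $c^{-1}|x|_G\leqslant d(x)\leqslant c|x|_G$ for all $x\in G$.

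The argument is essentially routine: the contraction principle of the proof is compactness of the gauge sphere together with positivity and continuity of $d$ on it. The only point requiring a moment's care is the bookkeeping for $|\cdot|_G$ — namely that the exponent $2r!/j$ is chosen precisely so that $|x|_G^{2r!}$ is a polynomial (hence $|\cdot|_G$ is continuous) and so that $|\cdot|_G$ transforms as $|D_a x|_G=a|x|_G$; once this is in place there is no real obstacle.
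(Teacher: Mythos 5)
Your argument is correct and is exactly the standard proof of this fact (the paper itself only cites \cite[Proposition 5.1.4]{BonfiglioliLanconelliUguzzoniBook}, whose proof is the same dilation-homogeneity-plus-compactness-of-the-gauge-sphere argument you give). All the key checks — that $|D_ax|_G=a|x|_G$, that $|x|_G^{2r!}$ is a polynomial so $|\cdot|_G$ is continuous, that $S=\{|x|_G=1\}$ is compact and $d$ is continuous and strictly positive there — are carried out correctly, so there is nothing to add.
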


Similarly, all homogeneous distances satisfy the following.
\begin{prop}\label{p.NormEquiv} \cite[Proposition 5.15.1]{BonfiglioliLanconelliUguzzoniBook}
Let $\rho$ be any left-invariant homogeneous distance on $G$. Then, for any compact set $K \subset G$, there exists a constant $c_{K}>0$ such that
\begin{align*}
	c_{K}^{-1} \Vert x-y \Vert_{\mathbb{R}^{N}} \leqslant \rho\left( x, y \right) \leqslant c_{K} \Vert x-y \Vert_{\mathbb{R}^{N}}^{1/r}
\end{align*}
where $r$ is the step of $G$.
\end{prop}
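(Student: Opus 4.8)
The plan is to reduce everything to the comparison $\rho(e,z)\asymp|z|_G$ recorded in Proposition~\ref{p.5.1.4}, and then to relate the gauge $|z|_G$ to the Euclidean norm $\|z\|_{\mathbb{R}^N}$ by two elementary steps: a Taylor estimate for the polynomial group law, and some exponent bookkeeping for $|\cdot|_G$. By left-invariance, $\rho(x,y)=\rho(e,x^{-1}\star y)=d(w)$ where $w:=x^{-1}\star y$ and $d(z):=\rho(e,z)$, so Proposition~\ref{p.5.1.4} reduces the claim to proving, for $x,y\in K$, that
\[ c_K^{-1}\|x-y\|_{\mathbb{R}^N}\leqslant |w|_G\leqslant c_K\|x-y\|_{\mathbb{R}^N}^{1/r}. \]

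First I would compare $\|w\|_{\mathbb{R}^N}$ with $\|x-y\|_{\mathbb{R}^N}$ on $K$. By \eqref{e.3.3} the maps $\Phi(x,y):=x^{-1}\star y=(-x)\star y$ and $\Psi(x,w):=x\star w$ are polynomial, with $\Phi(x,x)=e$ and $\Psi(x,e)=x$, and for fixed $x$ the maps $y\mapsto\Phi(x,y)$ and $w\mapsto\Psi(x,w)$ are mutually inverse. Since $K$ is compact, $w=\Phi(x,y)$ ranges over the compact set $K':=\Phi(K\times K)$, and a first-order Taylor estimate in the second variable (the relevant Jacobians being polynomial, hence bounded on compact sets) yields a constant $C_K\geqslant1$ with $C_K^{-1}\|x-y\|_{\mathbb{R}^N}\leqslant\|w\|_{\mathbb{R}^N}\leqslant C_K\|x-y\|_{\mathbb{R}^N}$ for all $x,y\in K$; here the right inequality uses $w=\Phi(x,y)-\Phi(x,x)$ and the left uses $x-y=\Psi(x,e)-\Psi(x,w)$.

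Next I would compare $|w|_G$ with $\|w\|_{\mathbb{R}^N}$ for $w$ in the bounded set $K'$, say with $\|w\|_{\mathbb{R}^N}\leqslant R_K$. Writing $|w|_G^{2r!}=\sum_{j=1}^r\|w^{(j)}\|_{\mathbb{R}^{d_j}}^{2r!/j}$ and using $2r!/r\leqslant 2r!/j\leqslant 2r!$ for $1\leqslant j\leqslant r$, a short case analysis according to whether each block norm $\|w^{(j)}\|$ is at most $1$ or not gives a constant $c_K'\geqslant1$, depending only on $r$ and $R_K$, with $(c_K')^{-1}\|w\|_{\mathbb{R}^N}\leqslant|w|_G\leqslant c_K'\|w\|_{\mathbb{R}^N}^{1/r}$ on $K'$: for the upper bound one dominates each term $\|w^{(j)}\|^{2r!/j}$ by a constant multiple of $\|w\|^{2r!/r}$, and for the lower bound one keeps only the largest block and uses $\max_j\|w^{(j)}\|\geqslant\|w\|/\sqrt r$. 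Chaining the three comparisons then gives the upper bound $\rho(x,y)=d(w)\lesssim_K|w|_G\lesssim_K\|w\|^{1/r}\lesssim_K\|x-y\|^{1/r}$ and the lower bound $\rho(x,y)=d(w)\gtrsim_K|w|_G\gtrsim_K\|w\|\gtrsim_K\|x-y\|$, and one takes $c_K$ to be the larger of the resulting constants.

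Granting Proposition~\ref{p.5.1.4} (whose proof is itself a dilation--compactness argument on the sphere $\{|z|_G=1\}$), none of the remaining steps is deep. The point that requires care is the dependence on $K$: because $\star$ is a genuinely nonlinear polynomial, the comparison between $\|x-y\|$ and $\|x^{-1}\star y\|$, and likewise the passage from $|w|_G$ to powers of $\|w\|$, are only available after confining $w$ to a bounded region, and it is exactly this confinement that makes the constant $K$-dependent and precludes a uniform statement.
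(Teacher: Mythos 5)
Your argument is correct and complete: the reduction via left-invariance to $d(w)$ with $w=x^{-1}\star y$, the appeal to Proposition~\ref{p.5.1.4} to replace $d(w)$ by the gauge $|w|_G$, the polynomial Taylor/mean-value comparison of $\Vert w\Vert_{\mathbb{R}^N}$ with $\Vert x-y\Vert_{\mathbb{R}^N}$ on compacts, and the exponent bookkeeping relating $|w|_G$ to $\Vert w\Vert_{\mathbb{R}^N}$ and $\Vert w\Vert_{\mathbb{R}^N}^{1/r}$ on bounded sets all check out. The paper does not prove this statement itself but cites it from \cite[Proposition 5.15.1]{BonfiglioliLanconelliUguzzoniBook}, and your proof is essentially the standard argument given there, with the $K$-dependence of the constant correctly traced to the two local comparison steps.
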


Therefore the topology of a homogeneous Carnot group $\left( \mathbb{R}^{N}, \star \right)$ with respect to the Carnot--Carath\'{e}odory distance (or any other homogeneous distance) coincides with the Euclidean topology of $\mathbb{R}^{N}$. More precisely, the categories of open, closed, bounded, or compact  sets coincide in these two topologies \cite[Proposition 5.15.4]{BonfiglioliLanconelliUguzzoniBook}. There is a huge literature on the subject, starting with Chow and Rashevsky. More details on homogeneous 
distances can be found in \cite[Sections 5.1 and 5.2]{BonfiglioliLanconelliUguzzoniBook}, and many references can be found in the bibliography of that text.

Recall that if $G$ is a general Lie group with a Lie algebra $\mathfrak{g}$ equipped with an inner product $\langle \cdot, \cdot \rangle$, we can define the corresponding left-invariant Riemannian distance on $G$. Then the map $(L_x)_{\ast}v=dL_{x}\left( v \right): G \times \mathfrak{g} \longrightarrow TG$ as introduced in Notation~\ref{n.2.11}

\begin{align*}
dL_{\cdot} \left( \cdot \right): G \times \mathfrak{g} &\longrightarrow TG, 
\\
\left( x, v \right) &\longmapsto \left( L_{x} \right)_{\ast} v \in T_{x} G
\end{align*}
is smooth, and thus $(L_{x})_{\ast} v$ is locally Lipschitz as a mapping in $(x,v)$ with respect to the product topology on $TG$.

For the present paper, we assume that only $\h$ is equipped with an inner product $\langle\cdot, \cdot\rangle_\h$ with $|\cdot|_\h$ being the associated norm on $\h\cong\R^{d_1}$. We will need an analogous Lipschitz property in this setting. We use Proposition \ref{p.tt} to prove the following statement.

\begin{prop}
For any compact domain $D\subset G\times  \mathcal{H}$, there exists a constant $C_D>0$ such that for any $(x,v), (y,u)\in D$,
\begin{equation}\label{eq-cont-L}
\Vert(L_x)_{\ast}u-(L_y)_{\ast}v\Vert_{\R^N} \leqslant C_D\left(\vert u-v\vert_{\mathcal{H}}+\rho(x,y)\right),
\end{equation}
	where $\rho$ can be the Euclidean norm on $\mathbb{R}^{N}$ or any left-invariant homogeneous distance, and the constant $C_D$ depends only on $D$ and the choice of $\rho$. Moreover, for any $x, y\in K$, a compact subset of $G$, and $v\in \h$

\begin{equation}\label{lemma-transla-lips}
\|(L_x)_{\ast} v-(L_y)_{\ast} v \|_{\R^N}\leqslant C_K |v|_\h \|x-y\|_{\R^N}.
\end{equation}

\end{prop}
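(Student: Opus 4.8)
The plan is to read both inequalities off the explicit formula for $(L_x)_\ast v$ in Proposition~\ref{p.tt}, exploiting that this formula is \emph{linear} in the Lie algebra slot and \emph{polynomial} in the group slot. Fix a compact $K\subset G$ containing the projection of $D$ onto $G$ and also the compact set appearing in the ``moreover'' clause (the projection of a compact set is compact). By Proposition~\ref{p.tt}, for $x=e^{\X}\in G$ and $w\in\h$ one has coordinate-wise
\[
\big((L_x)_\ast w\big)_j = w_j + \sum_{(k,\ell)\in I_j} C_j^{k,\ell}(x)\,\omega_{k,\ell}(x,w),\qquad j=1,\dots,N,
\]
where each $C_j^{k,\ell}$ is a polynomial in $x$ depending only on the group structure (and the sum is void for $j\leqslant d_1$); since $\omega_{k,\ell}(x,w)=x_kw_\ell-x_\ell w_k$ is bilinear, $w\mapsto(L_x)_\ast w$ is linear, and $|\omega_{k,\ell}(x,w)|\leqslant C\|x\|_{\R^N}\,|w|_\h$ after identifying $\h\cong\R^{d_1}$ and using equivalence of norms on this finite-dimensional space.

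First I would establish \eqref{lemma-transla-lips}. For $v\in\h$ and $x,y\in K$, subtracting the formulas above and using the bilinearity identity $\omega_{k,\ell}(x,v)-\omega_{k,\ell}(y,v)=\omega_{k,\ell}(x-y,v)$ gives
\[
\big((L_x)_\ast v-(L_y)_\ast v\big)_j = \sum_{(k,\ell)\in I_j}\Big[\big(C_j^{k,\ell}(x)-C_j^{k,\ell}(y)\big)\,\omega_{k,\ell}(x,v) + C_j^{k,\ell}(y)\,\omega_{k,\ell}(x-y,v)\Big].
\]
Polynomials are Lipschitz on the compact set $K$, so $|C_j^{k,\ell}(x)-C_j^{k,\ell}(y)|\leqslant C_K\|x-y\|_{\R^N}$; combining this with $|\omega_{k,\ell}(x,v)|\leqslant C_K|v|_\h$, $|\omega_{k,\ell}(x-y,v)|\leqslant C\|x-y\|_{\R^N}|v|_\h$, and the boundedness of $\|y\|_{\R^N}$ and of $|C_j^{k,\ell}|$ on $K$, one obtains $\|(L_x)_\ast v-(L_y)_\ast v\|_{\R^N}\leqslant C_K\,|v|_\h\,\|x-y\|_{\R^N}$. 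Since the estimate never used a bound on $v$, it holds for all $v\in\h$, which is precisely \eqref{lemma-transla-lips}.

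For \eqref{eq-cont-L} I would split $(L_x)_\ast u-(L_y)_\ast v=(L_x)_\ast(u-v)+\big((L_x)_\ast v-(L_y)_\ast v\big)$. By the first-paragraph formula and boundedness of $\|x\|_{\R^N}$ and of $|C_j^{k,\ell}|$ on $K$, the first term satisfies $\|(L_x)_\ast(u-v)\|_{\R^N}\leqslant C_K|u-v|_\h$; the second is controlled by \eqref{lemma-transla-lips}, and since $(x,v)\in D$ forces $|v|_\h$ to be bounded, $\|(L_x)_\ast v-(L_y)_\ast v\|_{\R^N}\leqslant C_D\|x-y\|_{\R^N}$. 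This proves \eqref{eq-cont-L} when $\rho$ is the Euclidean distance on $\R^N$. For a general left-invariant homogeneous distance $\rho$, Proposition~\ref{p.NormEquiv} supplies $c_K>0$ with $\|x-y\|_{\R^N}\leqslant c_K\,\rho(x,y)$ on $K$, so the Euclidean bound upgrades to $\|(L_x)_\ast u-(L_y)_\ast v\|_{\R^N}\leqslant C_D(|u-v|_\h+\rho(x,y))$ with $C_D$ depending only on $D$ and $\rho$. I do not expect any substantive obstacle here; the only point requiring care is the bookkeeping of which variables must be controlled through compactness (the group variables $x,y$ always, and $u,v$ only for \eqref{eq-cont-L}) versus which are handled automatically by the (bi)linearity of the formula in Proposition~\ref{p.tt}.
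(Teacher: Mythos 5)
Your proof is correct and follows essentially the same route as the paper: both arguments read the two estimates directly off the coordinate formula for $(L_x)_{\ast}$ in Proposition~\ref{p.tt}, bound the bilinear forms $\omega_{k,\ell}$ on the compact set, and upgrade from the Euclidean distance to a general homogeneous distance via Proposition~\ref{p.NormEquiv}. If anything, your bookkeeping is slightly more careful than the paper's, since you explicitly control the term $\bigl(C_j^{k,\ell}(x)-C_j^{k,\ell}(y)\bigr)\omega_{k,\ell}(\cdot,\cdot)$ via the local Lipschitz property of the polynomial coefficients, a contribution the paper's displayed inequality absorbs implicitly into the constant.
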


\begin{proof}
	By Proposition \ref{p.tt}
\begin{align*}
	( L_x )_{\ast}\left( u \right)&-\left( L_y \right)_{\ast}\left( v \right)
		=u-v\\
		&+\left( \mathbf{0}_{d_{1}},  \sum_{(k, \ell)\in I_{d_1+1}}C_{d_{1}+1}^{k, \ell}\left( x \right) \omega_{k, \ell}\left( x, u \right), \ldots, \sum_{(k, \ell)\in I_N}C_{N}^{k, \ell}\left( x \right) \omega_{k, \ell}\left( x, u \right)  \right)
\\
&
	-\left( \mathbf{0}_{d_{1}},  \sum_{(k, \ell)\in I_{d_1+1}}C_{d_{1}+1}^{k, \ell} \left( y \right)\omega_{k, \ell}\left( y, v \right), \ldots, \sum_{(k, \ell)\in I_N}C_{N}^{k, \ell} \left( y \right)\omega_{k, \ell}\left( y, v \right)  \right).
\end{align*}
We have that
\[
\vert \omega_{k, \ell}\left( x, u \right)-\omega_{k, \ell}\left( y, v \right) \vert \leqslant \Vert v \Vert_{\mathbb{R}^{N}}\Vert x-y \Vert_{\mathbb{R}^{N}}+\Vert x \Vert_{\mathbb{R}^{N}}\Vert u-v \Vert_{\mathbb{R}^{N}}
\]
and so
\begin{multline*}
\Vert(L_x)_{\ast}\left( u \right)-(L_y)_{\ast}\left( v \right) \Vert_{\R^N}
\\
\leqslant\Vert u-v \Vert_{\mathbb{R}^{N}}+\max_{j, k, \ell}\{\vert C_{j}^{k, \ell}\left( x \right)\vert,|C_j^{k,\ell}(y)|\} \left(\Vert v \Vert_{\mathbb{R}^{N}}\Vert x-y \Vert_{\mathbb{R}^{N}}+\Vert x \Vert_{\mathbb{R}^{N}}\Vert u-v \Vert_{\mathbb{R}^{N}}\right).
\end{multline*}
	For $u, v \in \mathcal{H}$ we have $\Vert u-v \Vert_{\mathbb{R}^{N}}=\Vert u-v \Vert_{\R^{d_1}}\leqslant C|u-v|_\h$, and so \eqref{eq-cont-L} (with $\rho=\|\cdot\|_{\R^N}$) and \eqref{lemma-transla-lips} follow. Inequality \eqref{eq-cont-L} for a general left-invariant homogeneous distance $\rho$ follows from Proposition \ref{p.NormEquiv}.
\end{proof}

The next lemma is a version of Gr\"{o}nwall's lemma in the sub-Riemannian setting, which says that if $\sigma$ and $\gamma$ are horizontal paths starting at the origin whose Maurer-Cartan forms $c_{\sigma}$ and $c_{\gamma}$ are close in $L^1$, then the paths cannot get too far away from each other.

\begin{lemma}[Gr\"{o}nwall's lemma]\label{l.Gronwall2}
Let $G$ be a homogeneous Carnot group modeled on $\mathbb{R}^{N}$. Suppose  $\varepsilon >0$,  and $\sigma, \gamma:[0,1] \to G$ are horizontal paths such that $\sigma(0)=\gamma(0)=e$,
\begin{equation*}
\int_0^1|c_ \sigma(t)|_\h dt <\infty,
\end{equation*}
and
	\[ \int_0^1 |c_{\sigma}\left( t \right)-c_{\gamma}\left( t \right)|_{\h}\, dt < \varepsilon.
	\]	
	Then there exists a constant $C=C(\|c_\sigma\|_{L^1})<\infty$  such that
\begin{equation}\label{eq-gronwall-conclu}
	\rho_{cc}\left( \sigma(1),\gamma(1) \right) < C\varepsilon.
\end{equation}
\end{lemma}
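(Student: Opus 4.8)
The plan is to reduce the statement to a Grönwall-type estimate in the exponential coordinates $\mathbb{R}^N$, using the fact (Proposition~\ref{p.NormEquiv}) that a left-invariant homogeneous distance such as $\rho_{cc}$ is comparable to the Euclidean norm $\|\cdot\|_{\R^N}$ on compact sets, together with the Lipschitz bound \eqref{eq-cont-L} for the pushforward $(L_x)_\ast$. First I would write the two horizontal paths as the solutions of ODEs driven by their Maurer-Cartan forms: since $c_\gamma(t) = (L_{\gamma(t)^{-1}})_\ast \gamma'(t)$, inverting gives $\gamma'(t) = (L_{\gamma(t)})_\ast c_\gamma(t)$, and similarly $\sigma'(t) = (L_{\sigma(t)})_\ast c_\sigma(t)$. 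Both are horizontal, so $c_\gamma(t), c_\sigma(t)\in\h$ for a.e.~$t$, and $\int_0^1 |c_\sigma(t)|_\h\,dt<\infty$ while $\int_0^1|c_\sigma - c_\gamma|_\h\,dt<\varepsilon$.

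The first substantive step is an a priori bound: the path $\sigma$ stays in a fixed compact set $K_\sigma$ depending only on $\|c_\sigma\|_{L^1}$. This follows because the length of $\sigma$ is exactly $\|c_\sigma\|_{L^1}$ by \eqref{eq-length}, so $d_{cc}(\sigma(t))\leqslant \|c_\sigma\|_{L^1}$ for all $t$, and closed $\rho_{cc}$-balls are compact (they are compact in the Euclidean topology by Proposition~\ref{p.NormEquiv} and the Hopf--Rinow-type statement for Carnot groups). Likewise, since $\|c_\gamma\|_{L^1}\leqslant \|c_\sigma\|_{L^1}+\varepsilon$, the path $\gamma$ lies in a compact set $K$ depending only on $\|c_\sigma\|_{L^1}$ (and, say, on $\varepsilon\leqslant 1$). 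Let $C_D$ be the constant from \eqref{eq-cont-L} for the compact domain $D = K\times \{v\in\h : |v|_\h\leqslant M\}$, where $M$ is chosen so that... actually $|c_\sigma|,|c_\gamma|$ need not be bounded pointwise, so instead I would use the form \eqref{lemma-transla-lips}: for $x,y\in K$ and $v\in\h$, $\|(L_x)_\ast v - (L_y)_\ast v\|_{\R^N}\leqslant C_K |v|_\h \|x-y\|_{\R^N}$, and also $\|(L_x)_\ast v - (L_x)_\ast u\|_{\R^N}\leqslant C_K |v-u|_\h$ (from the $u-v$ plus bilinear terms in Proposition~\ref{p.tt}, the polynomial coefficients $C_j^{k,\ell}(x)$ and the factor $\|x\|_{\R^N}$ being bounded on $K$).

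The main estimate then runs as follows. Write $f(t) := \|\sigma(t)-\gamma(t)\|_{\R^N}$. Then
\[
\sigma(t)-\gamma(t) = \int_0^t \big[(L_{\sigma(s)})_\ast c_\sigma(s) - (L_{\gamma(s)})_\ast c_\gamma(s)\big]\,ds,
\]
and we split the integrand as $\big[(L_{\sigma(s)})_\ast c_\sigma(s) - (L_{\gamma(s)})_\ast c_\sigma(s)\big] + \big[(L_{\gamma(s)})_\ast c_\sigma(s) - (L_{\gamma(s)})_\ast c_\gamma(s)\big]$. The first bracket is bounded by $C_K |c_\sigma(s)|_\h \,f(s)$ by \eqref{lemma-transla-lips}, and the second by $C_K |c_\sigma(s)-c_\gamma(s)|_\h$. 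Hence
\[
f(t) \leqslant C_K\int_0^t |c_\sigma(s)|_\h f(s)\,ds + C_K\int_0^1 |c_\sigma(s)-c_\gamma(s)|_\h\,ds \leqslant C_K\varepsilon + C_K\int_0^t |c_\sigma(s)|_\h f(s)\,ds.
\]
Applying the classical (integral) Grönwall inequality with the integrable weight $s\mapsto C_K|c_\sigma(s)|_\h$ yields $f(1)\leqslant C_K\varepsilon\,\exp\!\big(C_K\|c_\sigma\|_{L^1}\big) =: C_1(\|c_\sigma\|_{L^1})\,\varepsilon$. Finally, since $\sigma(1),\gamma(1)$ both lie in the compact set $K$, Proposition~\ref{p.NormEquiv} gives $\rho_{cc}(\sigma(1),\gamma(1))\leqslant c_K \|\sigma(1)-\gamma(1)\|_{\R^N}^{1/r} \leqslant c_K (C_1\varepsilon)^{1/r}$, which is the claimed bound $\rho_{cc}(\sigma(1),\gamma(1))<C\varepsilon$ once we absorb the exponent — wait, this only gives a $\varepsilon^{1/r}$ rate, which suffices for exponential-approximation purposes but is weaker than \eqref{eq-gronwall-conclu} as literally stated. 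To recover the linear rate I would instead bound $\rho_{cc}(\sigma(1),\gamma(1))$ directly by the length of an explicit connecting horizontal path rather than through the Euclidean norm: concatenate $\sigma$ (run backwards from $\sigma(1)$ to $e$) with $\gamma$, or better, note that $t\mapsto \sigma(t)^{-1}\star\gamma(t)$ is absolutely continuous with left-logarithmic derivative controlled by $|c_\sigma - c_\gamma|_\h$ up to horizontal-projection errors that are themselves $O(f(t))$; then $\rho_{cc}(\sigma(1),\gamma(1)) = d_{cc}(\sigma(1)^{-1}\star\gamma(1))$ is at most the length of that path, which integrates to $O(\varepsilon)$ after the Grönwall step. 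I expect the technical heart of the argument to be exactly this last point — controlling $\rho_{cc}$ (not merely the Euclidean distance) linearly in $\varepsilon$, since the horizontal constraint must be respected by whatever connecting path realizes the Carnot--Carathéodory distance, and one must check that the ``error'' directions introduced by comparing $(L_{\sigma(s)})_\ast$ and $(L_{\gamma(s)})_\ast$ can be re-absorbed into horizontal motion at comparable cost.
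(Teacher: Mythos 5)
Your argument is, up to the final step, the paper's own proof: the same splitting of $(L_{\sigma(s)})_{\ast} c_\sigma(s)-(L_{\gamma(s)})_{\ast} c_\gamma(s)$ into one bracket controlled by \eqref{lemma-transla-lips} and one controlled by \eqref{eq-cont-L}, followed by a Gr\"{o}nwall inequality with the integrable weight $C_K|c_\sigma(\cdot)|_\h$ (the paper runs it in differential form on $\sqrt{F}$, you in integral form --- immaterial), yielding $\|\sigma(1)-\gamma(1)\|_{\R^N}\leqslant C(\|c_\sigma\|_{L^1})\,\varepsilon$. The issue you raise at the end is genuine, and it applies equally to the paper: the proof there also stops at the Euclidean estimate and appeals to the ``equivalence'' of Proposition \ref{p.NormEquiv}, but that equivalence is only H\"{o}lder in the relevant direction, $\rho_{cc}(x,y)\leqslant c_K\|x-y\|_{\R^N}^{1/r}$, so what is actually established is $\rho_{cc}(\sigma(1),\gamma(1))\leqslant C\varepsilon^{1/r}$ rather than the stated linear rate. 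Your sketched repair --- bounding $d_{cc}(\sigma(1)^{-1}\star\gamma(1))$ by the length of an explicit horizontal connecting path built from $t\mapsto\sigma(t)^{-1}\star\gamma(t)$ --- is the natural way to recover the linear rate, but you have not carried it out, and neither does the paper. The discrepancy is harmless for the paper's purposes: everywhere the lemma is invoked (the exponentially-good-approximation propositions, Lemma \ref{l.lip}, Proposition \ref{p.rb}) only the qualitative fact that $\rho_{cc}(\sigma(1),\gamma(1))\to 0$ as $\varepsilon\to 0$, with a modulus depending only on $\|c_\sigma\|_{L^1}$, is used, and $C\varepsilon^{1/r}$ delivers that. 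So your proposal matches the paper's proof in substance and correctly identifies the one place where the stated conclusion outruns the written argument.
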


\begin{proof} First note that from the assumptions there exists a compact set $K$ that contains both paths $\gamma\left([0, 1]\right)$ and $\sigma\left([0, 1]\right)$ entirely. By Proposition \ref{p.NormEquiv}, it then suffices to prove that $\|\sigma(1)-\gamma(1)\|_{\R^N}\leqslant C^{\prime}\varepsilon$ for some constant $C^{\prime}<\infty$. In the following estimates $C$ will be a constant that depends only on $K$, which in turn depends on $\|c_\sigma\|_{L^1}$ but may vary from line to line.
	
To begin, taking the derivative of $\gamma$ and $\sigma$ in the ambient Euclidean space $\mathbb{R}^{N}$, we have that

\begin{align*}
\gamma^{\prime}\left( t \right)-\sigma^{\prime}\left( t \right)
&
=\left(L_{\gamma\left( t \right)}\right)_{\ast}c_{\gamma}\left( t \right)
	-\left(L_{\sigma\left( t \right)}\right)_{\ast}c_{\sigma}\left( t \right)
\\
&=\left(L_{\gamma\left( t \right)}\right)_{\ast}\left(c_{\gamma}\left( t \right)-c_{\sigma}\left( t \right) \right)
+\left(L_{\gamma\left( t \right)}-L_{\sigma\left( t \right)}\right)_{\ast}c_{\sigma}\left( t \right).
\notag
\end{align*}
Hence 
\begin{align*}
\Vert \gamma^{\prime}\left( t \right)-\sigma^{\prime}\left( t \right)\Vert_{\mathbb{R}^{N}}
	&\leqslant
\Vert \left(L_{\gamma\left( t \right)}\right)_{\ast}\left(c_{\gamma}\left( t \right)-c_{\sigma}\left( t \right) \right)\Vert_{\mathbb{R}^{N}}
+ \Vert \left(L_{\gamma\left( t \right)}-L_{\sigma\left( t \right)}\right)_{\ast}c_{\sigma}\left( t \right) \Vert_{\mathbb{R}^{N}}
\\
& \leqslant
C\left(|c_{\gamma}\left( t \right)-c_{\sigma}\left( t \right) |_{\mathcal{H}}
+ |c_{\sigma}\left( t \right) |_\h \|{\sigma}\left( t \right)- {\gamma}\left( t \right)\|_{\R^N}\right),
\end{align*}
where in the second inequality we have applied \eqref{eq-cont-L} and \eqref{lemma-transla-lips}.

Now let
\[
F\left( t \right):=\Vert\gamma\left( t \right)- \sigma\left( t \right)\Vert_{\mathbb{R}^{N}}^{2}.
\]
Then for a.e.~$t$
\begin{align*}
\frac{dF}{dt}
		&= 2\langle \gamma^{\prime}\left( t \right)-\sigma^{\prime}\left( t \right), \gamma\left( t \right)- \sigma\left( t \right)\rangle_{\mathbb{R}^{N}}
\leqslant 2 \Vert \gamma^{\prime}\left( t \right)-\sigma^{\prime}\left( t \right)\Vert_{\mathbb{R}^{N}}
\Vert \gamma\left( t \right)- \sigma\left( t \right)\Vert_{\mathbb{R}^{N}}
\\
	&\leqslant 2C\left(\vert c_{\gamma}\left( t \right)-c_{\sigma}\left( t \right) \vert_\h + |c_{\sigma}\left( t \right) |_\h\|\gamma(t)-\sigma(t)\|_{\R^N}\right)
\Vert \gamma\left( t \right)- \sigma\left( t \right)\Vert_{\mathbb{R}^{N}}.
\end{align*}

Let $t_0:=\sup_{t\in[0,1]}\{\sigma(t)=\gamma(t)\}$. If $t_0=1$, then we have that $\sigma(1)=\gamma(1)$ and \eqref{eq-gronwall-conclu} holds automatically. If $t_0\in [0,1)$, we can consider new paths $\sigma[t_0, 1]$ and $\gamma[t_0, 1]$ starting at $\sigma(t_0)=\gamma(t_0)$ and show that their endpoints are close. With this argument we can then assume that $F(t)\not=0$ for all $t\in (0, 1)$.
Consider $G(t):= \|\gamma(t)-\sigma(t)\|_{\R^N} =\sqrt{F(t)}$, then
\begin{align*}
\frac{dG}{dt}
		&=\frac{1}{2G(t)}F'(t)\leqslant  C\vert c_{\gamma}\left( t \right)-c_{\sigma}\left( t \right) \vert_\h  + C\,  |c_{\sigma}\left( t \right) |_\h G(t)
\end{align*}

We have
\begin{equation}\label{eq-Gronwall-a}
\frac{dG}{dt} \leqslant A(t)+B(t)G\left( t \right),
\end{equation}
where $A(t)=C\vert c_{\gamma}\left( t \right)-c_{\sigma}\left( t \right) \vert_\h$ and $B(t)=C |c_\sigma(t)|_\h$.
Let $b(t)=\int_0^t B(s)ds={C}\int_0^t |c_\sigma(s)|_\h\, ds$. Then \eqref{eq-Gronwall-a} can be written as
\[
	\frac{d}{dt}\left( e^{-b(t)}G\left( t \right) \right) \leqslant A(t) e^{-b(t)}.
\]
Clearly $G(0)=0$. It follows that
\[
	e^{-b(t)}G\left( t \right)\leqslant \int_0^t A(s)e^{-b(s)}\,ds.
\]
In particular for $t=1$ we have that
\begin{align*}
	G\left( 1 \right)=\Vert\gamma\left( 1 \right)- \sigma\left( 1 \right)\Vert_{\mathbb{R}^{N}}
	&\leqslant C e^{b(1)}
	\int_0^1 |c_\gamma(t)-c_\sigma(t)|_\mathcal{H}e^{-b(t)} \,dt \\
	&\leqslant C e^{C\|\sigma\|_{L^1}}\int_0^1 |c_\gamma(t)-c_\sigma(t)|_\mathcal{H} \,dt
	< C\varepsilon.
\end{align*}
\end{proof}

\begin{remark}
We consider the specific homogeneous distance $\rho_{cc}$ in this version of Lemma \ref{l.Gronwall2}, but in light of its proof and Proposition \ref{p.NormEquiv}, it is clear that this result could be stated with $\rho_{cc}$ replaced by any left-invariant homogeneous distance on $G$.
\end{remark}

\begin{remark}
This estimate appears in various places in the literature (see for example \cite[Lemma 6.7]{Breuillard2014} or \cite[Lemma 4.2.5]{LeDonneNotes2018}) for general Lie groups. However, in the general Lie group setting, it is difficult to make sense of comparing vectors in different tangent spaces in the absence of the unifying context of the ambient space $\R^N$.

That being said, the same proof as the one given here works for matrix Lie groups equipped with a left-invariant Riemannian distance $\rho$ under the (standard) assumption that the Lie bracket satisfies the continuity assumption
\begin{equation*}
\vert [ A, B ]\vert_{\mathfrak{g}} \leqslant M \vert A \vert_{\mathfrak{g}}\vert B\vert_{\mathfrak{g}}
\end{equation*}
for any $A, B \in \mathfrak{g}$.
\end{remark}

\begin{prop}\label{prop-L} Given a Carnot group $G$, define
\begin{align*}
	L: G &\longrightarrow \mathcal{H}:=\mathcal{V}_{1},
\\
	x &\longmapsto \int_{0}^{1} c_{\gamma}\left( t \right)dt,
\end{align*}
where $\gamma$ is any horizontal path  such that $\gamma\left(0\right)=e$ and $\gamma \left(1\right)=x$. Then
\[
L\left( x \right)= P_{\mathcal{H}}\left( \log \left( x\right) \right),
\]
where $P_{\mathcal{H}}$ is the projection onto $\mathcal{H}$, and in particular $L$ is well-defined independent of the choice of $\gamma$. Additionally, $L$ is continuous as a map from $\left( G, \rho_{cc} \right)$ to $\left( \mathcal{H}, \langle \cdot, \cdot \rangle_{\mathcal{H}} \right)$.
\end{prop}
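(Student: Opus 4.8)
The plan is to compute $L$ explicitly in exponential coordinates and read off both assertions from the resulting formula. First I would pass to exponential coordinates of the first kind (Definition~\ref{d.ExpCoord}), identifying $G$, $\mathfrak{g}$, and $\mathbb{R}^N$, so that $\log$ becomes the identity map, $e$ becomes $0$, and $P_{\mathcal{H}}$ becomes the coordinate projection $(x_1,\dots,x_N)\mapsto(x_1,\dots,x_{d_1})$ onto the first stratum. The key input is Proposition~\ref{p.tt}: for any $y\in G$ and $v\in\mathcal{H}$, the vector $(L_y)_{\ast}v$ equals $v$ plus a correction term whose first $d_1$ coordinates vanish (it is supported in $V_2\oplus\cdots\oplus V_r$, as is visible from the leading $\mathbf{0}_{d_1}$ in that formula). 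Consequently $P_{\mathcal{H}}\big((L_y)_{\ast}v\big)=v$ for every $v\in\mathcal{H}$.

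Next, let $\gamma:[0,1]\to G$ be any horizontal path with $\gamma(0)=e$ and $\gamma(1)=x$. By Definition~\ref{d.HorizontalPath}, $\gamma^{\prime}(t)=(L_{\gamma(t)})_{\ast}c_{\gamma}(t)$ with $c_{\gamma}(t)\in\mathcal{H}$ for a.e.\ $t$, so the observation above gives $P_{\mathcal{H}}(\gamma^{\prime}(t))=c_{\gamma}(t)$ for a.e.\ $t$; equivalently, writing $\gamma=(\gamma^{(1)},\dots,\gamma^{(r)})$, we have $(\gamma^{(1)})^{\prime}(t)=c_{\gamma}(t)$. Since a horizontal path is by definition absolutely continuous, the fundamental theorem of calculus applies componentwise in $\mathbb{R}^N$, and using linearity of $P_{\mathcal{H}}$,
\[
L(x)=\int_0^1 c_{\gamma}(t)\,dt=\int_0^1 P_{\mathcal{H}}(\gamma^{\prime}(t))\,dt=P_{\mathcal{H}}\!\left(\int_0^1\gamma^{\prime}(t)\,dt\right)=P_{\mathcal{H}}\big(\gamma(1)-\gamma(0)\big)=P_{\mathcal{H}}(x).
\]
Since $\log$ is the identity in these coordinates, $P_{\mathcal{H}}(x)=P_{\mathcal{H}}(\log x)$; as the right-hand side depends only on $x$ and not on $\gamma$, this simultaneously establishes the formula $L(x)=P_{\mathcal{H}}(\log x)$ and the fact that $L$ is well-defined independently of the choice of horizontal path.

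Continuity is then immediate: $L=P_{\mathcal{H}}\circ\log$ is the composition of the global diffeomorphism $\log:G\to\mathfrak{g}$ (recalled in Section~\ref{s.carnot}) with the linear, hence continuous, projection $P_{\mathcal{H}}:\mathfrak{g}\to\mathcal{H}$, so $L$ is continuous for the manifold topology on $G$. By Proposition~\ref{p.NormEquiv} the topology induced on $G$ by $\rho_{cc}$ coincides with the Euclidean topology, so $L$ is continuous as a map from $\left(G,\rho_{cc}\right)$ to $\left(\mathcal{H},\langle\cdot,\cdot\rangle_{\mathcal{H}}\right)$.

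I do not expect a serious obstacle here; the argument is essentially bookkeeping about the various identifications. The only two points needing a little care are (a) confirming that the error term in Proposition~\ref{p.tt} genuinely lies in the complement of $\mathcal{H}$, so that applying $P_{\mathcal{H}}$ annihilates it, and (b) justifying the interchange of $\int_0^1$ with $P_{\mathcal{H}}$ together with the use of the fundamental theorem of calculus — both legitimate because horizontal paths are absolutely continuous and $P_{\mathcal{H}}$ is a fixed linear map on $\mathbb{R}^N$.
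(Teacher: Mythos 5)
Your proof is correct. It is a close cousin of the paper's argument but the mechanics differ in a way worth noting. The paper first uses the BCDH formula to show that $P_{\mathcal{H}}\circ\log$ is a homomorphism onto $(\mathcal{H},+)$, i.e.\ $P_{\mathcal{H}}(\log(x_1x_2))=P_{\mathcal{H}}(\log x_1)+P_{\mathcal{H}}(\log x_2)$, and then differentiates $t\mapsto P_{\mathcal{H}}(\log\gamma(t))$ along the path using this additivity to obtain $\frac{d}{dt}P_{\mathcal{H}}(\log\gamma(t))=c_\gamma(t)$; you instead read the same identity off Proposition~\ref{p.tt} by observing that $(L_y)_{\ast}v$ and $v$ have the same first-stratum component, and then apply the fundamental theorem of calculus componentwise in $\mathbb{R}^N$. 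Both routes rest on the same structural fact (the first stratum of the group law is plain addition), so the core is equivalent; your version is slightly more coordinate-bound but avoids re-deriving the homomorphism property. The real divergence is in the continuity claim: the paper proves the stronger quantitative estimate $|L(x_1)-L(x_2)|_{\mathcal{H}}\leqslant\rho_{cc}(x_1,x_2)$ (so $L$ is $1$-Lipschitz for the Carnot--Carath\'eodory distance) by applying the integral identity to a path joining $x_1$ to $x_2$ and taking the infimum over such paths, whereas you only get topological continuity via Proposition~\ref{p.NormEquiv}. That suffices for the statement as written, but the Lipschitz bound is the form in which the result is most naturally reused, so it is worth recording that your argument in fact yields it too: applying your computation to a horizontal path from $x_1$ to $x_2$ gives $L(x_2)-L(x_1)=\int_0^1 c_\gamma(t)\,dt$, whence $|L(x_2)-L(x_1)|_{\mathcal{H}}\leqslant\ell(\gamma)$, and infimizing over $\gamma$ gives the Lipschitz estimate.
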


\begin{proof} First recall that $\log$ and $\exp$ are global diffeomorphisms in this setting. Therefore we can use the Baker-Campbell-Dynkin-Hausdorff formula \eqref{e.BCDH} to see that for any $x_{1}, x_{2} \in G$ we can find $\X_{1}, \X_{2} \in \mathfrak{g}$ such that $x_1=e^{\X_{1}}$ and $x_{2}=e^{\X_{2}}$, and thus
\begin{align*}
	P_{\mathcal{H}}\left( \log\left( x_{1}x_{2} \right)\right)
		&=P_{\mathcal{H}}\left( \log\left( e^{\X_{1}} e^{\X_{2}} \right)\right)
			=P_{\mathcal{H}}\left(\X_{1}\right)+P_{\mathcal{H}}\left(\X_{2}\right)\\
		&=P_{\mathcal{H}}\left( \log\left( e^{\X_{1}} \right)\right)+P_{\mathcal{H}}\left( \log\left(  e^{\X_{2}} \right)\right)
			=P_{\mathcal{H}}\left( \log\left( x_{1} \right)\right)+P_{\mathcal{H}}\left( \log\left(  x_{2} \right)\right).
\end{align*}
In particular, this implies that
\[
P_{\mathcal{H}}\left( \log\left( x_{1} x_{2} \right)\right)=P_{\mathcal{H}}\left( \log\left( x_{2} x_{1} \right)\right).
\]
Additionally,
\[
P_{\mathcal{H}}\left( \log\left( x^{-1} \right)\right)=P_{\mathcal{H}}\left( \log\left( e^{-\X} \right)\right)=-P_{\mathcal{H}}\left( \X \right)=-P_{\mathcal{H}}\left( \log  x  \right).
\]

Now let $\gamma$ be any horizontal path such that $\gamma\left(0\right)=e$ and $\gamma \left(1\right)=x$. Using the above observations for  $x_{1}=\gamma\left( t \right)$ and $x_{2}=\gamma\left( t+\varepsilon \right)$, we see that
\begin{multline*}
P_{\mathcal{H}}\left( \log \left( \gamma \left(t+\varepsilon \right) \right) \right)-P_{\mathcal{H}}\left( \log \left( \gamma \left(t \right) \right) \right)
=
P_{\mathcal{H}}\left( \log \left( \gamma \left(t \right)^{-1} \right) \right)+P_{\mathcal{H}}\left( \log \left( \gamma \left(t+\varepsilon \right) \right) \right)
\\
 =P_{\mathcal{H}}\left( \log \left( \gamma \left(t \right)^{-1}  \gamma \left(t+\varepsilon \right) \right) \right)-P_{\mathcal{H}}\left( \log \left( \gamma \left(t \right)^{-1}  \gamma \left(t \right) \right) \right),
\end{multline*}
since $P_{\mathcal{H}}\left( \log \left(  e \right) \right)=0$. Therefore
\begin{align*}
	\frac{d}{dt}P_{\mathcal{H}}\left( \log \left( \gamma \left(t \right) \right) \right)
	&= \frac{d}{d\varepsilon}\bigg|_0
		P_{\mathcal{H}}\left( \log \left( \gamma \left(t+\varepsilon \right) \right) \right) \\
	&=\left( P_{\mathcal{H}} \circ \log \right)_{\ast} \left( \left( L_{\gamma\left( t \right)}\right)_{\ast}^{-1} \gamma^{\prime}\left( t \right) \right)
\\
& =\left( P_{\mathcal{H}} \circ \log \right)_{\ast} \left( c_{\gamma}\left( t \right) \right)=c_{\gamma}\left( t \right),
\end{align*}
for a.e.\,$t$ in $[ 0, 1]$, since  $P_{\mathcal{H}} \circ \log: G \longrightarrow \mathcal{H}$, and its differential (pushforward)  $d\left( P_{\mathcal{H}} \circ \log \right): \mathfrak{g} \longrightarrow \mathcal{H}$ is the identity on the horizontal space $\mathcal{H}$. In particular,
\begin{align*}
& P_{\mathcal{H}}\left( \log \left( x \right) \right)=\int_{0}^{1} \frac{d}{dt}P_{\mathcal{H}}\left( \log \left( \gamma \left(t \right) \right) \right) dt=\int_{0}^{1}c_{\gamma}\left( t \right) dt \in \mathcal{H}
\end{align*}
for any horizontal path $\gamma$ connecting $e$ and $x$.

Similarly, if we have $x_{1}, x_{2} \in G$ and any horizontal path $\gamma$ connecting $x_{1}$ and $x_{2}$, we see that
\begin{align*}
& P_{\mathcal{H}}\left( \log \left( x_{1} \right) \right)-P_{\mathcal{H}}\left( \log \left( x_{2} \right) \right)=\int_{0}^{1}c_{\gamma}\left( t \right) dt.
\end{align*}
Thus,
\begin{align*}
& \vert P_{\mathcal{H}}\left( \log \left( x_{1} \right) \right)-P_{\mathcal{H}}\left( \log \left( x_{2} \right) \right)\vert_{\mathcal{H}}=\left\vert \int_{0}^{1}c_{\gamma}\left( t \right) dt \right\vert_\h
\leqslant \int_{0}^{1}\vert c_{\gamma}\left( t \right)\vert_\h dt=\ell\left( \gamma \right),
\end{align*}
and taking the infimum over all such horizontal paths $\gamma$ gives
\[
\left\vert L\left( x_{1} \right)-L\left( x_{2} \right) \right\vert_{\mathcal{H}}\leqslant \rho_{cc}\left( x_{1}, x_{2} \right),
\]
which implies continuity. \end{proof}

The significance of the map $L$ is that unlike in the case of Lie groups equipped with a Riemannian metric, we only have metric on the horizontal space $\mathcal{H}$, but $\log$ does not respect the horizontal structure. This is a fundamental difference from the techniques used in \cite[Proposition 5.2]{Versendaal2019b}, and one can think of the map $L$ as a horizontal logarithmic map.

\section{Large deviations}\label{s.LDP}

Now we return to our main result, Theorem~\ref{t.main}. Suppose $\left\{ X_{i} \right\}_{i=1}^{\infty}$ is a sequence of i.i.d.~random variables in $\mathcal{H}$ and define
	\[
S_{n}:=\exp(X_{1})\star\cdots\star \exp(X_{n}).
\]
Note that for a dilation $D_{a}: G \longrightarrow G$ by \eqref{e.Dilations} we have
\[
D_{a}S_{n}=\prod_{i=1}^{n}\star D_{a} \exp(X_{i}) =\prod_{i=1}^{n}\star \exp( \delta_{a}X_{i}).
\]

The proof consists of several steps. First in Section~\ref{ss.vector-ldp} we will consider a partial linearization of the $n$-fold group multiplication by partitioning $D_{\frac{1}{n}} S_{n}$ into a fix number of blocks. This linearization naturally lives in the product space $\mathcal{H}^{m}$, and so we will prove a large deviations principle for sequences of random vectors in $\mathcal{H}^m$. In Section~\ref{ss.approx} we will use the linearization to find a family of exponentially good approximations to $\{D_{\frac{1}{n}} S_{n}\}$. Then in Section~\ref{ss.group-ldp} we combine these results with the contraction principle, Theorem~\ref{thm-contract}, to prove Theorem~\ref{t.main}.

\subsection{Vector space LDP}
\label{ss.vector-ldp}

For a fixed $m\in\{1,\ldots,n\}$, we partition $D_{\frac{1}{n}} S_{n}$ into $m$ pieces as follows. For $k=0,\ldots,m-1$, let $n_{k}=k\lfloor\frac{n}{m}\rfloor$ and $n_m=n$, and for $k=1, \ldots, m$ we define
\[
S^{m,k}_n:=\prod_{i=n_{k-1}+1}^{n_{k}}\star \exp( \delta_{\frac1n}X_{i}).
\]
Now let
	\begin{align*}
Y_n^{m,k}:=L(S_n^{m,k})
		&=L\left(\exp(\delta_{\frac1n}X_{n_{k-1}+1})\star\cdots\star\exp(\delta_{\frac1n}X_{n_{k}})  \right)\\
	&=\frac{1}{n}\left(X_{n_{k-1}+1}+\cdots + X_{n_{k}}\right)
\in \mathcal{H},
\end{align*}
where $L$ is the map defined in Proposition \ref{prop-L}, and take
\begin{equation}\label{eq-Y-m}
Y_{n}^m:=(Y_n^{m,1},\dots, Y_n^{m,m} )\in \mathcal{H}^m.
\end{equation}
It will be useful later to note that, taking $d=\lfloor\frac{n}{m}\rfloor$ so that $n=dm+r$ for some $r\in\{0,1\ldots,m-1\}$, we have that for $k\in [m-1]$, each $Y^{m,k}_n$ consists of $d$ steps, and $Y^{m,m}_n$ consists of $d+r$ steps. Now we prove the following large deviation principle for $\{Y_{n}^m\}\in\h^m$.

\begin{prop}\label{p.Hldp}
Suppose $\{X_i\}_{i=1}^\infty $ are i.i.d.~mean 0 random variables in $\h$ such that
\[
\Lambda(\lambda) := \Lambda_X(\lambda)
	:= \log \E \left[\exp\left(\langle \lambda, X_{1} \rangle_{\mathcal{H}}\right)\right]
	\]
exists for all $\lambda\in\h$. Fix $m\in\mathbb{N}$. Then for any closed $F\subset \h^m$ and open $O\subset \h^m$ we have that
\[
\limsup_{n \to \infty}\frac1n \log\p \left(Y_n^{m}\in F \right)
\leqslant - \inf_{u\in F} I_m(u)
\]
and
\[
\liminf_{n \to \infty}\frac1n \log\p \left(Y_n^m\in O\right)
	\geqslant -\inf_{u\in O} I_m(O),
\]
where for $u=(u_{1},\ldots,u_m)\in\h^m$
\begin{equation}\label{eq-I_m}
I_m(u):= \frac{1}{m}\sum_{k=1}^m \Lambda^{\ast}(mu_{k})
\end{equation}
and $\Lambda^{\ast}$ is the Legendre transform,
\begin{equation}\label{e.FL} \Lambda^{\ast}(u) = \sup_{v\in\h} \,\left(\langle v,u\rangle_\h - \Lambda(v)\right).
\end{equation}
\end{prop}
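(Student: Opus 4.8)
The plan is to recognize $\{Y_n^m\}$ as a deterministic rescaling and product of ordinary empirical means in the Euclidean space $\h\cong\R^{d_1}$, and then to apply the classical Cram\'er theorem together with the independence across blocks. Concretely, for each fixed $k\in\{1,\ldots,m\}$ the variable $Y_n^{m,k}=\frac1n(X_{n_{k-1}+1}+\cdots+X_{n_k})$ is, up to the scaling factor, a sample mean of $d$ or $d+r$ i.i.d.\ copies of $X_1$, where $d=\lfloor n/m\rfloor$. Writing $\bar X_{d}:=\frac1d\sum_{i=1}^d X_i$, we have $Y_n^{m,k}=\frac{d_k}{n}\bar X_{d_k}$ with $d_k\in\{d,d+r\}$, and since $d_k/n\to 1/m$ and $d/n\to1/m$ as $n\to\infty$, the leading-order exponential behavior is governed by the rate function of $\bar X_d$ scaled appropriately.

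The key steps, in order, are as follows. First, I would invoke Cram\'er's theorem in $\R^{d_1}$: since $\Lambda(\lambda)=\log\E[\exp\langle\lambda,X_1\rangle_\h]$ is finite for all $\lambda\in\h$, the sequence $\{\bar X_k\}_{k}$ satisfies the LDP in $\h$ with speed $k$ and good rate function $\Lambda^{\ast}$ given by \eqref{e.FL} (see e.g.\ \cite[Theorem 2.2.30]{DemboZeitouniBook2010}; finiteness of $\Lambda$ everywhere gives the full LDP with a good rate function). Second, I would handle the change of speed and the scalar factor: along the $k$-th block there are $d_k\sim n/m$ summands, so $\frac1n\log\p(Y_n^{m,k}\in A)=\frac{d_k}{n}\cdot\frac1{d_k}\log\p(\bar X_{d_k}\in \frac{n}{d_k}A)$, and letting $n\to\infty$ (using $d_k/n\to 1/m$, $n/d_k\to m$, and continuity/lower semicontinuity of $\Lambda^{\ast}$ together with its coercivity) yields that the one-block family $\{Y_n^{m,k}\}$ satisfies the LDP in $\h$ with speed $n$ and rate function $u\mapsto \frac1m\Lambda^{\ast}(mu)$. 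Third, since the $m$ blocks $Y_n^{m,1},\ldots,Y_n^{m,m}$ are built from disjoint, hence independent, collections of the $X_i$, I would apply the standard fact that an LDP for independent sequences tensorizes: the product family $Y_n^m=(Y_n^{m,1},\ldots,Y_n^{m,m})$ satisfies the LDP in $\h^m$ (with the product topology) with speed $n$ and rate function the sum of the marginal rates, namely $I_m(u)=\sum_{k=1}^m \frac1m\Lambda^{\ast}(mu_k)$, which is exactly \eqref{eq-I_m}. This gives both the upper bound on closed sets and the lower bound on open sets as stated.

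The main technical obstacle is the bookkeeping around the non-exact block sizes and the scalar prefactor $d_k/n\ne 1/m$: one must check that replacing $\bar X_{d_k}$ (speed $d_k$) by a statement at speed $n$, and absorbing the factor $n/d_k\to m$ inside the event, does not distort the rate function in the limit. This is where one uses that $\Lambda^{\ast}$ is a good (in particular lower semicontinuous, coercive) rate function, so that $\liminf$/$\limsup$ of $\frac{d_k}{n}\Lambda^{\ast}(\frac{n}{d_k}u)$ over shrinking neighborhoods converge to $\frac1m\Lambda^{\ast}(mu)$; a clean way to package this is via the contraction principle applied to the continuous maps $u\mapsto \frac{n}{d_k}u$ together with exponential equivalence of $\frac{d_k}{n}\bar X_{d_k}$ and $\frac1m\bar X_{d_k}$ under the hypotheses, or simply a direct $\ep$-neighborhood estimate. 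The tensorization step itself is routine once the one-block LDP is in hand, since finiteness of $\Lambda$ everywhere makes each marginal rate function good, and goodness is preserved under finite products.
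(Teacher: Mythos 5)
Your proposal is correct, but it organizes the argument differently from the paper, most visibly in the upper bound. You prove a marginal LDP for each block $Y_n^{m,k}$ (classical Cram\'er at speed $d_k$, contraction by $u\mapsto u/m$, a change of speed using $d_k/n\to 1/m$, and an exponential-equivalence step to absorb the mismatch $d_k/n-1/m=O(1/n)$), and then invoke tensorization of LDPs for independent families to pass to $\h^m$. The paper instead works with the full vector $Y_n^m$ at once: for the upper bound it computes the limiting scaled cumulant generating function $\lim_n\frac1n\log\E[\exp(n\langle\lambda,Y_n^m\rangle_{\h^m})]=\frac1m\sum_{k}\Lambda(\lambda_k)$ directly from the product structure and runs a Chernoff/compact-covering argument (Proposition~\ref{prop-ldp-hm}) together with exponential tightness to handle general closed sets; for the lower bound it factors the probability of a product of balls by independence and applies the classical Cram\'er lower bound block by block, which is essentially your tensorization lower bound. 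What your route buys is modularity, since everything reduces to quotable theorems (Cram\'er in $\R^{d_1}$, contraction, exponential equivalence, tensorization); the price is that you must actually verify exponential tightness of each marginal family and the superlinear growth of $\Lambda^{\ast}$ at infinity (both consequences of $\Lambda$ being finite everywhere) to justify the exponential-equivalence step and the closed-set half of the tensorization, and you correctly flag this as the main technical obstacle. The paper's route is more self-contained and sidesteps the block-size bookkeeping on the upper-bound side entirely, since the unequal exponents $n_k-n_{k-1}$ enter only through the limit $\lfloor n/m\rfloor/n\to 1/m$ of the joint log-moment generating function.
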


\begin{proof}
	To prove the upper bound, we first note that by following the proof of the upper bound in the classical Cram\'{e}r's theorem (see for example \cite[p. 37]{DemboZeitouniBook2010}, or Proposition \ref{prop-ldp-hm} in the appendix) one may show that for any closed set $F\subset \h^m$
\begin{align}
& \limsup_{n \to \infty}\frac{1}{n} \log\p \left(Y_n^{m}\in F \right)  \label{e.prop-ldp-hm}
\\
& \leqslant -\inf_{u\in F} \sup_{\lambda\in \mathcal{H}^m} \left\{ \langle \lambda, u\rangle_{\mathcal{H}^m} -\limsup_{n\to\infty}\frac1n \log \E \left[\exp\left(n\langle \lambda, Y_n^{m}\rangle_{\mathcal{H}^m} \right)\right] \right\}.
\notag
\end{align}
	Now for $\lambda=(\lambda_{1},\ldots,\lambda_m)\in\h^m$
\begin{align*}
\E \left[\exp\left(n\langle \lambda, Y_n^{m}\rangle_{\mathcal{H}^m} \right)\right]
		&= \prod_{k=1}^{m}\E \left[\exp\left(n\langle \lambda_{k}, Y_n^{m,k}\rangle_{\mathcal{H}} \right)\right] \\
		&= \prod_{k=1}^{m} \prod_{i=n_{k-1}+1}^{n_{k}}
		\E \left[\exp\left(\langle \lambda_{k}, X_i\rangle_{\mathcal{H}} \right)\right]
		= \prod_{k=1}^{m}\exp( \Lambda(\lambda_{k}))^{n_{k}-n_{k-1}}.
	\end{align*}
Again letting $d=\lfloor\frac{n}{m}\rfloor$ so that $n=dm+r$ for some $r\in\{0,1\ldots,m-1\}$, we have that $n_{k}-n_{k-1}=d$ for all $k\in[m-1]$ and $n_m-n_{m-1}=n-(m-1)d=n-md+d = d+r$. Thus we may write
\[
\E \left[\exp\left(n\langle \lambda, Y_n^{m}\rangle_{\mathcal{H}^m} \right)\right]
	= \left(\prod_{k=1}^{m}\exp(\Lambda(\lambda_{k}))\right)^{d}
			\exp(\Lambda(\lambda_m))^r
\]
and so
\begin{align*}
\limsup_{n\to\infty} \frac1n \log \E \left[\exp\left(n\langle \lambda, Y_n^{m}\rangle_{\mathcal{H}^m} \right)\right]
		&=  \limsup_{n\to\infty}\frac{\lfloor n/m\rfloor}{n}\sum_{k=1}^m \Lambda(\lambda_{k}) 
			+ \frac{r}{2n} \Lambda(\lambda_m) \frac{r}{n} \Lambda(\lambda_m) \\
		&= \frac{1}{m}\sum_{k=1}^m \Lambda(\lambda_{k}).
\end{align*}
So for all $u=(u_{1},\ldots,u_m)\in\h^m$ we have
\begin{align*}
	 & \sup_{\lambda\in \mathcal{H}^m} \left\{ \langle \lambda, u\rangle_{\mathcal{H}^m}  - \limsup_{n\to\infty}\frac1n \log \E \left[\exp\left(n\langle \lambda, Y_n^{m}\rangle_{\mathcal{H}^m} \right)\right] \right\} \\
	&= \sup_{\lambda\in \mathcal{H}^m} \left\{ \sum_{k=1}^m \langle \lambda_{k}, u_{k}\rangle_{\mathcal{H}}  -
		\frac{1}{m}\sum_{k=1}^m \Lambda(\lambda_{k}) \right\} \\
	&= \frac{1}{m} \sup_{\lambda\in \mathcal{H}^m} \left\{ \sum_{k=1}^m \langle \lambda_{k}, mu_{k}\rangle_{\mathcal{H}}  -
		\sum_{k=1}^m \Lambda(\lambda_{k}) \right\} \\
	&\leqslant \frac{1}{m}   \sum_{k=1}^m \sup_{\lambda_{k}\in \mathcal{H}} \left\{\langle \lambda_{k}, mu_{k}\rangle_{\mathcal{H}}  - \Lambda(\lambda_{k}) \right\}
	= \frac{1}{m} \sum_{k=1}^m \Lambda^{\ast}(mu_{k}).
\end{align*}

For the lower bound, as usual it suffices to prove that for any $u=(u_{1},\ldots,u_m)\in\h^m$ and $\varepsilon>0$ we have
\[
\liminf_{n \to \infty}\frac1n \log\p \left(Y_n^m\in B(u,\varepsilon)\right)
	\geqslant - \frac{1}{m}\sum_{k=1}^m \Lambda^{\ast}(mu_{k}).
\]
Choose $\delta>0$ sufficiently small so that
\[
B(u_{1},\delta)\times\cdots\times B(u_m,\delta)\subset B(u,\varepsilon).
\]
Then
\begin{align*}
\log\p \left(Y_n^m\in B(u,\varepsilon)\right)
	&\geqslant \log\p\left(Y_n^{m,1}\in B(u_{1},\delta),\ldots,Y_n^{m,m}\in B(u_m,\delta)\right) \\
	&= \log \prod_{k=1}^m \p\left(Y_n^{m,k}\in B(u_{k},\delta)\right) \\
	&= \sum_{k=1}^m \log\p\left(Y_n^{m,k}\in B(u_{k},\delta)\right).
\end{align*}
	Recall that for each $k\in[m-1]$, $Y_n^{m,k}= \frac{1}{n}(X_{n_{k-1}+1}+\cdots + X_{n_{k-1}+\lfloor n/m\rfloor})$, and so again by the classical Cram\'{e}r's theorem
\[ \liminf_{n \to \infty}\frac1n \log\p\left(Y_n^{m,k}\in B(u_{k},\delta)\right)
	\geqslant -\frac{1}{m}\Lambda^{\ast}(mu_{k}),\]
	and the $k=m$ case can be dealt with similarly as was done in the upper bound case, yielding the desired result.

\end{proof}

\subsection{Exponentially good approximations}\label{ss.approx}

For a fixed $m \in \mathbb{N}$, define the map $\Psi_m:\mathcal{H}^m\to G$ by
\begin{equation}\label{e.Psim}
\Psi_m(u_{1}, \ldots, u_{m}) := \exp(u_{1})\star\cdots \star\exp(u_{m}).
\end{equation}

\begin{example}[Example~\ref{ex.Heisenberg} revisited]
In this case $\Psi_m:\mathcal{H}^m\to G$  can be viewed as the map $\Psi_m:\left( \mathbb{R}^{2d} \right)^m\to \mathbb{R}^{2d} \times \mathbb{R}$ with  $u_{i}=\left( x_{i}, y_{i} \right)$

\[
\Psi_m\left( \left( x_{1}, y_{1}\right), ..., \left( x_{m}, y_{m}\right) \right)=\left( \sum_{i=1}^{n} x_{i}, \sum_{i=1}^{n} y_{i}, \sum_{1\leqslant i< j\leqslant n} \left( x_{i}, y_{i} \right) A \left( x_{j}, y_{j} \right)^{T}\right).
\]
\end{example}
In this section we show that $\{\Psi_m(Y_n^m)\}_{m \in \mathbb{N}}$ are exponentially good approximations of $\{D_{\frac{1}{n}} S_{n}\}$, as in Definition \ref{d.exp-approx},
considering the following cases separately: Carnot groups of step $2$  with random vectors $X_i$ having a sub-Gaussian distribution on $\h$, general Carnot groups where $X_i$ are bounded almost surely, and general Carnot groups where $X_i$ are Gaussian. We will rely on the standard argument that if
\begin{equation}\label{e.standard1}
\E[ e^{\pm\lambda Z}] \leqslant C(|\lambda|),
\end{equation}
for some $\lambda\in\mathbb{R}$, then for any $t>0$
\begin{align}
	\notag \p(|Z|\geqslant t ) &= \p(Z\geqslant t) + \p(-Z\geqslant t) \\
		&\label{e.standard}= \p(e^{\lambda Z} \geqslant e^{\lambda t}) + \p(e^{-\lambda Z} \geqslant e^{\lambda t})
		\leqslant 2e^{-\lambda t}C(|\lambda|)
\end{align}
by Markov's inequality.

As it turns out, the general Gaussian case is the most technically challenging. This is due to the combination of the general group setting and the unbounded support of the Gaussian. We thus start with the other cases, which will allow us to introduce some of the necessary ideas.

\subsubsection{Step $2$ groups with sub-Gaussian distributions}

Recall that a mean zero random variable $X$ is \emph{sub-Gaussian} if there is a $k>0$ such that
\[
\E[e^{\lambda X}] \leqslant e^{k\lambda^2} \qquad \text{ for all } \lambda\in\mathbb{R}.
\]
A random vector in $\mathbb{R}^n$ is \emph{sub-Gaussian} if the one-dimensional marginals $\langle x, X\rangle$ are sub-Gaussian for all $x\in \mathbb{R}^n$.

Before proceeding further, we take the opportunity to further expand on some comments made in the introduction of this paper.

\label{r.comp}\begin{remark}
We look here at the step two case to illustrate our comments in Section~\ref{s.intro} comparing our results with \cite{BaldiCaramellino1999}. Let $X_1,X_2,\ldots$ and $Z_1,Z_2,\ldots$ be independent random variables on $\mathcal{H}$ and $\mathcal{V}$, respectively, and take
\[\mathcal{S}_n := \exp(X_1,Z_1)\star\cdots\star\exp(X_n,Z_n). \]
Then
\[ D_{1/n}\mathcal{S}_n\star(D_{1/n}S_n)^{-1}
	= \left( 0,\frac{1}{n^2}(Z_1+\cdots+Z_n)\right)\]
and so by Proposition~\ref{p.NormEquiv}
\begin{align*}
	P(d_{cc}(D_{1/n}\mathcal{S}_n\star(D_{1/n}S_n)^{-1})>\delta)
		\sim P\left(\frac{c}{n^2}\|Z_1+\cdots+Z_n\|>\delta\right),
\end{align*}
	where on the right hand side this is just the standard Euclidean distance from the center. Suppose the distribution of each random variable $Z$ is sub-Gaussian with parameter $k$. Then $Z_1+\cdots+Z_n$ is sub-Gaussian with parameter $nk$, and so

\begin{align*}
P\left(\frac{c}{n^2}\|Z_1+\cdots+Z_n\|>\delta\right)
	&= P\left(\|Z_1+\cdots+Z_n\|>\frac{\delta n^2}{c}\right)\\
	&\leqslant 2 e^{-\delta n^2\lambda/c}e^{nk\lambda^2}
\end{align*}
for arbitrary $\lambda$.
Keeping $\lambda$ a fixed constant, we may say that
\begin{align*}
	\lim_{n\to\infty} \frac{1}{n} \log P\left(\frac{c}{n^2}\|Z_1+\cdots+Z_n\|>\delta\right)
	&\leqslant \lim_{n\to\infty} \frac{1}{n} \log 2 e^{-\delta n^2\lambda/c}e^{nk\lambda^2}\\
	&= \lim_{n\to\infty} \frac{1}{n} \left(\log 2 -\frac{\delta n^2\lambda}{c} +
		nk\lambda^2\right)
	=-\infty
\end{align*}
	and thus $\{D_{1/n}\mathcal{S}_n\}$ and $\{D_{1/n}S_n\}$ are exponentially equivalent in the sense of Definition \ref{d.exp-equiv}. Therefore, Theorem \ref{t.exp-equiv} implies that, when  $\{D_{1/n}S_n\}$ satisfies an LDP, $\{D_{1/n}\mathcal{S}_n\}$ will satisfy an LDP with the same rate function. Thus, we may compare the application of Theorem \ref{t.main} to the Heisenberg group to \cite[Example 1]{BaldiCaramellino1999}.
\end{remark}

It will also be useful to recall that we say a random variable $X$ is \emph{sub-exponential} with parameters $\nu^2$ and $\alpha$ if
\[ \E[e^{\lambda X}]\leqslant e^{\nu^2\lambda^2/2}, \qquad \text{ for any } |\lambda|<\frac{1}{\alpha}. \]
We will write $X\in SE(\nu^2,\alpha)$. Similarly, a random vector is called \emph{sub-exponential} if it has all sub-exponential marginals. 
For Carnot groups of step 2 as in Example \ref{ex.step2}, the group operation is given by a quadratic polynomials. Thus, if the distribution of the $X_i$ is sub-Gaussian, we will observe a random walk with sub-exponential distributions. This will allow us to apply standard concentration results concerning quadratic forms and sub-exponential distributions in general. For convenience we record the following basic result.

\begin{lemma}\label{l.subexp}
If $Z_i$ are sub-exponential random variables  with parameters $\nu_i^2$ and $\alpha_i$, then the random variable $Z:= \sum_{i=1}^n Z_i$ is also sub-exponential with parameters $\alpha = \max_{1\leqslant i\leqslant n} \alpha_i$ and
	\begin{enumerate}
		\item[(i)] $\nu^2=\sum_{i=1}^n \nu_i^2$ if the $Z_i$s are independent.
		\item[(ii)] $\nu^2=\left(\sum_{i=1}^n \nu_i\right)^2$ if the $Z_i$s are dependent.
	\end{enumerate}
\end{lemma}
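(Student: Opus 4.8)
The plan is to establish the moment generating function bound $\E[e^{\lambda Z}] \leqslant e^{\nu^2\lambda^2/2}$ for $|\lambda| < 1/\alpha$ in each of the two cases, working directly from the definition of sub-exponentiality for the summands. First I would observe that the constraint $|\lambda| < 1/\alpha = 1/\max_i \alpha_i$ guarantees $|\lambda| < 1/\alpha_i$ for every $i$, so each individual bound $\E[e^{\lambda Z_i}] \leqslant e^{\nu_i^2 \lambda^2/2}$ is in force for the relevant range of $\lambda$.

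For part (i), with the $Z_i$ independent, I would factor
\[
\E[e^{\lambda Z}] = \E\Big[\prod_{i=1}^n e^{\lambda Z_i}\Big] = \prod_{i=1}^n \E[e^{\lambda Z_i}] \leqslant \prod_{i=1}^n e^{\nu_i^2\lambda^2/2} = e^{(\sum_{i=1}^n \nu_i^2)\lambda^2/2},
\]
which is exactly the claimed bound with $\nu^2 = \sum_i \nu_i^2$. For part (ii), without independence, I would apply the generalized Hölder inequality: writing $p_i = (\sum_j \nu_j)/\nu_i$ so that $\sum_i 1/p_i = 1$, one gets
\[
\E[e^{\lambda Z}] = \E\Big[\prod_{i=1}^n e^{\lambda Z_i}\Big] \leqslant \prod_{i=1}^n \E[e^{p_i\lambda Z_i}]^{1/p_i}.
\]
Here the subtlety is the range of validity: I need $|p_i \lambda| < 1/\alpha_i$, i.e. $|\lambda| < 1/(p_i\alpha_i)$. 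Since $p_i \geqslant 1$ and $\alpha_i \leqslant \alpha$, we have $p_i \alpha_i$ could exceed $\alpha$, so I should note that the bound for each summand $Z_i$ is assumed with its own parameter $\alpha_i$, and one must argue $p_i\alpha_i \leqslant \sum_j\nu_j \cdot \alpha_i/\nu_i$; more carefully, the cleanest route is to first renormalize so that all summands share the worst parameters, or simply to note that a random variable in $SE(\nu_i^2,\alpha_i)$ is also in $SE(\tilde\nu^2, \alpha)$ for suitable $\tilde\nu$, reducing to a common $\alpha$. Applying the per-summand bound then gives $\E[e^{p_i\lambda Z_i}]^{1/p_i} \leqslant e^{\nu_i^2 (p_i\lambda)^2/(2p_i)} = e^{\nu_i^2 p_i \lambda^2/2} = e^{\nu_i (\sum_j\nu_j)\lambda^2/2}$, and multiplying over $i$ yields $e^{(\sum_j \nu_j)^2\lambda^2/2}$, as claimed.

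The main obstacle I anticipate is bookkeeping the domain of validity of $\lambda$ in the dependent case, since the Hölder exponents $p_i$ inflate the argument of each moment generating function; one needs the $SE$ bounds to hold on a large enough symmetric interval, which is why the statement takes $\alpha = \max_i\alpha_i$ rather than something smaller, and why it is convenient to first coarsen each summand's parameters to a common $\alpha$ before applying Hölder. The independent case is essentially immediate. I would present part (i) in one line and spend the bulk of the argument carefully justifying the Hölder step and its range in part (ii).
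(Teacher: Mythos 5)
Your proposal follows essentially the same route as the paper: part (i) is the one-line factorization of the moment generating function, and part (ii) is H\"older's inequality. The paper writes the two-variable H\"older bound with exponents $p$ and $p/(p-1)$ and then ``optimizes over $p$''; your generalized H\"older with weights $p_i = \bigl(\sum_j \nu_j\bigr)/\nu_i$ is exactly the $n$-variable version of that optimization carried out in one step, which is if anything cleaner.

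The one substantive point is the range-of-$\lambda$ issue you raise, and here you have correctly identified a real gap --- but your proposed fix does not close it. Coarsening every summand to a common $\alpha$ does not help: the H\"older step still requires $|p_i\lambda| < 1/\alpha_i$, so the moment generating function bound for $Z$ is only established on $|\lambda| < \min_i 1/(p_i\alpha_i)$, and since $p_i \geqslant 1$ (with $p_i>1$ whenever $n\geqslant 2$) this interval is strictly smaller than $|\lambda|<1/\max_i\alpha_i$. What the H\"older argument honestly proves is $Z\in SE\bigl(\bigl(\sum_i\nu_i\bigr)^2,\ \max_i p_i\alpha_i\bigr)$, not $SE\bigl(\bigl(\sum_i\nu_i\bigr)^2,\ \max_i\alpha_i\bigr)$. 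The paper's own proof is silent on this and has the identical defect, so you should not feel you have missed an idea the authors had; but if you state the lemma as given, you should either enlarge the $\alpha$ parameter to $\max_i p_i\alpha_i$ or note explicitly that the stated $\alpha$ is not what the H\"older argument delivers. (In the paper's application the parameter $\lambda$ is eventually sent to $0$, so the weaker range still suffices there.)
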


\begin{proof}
The first statement is obvious since $X$ and $Y$ independent implies that
\[
\E[e^{\lambda(X+Y)}]=\E[e^{\lambda X}]\E[e^{\lambda Y}].
\]
The second statement follows from an application of H\"older's inequality
\[
	\E[e^{\lambda(X+Y)}]\leqslant(\E[e^{p\lambda X}])^{1/p}\left(\E\left[e^{p\lambda Y/(p-1)}\right]\right)^{(p-1)/p}\leqslant e^{\nu_X^2p\lambda^2/2}e^{\nu_Y^2p\lambda^2/2(p-1)}
\]
followed by optimization over $p$.
\end{proof}

\begin{prop}\label{p.exp-approx}
Suppose $G$ is step 2 and that $\left\{ X_{i} \right\}_{i=1}^{\infty}$ are $\mathcal{H}$-valued i.i.d.~sub-Gaussian random vectors with mean $0$. Let $Y_n^m$ be as in \eqref{eq-Y-m} and $\Psi_m$ as given in \eqref{e.Psim}. Then for all $\delta>0$
\[
\lim_{m\to\infty} \limsup_{n\to\infty}\frac{1}{n} \log
		\p(\rho_{cc}(\Psi_m(Y_n^m), D_{1/n}S_n))>\delta) = -\infty.
\]
\end{prop}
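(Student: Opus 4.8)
The plan is to compare the genuine walk $D_{1/n}S_n = \prod_{i=1}^n \star \exp(\delta_{1/n}X_i)$ with the $m$-piece approximation $\Psi_m(Y_n^m) = \prod_{k=1}^m \star \exp\bigl(\tfrac1n\sum_{i=n_{k-1}+1}^{n_k} X_i\bigr)$, using the step-$2$ structure. In the step-$2$ case both elements share the same horizontal component, namely $\tfrac1n\sum_{i=1}^n X_i$, because the group law adds horizontal parts exactly; the only discrepancy is in the $V_2$ (vertical) component, where $Q^{(2)}$ is a fixed skew-symmetric bilinear form $Q_j(x,y)=\sum_{k<\ell}\alpha_j^{k,\ell}\omega_{k,\ell}(x^{(1)},y^{(1)})$. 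So first I would write out, using \eqref{e.2.5}, the vertical component of $D_{1/n}S_n$ as $\tfrac1{n^2}\sum_{1\le i<i'\le n} h_i^T A_j h_{i'}$ (with $h_i := X_i$), and the vertical component of $\Psi_m(Y_n^m)$ as $\tfrac1{n^2}\sum_{1\le k<k'\le m} \bigl(\sum_{i\in I_k} h_i\bigr)^T A_j \bigl(\sum_{i'\in I_{k'}} h_{i'}\bigr)$, where $I_k$ is the $k$-th block of indices. Subtracting, the cross-block terms cancel and what remains is $-\tfrac1{n^2}\sum_{k=1}^m \sum_{\substack{i<i',\ i,i'\in I_k}} h_i^T A_j h_{i'}$, i.e. a sum of $m$ independent within-block quadratic forms. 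By Proposition~\ref{p.NormEquiv} (equivalence of $\rho_{cc}$ with the Euclidean structure near $e$) it suffices to show that $\tfrac1n \log \p\bigl(\bigl|\tfrac1{n^2}\sum_{k=1}^m W_k\bigr| > \delta'\bigr) \to -\infty$ as $n\to\infty$ and then $m\to\infty$, where $W_k := \sum_{i<i',\,i,i'\in I_k} h_i^T A_j h_{i'}$ and each block has $\approx n/m$ indices; note that $\rho_{cc}$ scales like the square root of the $V_2$ component, but since we only need a $\limsup = -\infty$ statement this only helps.

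Next I would control each $W_k$ by the standard exponential-moment argument \eqref{e.standard}. The key input is a concentration estimate for quadratic forms of sub-Gaussian vectors (the ``concentration inequalities for quadratic forms'' advertised in the introduction): a Hanson–Wright-type bound showing that for independent mean-zero sub-Gaussian $h_i\in\h$ and a fixed matrix $A$, the random variable $W_k = \sum_{i<i'} h_i^T A h_{i'}$ is sub-exponential with parameters controlled by $|I_k|$ times the operator/Frobenius norm of $A$; concretely $W_k \in SE(c|I_k|, c')$ for constants $c,c'$ depending only on $A$ and the sub-Gaussian norm of $X_1$. Then by Lemma~\ref{l.subexp}(i) (the $W_k$ are independent across $k$), $W := \sum_{k=1}^m W_k \in SE\bigl(cm\cdot\max_k|I_k|, c'\bigr) = SE(Cn, c')$ roughly (since $m \cdot (n/m) = n$). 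Plugging $Z = W/n^2$ into \eqref{e.standard} with $t = n^2\delta'$ and optimizing $\lambda$ in the sub-exponential regime $|\lambda| < n^2/c'$: for large $n$ this optimal $\lambda$ lies in the allowed range, giving $\p(|W| > n^2\delta') \le 2\exp(-c''n^4\delta'^2/(Cn)) = 2\exp(-c'' n^3\delta'^2/C)$. Hence $\tfrac1n\log\p(\cdots) \le -c'' n^2 \delta'^2/C + o(1) \to -\infty$ already as $n\to\infty$, for every fixed $m$, which certainly gives the iterated limit. (One should double-check the within-block diagonal terms $h_i^T A_j h_i$ — these vanish since $A_j$ is skew-symmetric, so there is genuinely no diagonal contribution; this is exactly where step $2$ is used.)

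The main obstacle is establishing the Hanson–Wright-type tail bound in the precise form needed — i.e., that a bilinear form $\sum_{i<i'} h_i^T A h_{i'}$ in independent sub-Gaussian vectors is sub-exponential with the variance proxy scaling linearly in the number of summands $|I_k|$. The decoupling step (replacing $\sum_{i<i'}h_i^T A h_{i'}$ by a decoupled form $\sum_{i,i'} h_i^T A \tilde h_{i'}$ with an independent copy) and the conditional sub-Gaussian/Gaussian chaos estimate are standard but need to be assembled carefully, tracking that the relevant parameter is the number of terms, not the (larger) ambient quantity $n$. Everything else — the cancellation of cross-block terms, the vanishing of diagonal terms by skew-symmetry, the reduction from $\rho_{cc}$ to Euclidean distance via Proposition~\ref{p.NormEquiv}, and the Markov/Chernoff bookkeeping in \eqref{e.standard} — is routine. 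A secondary point to be careful about: the random variable $Z = W/n^2$ and the threshold $\delta'$ must be handled so that the optimal $\lambda$ stays in the sub-exponential window for all large $n$; since the window $|\lambda| < n^2/c'$ grows with $n$ while the optimal $\lambda \sim n^2\delta'/(Cn) = n\delta'/C$, this is satisfied for $n$ large, so no truncation is needed. I would also remark that in higher step the same scheme fails because the vertical components involve iterated brackets of degree $\ge 3$ which do not telescope so cleanly and are not quadratic, which is why Proposition~\ref{p.exp-approx} is stated only for step $2$ and boundedness is imposed in the general case.
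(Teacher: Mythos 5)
Your structural reduction is exactly the paper's: both arguments observe that the horizontal components of $D_{1/n}S_n$ and $\Psi_m(Y_n^m)$ coincide, that in step $2$ the discrepancy lives entirely in $V_2$ and equals $\frac{1}{n^2}\sum_{k=1}^m\sum_{n_{k-1}<i<j\leqslant n_k}X_i^TA X_j$ (the within-block quadratic forms, the cross-block terms cancelling), and that Proposition \ref{p.5.1.4} reduces the $\rho_{cc}$ statement to a Euclidean tail bound on this quantity. The skew-symmetry remark about diagonal terms and the union bound over the $d_2$ vertical coordinates are also as in the paper.

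However, the key concentration input, which you yourself flag as the main obstacle, is stated in a form that is false, and this propagates into a wrong quantitative conclusion. You claim $W_k=\sum_{i<i',\,i,i'\in I_k}h_i^TAh_{i'}\in SE(c|I_k|,c')$, i.e.\ variance proxy \emph{linear} in the block size $\ell:=|I_k|$ with a \emph{constant} sub-exponential scale. But $\mathrm{Var}(W_k)=\binom{\ell}{2}\mathrm{Var}(h_1^TAh_2)\asymp\ell^2$ (the cross terms have zero covariance), and a sub-exponential variance proxy cannot be smaller than the variance; the correct Hanson--Wright scaling is $\nu^2\asymp\ell^2\Vert A\Vert_{HS}^2$ and $\alpha\asymp\ell\Vert A\Vert_{op}$ (the paper settles for the even rougher $\nu^2\asymp\ell^3$, $\alpha\asymp 1$, obtained by slicing $W_k$ along antidiagonals into sums of independent sub-exponentials and applying Lemma \ref{l.subexp}). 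With the correct parameters the Chernoff optimization changes character: for $W=\sum_kW_k\in SE(Cn^2/m,\,Cn/m)$ and threshold $n^2\delta$, the unconstrained optimizer $\lambda^*\asymp \delta m/C$ lies \emph{outside} the admissible window $|\lambda|\lesssim m/(Cn)$ for large $n$, so one must take $\lambda$ at the boundary and obtains only $\frac1n\log\p(\cdot)\leqslant -cm\delta+C$. In particular your assertion that the limit is already $-\infty$ as $n\to\infty$ for each fixed $m$ (with decay $e^{-cn^3}$) is false: a single block of $\ell=n/m$ Gaussian increments produces a second-order chaos whose tail at level $n^2\delta$ decays only like $e^{-cnm\delta}$, so the iterated limit $m\to\infty$ is genuinely needed — and that is precisely what the paper's bound delivers. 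The good news is that once the concentration estimate is corrected, the rest of your bookkeeping closes the proof along the same lines as the paper's.
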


\begin{proof}
Fix $\delta>0$. Recall that by the left invariance of the distance
\[
\rho_{cc}(\Psi_m(Y_n^m),D_{\frac{1}{n}}S_n) =
		\rho_{cc}(e,(\Psi_m(Y_n^m))^{-1}D_{\frac{1}{n}}S_n)
		= d_{cc}((\Psi_m(Y_n^m))^{-1}D_{\frac{1}{n}}S_n).
\]

Consider first the case $d_{2}=\mathrm{dim}[\mathcal{H},\mathcal{H}]=1$, which corresponds to the Heisenberg group, as in Example~\ref{ex.Heisenberg}.
Then by \eqref{e.2.5}
\[
D_{\frac{1}{n}} S_{n} = e^{X_{1}}\star\cdots\star e^{X_n} = \left(\frac{1}{n}\sum_{k=1}^nX_{k},
\frac{1}{n^2}\sum_{1\leqslant i<j\leqslant n} X_{i}^{T}A_{d_{1}+1} X_{j}\right)
\]
and
\begin{align*}
\Psi_m(Y_n^m) &=e^{Y_n^{m,1}}\star\cdots\star e^{Y_n^{m,m}}
\\
&= \left(\frac{1}{n}\sum_{k=1}^nX_{k}, \frac{1}{n^2} \sum_{k=1}^m \sum_{\ell=k+1}^m
	\sum_{i=n_{k-1}+1}^{n_{k}} \sum_{j=n_{\ell-1}+1}^{n_\ell} X_{i}^{T}A_{d_{1}+1}X_j\right). \end{align*}
Thus
\begin{equation*}
	(\Psi_m(Y_n^m))^{-1}\star D_{\frac{1}{n}}S_n
	=\left(0, \frac{1}{n^2}\sum_{k=1}^m \sum_{n_{k-1}<i<j\leqslant n_{k}}
		X_{i}^{T}A_{d_{1}+1} X_{j}\right).
	\end{equation*}
So in light of Proposition \ref{p.5.1.4} it is sufficient to show that
\begin{align*}
\lim_{m\to\infty} \limsup_{n\to\infty}\frac{1}{n} \log \p\left(\frac{1}{n^2}
	\left|\sum_{k=1}^m \sum_{n_{k-1}<i<j\leqslant n_{k}}
		X_{i}^{T}A_{d_{1}+1} X_{j}\right|>\delta\right)
		= -\infty.
\end{align*}

Now, by the comparison lemma \cite[Lemma 6.2.3]{VershyninBook2018}, for independent mean zero sub-Gaussian random vectors $X$ and $X'$ in $\mathbb{R}^{d_{1}}$ with $\Vert X  \Vert_{\psi_{2}} ,\Vert X'  \Vert_{\psi_{2}}\leqslant K$, and any $\lambda \in \mathbb{R}$ and  $d_{1}\times d_{1}$ matrix $A$
\[
\mathbb{E}[\exp\left( \lambda X^{T} A X' \right)]
	\leqslant \mathbb{E}[\exp\left( C_{1}K^{2}\lambda G^{T} A G' \right)]
\]
where $G$ and $G'$ are independent $\mathcal{N}\left( 0, I_{d_{1}} \right)$ random
	vectors and $C_{1}$ is an absolute constant. (Here, $\|\cdot\|_{\psi_{2}}$ is the sub-Gaussian norm, which is necessarily finite when $X$ is sub-Gaussian.)
Furthermore, by \cite[Lemma 6.2.2]{VershyninBook2018} there are absolute constants $C_{2}$ and $c$
\[
	\mathbb{E}[\exp\left( C_{1}K^{2} \lambda G^{T} A G' \right)]  \leqslant \exp\left( C_{2}C_{1} ^2 K^{4} \lambda^2 \Vert A \Vert^2_{HS}\right)
\]
for all $\lambda$  satisfying $\vert \lambda \vert \leqslant c/ C_{1}K^{2}  \Vert A \Vert_{op}$.

We denote by $C$ and $c$ constants that do not depend on the structure of the group or distribution, but might change from one bound to another. Putting the above together we see that for $X$ and $X'$ as given
\[
\mathbb{E}[\exp\left( \lambda X^{T} A X' \right)
	\leqslant  \exp\left( C K^{4} \Vert A \Vert_{HS}^{2}\lambda^{2} \right)],
	\quad \text{ for any } \vert \lambda \vert \leqslant \frac{c}{K^{2}  \Vert A \Vert_{op}}.
\]
This means that $X^{T} A X'$ is sub-exponential with parameters
$\nu^{2}:=2C K^{4} \Vert A \Vert_{HS}^{2}$ and $\alpha:=\frac{K^{2} \Vert A \Vert_{op}}{c}$.

In particular, the above is true for $X=X_i$ and $X'=X_j$ for any $i\ne j$ with  $A=A_{d_{1}+1}$. Now  for any $\ell\geqslant 2$ consider
\begin{align*}
	Z &:= \sum_{1\leqslant i<j\leqslant \ell} X_{i}^{T} A X_{j}
	= \sum_{a=3}^{2\ell-1}\sum_{\substack{1\leqslant i<j\leqslant\ell\\i+j=a}} X_{i}^{T} A X_{j}
	=: \sum_{a=3}^{2\ell-1} Z(a),
\end{align*}
where the second double summation is the same as summing along the \emph{antidiagonals} of the array $(X_{i}^{T} A X_{j})_{1\leqslant i<j\leqslant\ell}$. We will apply Lemma \ref{l.subexp} to recognize this sum as having a sub-exponential distribution and to bound its parameters.
Note in particular that each $Z(a)$ is a sum of \emph{independent} sub-exponential random variables $X_{i}^{T} A X_{j}$ all with common parameters $\nu_{ij}^2=\nu^2$ and $\alpha_{ij}=\alpha$, where $\nu^2$ and $\alpha$ are as given above. Thus, $Z(a)$ is sub-exponential with parameters $\alpha_a=\alpha$ and
\[
\nu_a^2 = \sum_{\substack{1\leqslant i<j\leqslant\ell\\i+j=a}} \nu^2 \leqslant \frac{\ell}{2}\nu^2. \]
Therefore, as a sum of \emph{dependent} sub-exponential random variables, $Z=\sum_{a=3}^{2\ell-1} Z(a)$ is sub-exponential with parameters $\alpha_Z=K^2\|A\|_{op}/c$, and we may make the following (rough) estimate
\[
\nu_Z^2 = \left(\sum_{a=3}^{2\ell-1}\nu_a\right)^2
	\leqslant (2\ell-3)^2\frac{\ell}{2}\nu^2
	\leqslant \ell^3C K^{4} \Vert A \Vert_{HS}^{2}.
\]
Applying this now to
\[
Z_{k}:=\sum_{n_{k-1}<i<j\leqslant n_{k}} X_{i}^{T} A X_{j},
\]
for any $k\in [m]$, we have $Z_{k} \in SE(d^3C K^{4} \Vert A \Vert_{HS}^{2},K^2\|A\|_{op}/c)$, where again $d=\lfloor n/m\rfloor$ (and $n_{k}-n_{k-1}=d$ for $k\in[m-1]$ and $n_m-n_{m-1}=d+r$ for some $r \in[m-1]$). Since $Z_{k}$ and $Z_{k'}$ for $k\not= k'$ are sums over non-overlapping subsets of indices, we have that the $Z_{k}$'s are also independent, so
\[
\sum_{k=1}^{m}Z_{k}\in SE\left(md^{3}CK^{4} \Vert A \Vert_{HS}^{2},\frac{K^{2} \Vert A \Vert_{op}}{c}\right).
\]

Thus by \eqref{e.standard} for any $\delta>0$ and $0< \lambda\leqslant \frac{c}{K^{2}  \Vert A \Vert_{op}}$
\begin{align*}
& \mathbb{P}\left(\frac{1}{n^{2}} \left\vert \sum_{k=1}^{m}Z_{k}\right\vert > \delta \right)
\leqslant
2e^{- \lambda \delta n^{2}}e^{ md^{3}CK^{4} \Vert A \Vert_{HS}^{2}\lambda^{2}}.
\end{align*}
In particular, this is true for any $\lambda= 1/d=1/\lfloor n/m\rfloor$ for sufficiently large $n$ and thus
\begin{align*}
\lim_{m\to\infty} \limsup_{n\to\infty}\frac{1}{n} \log  \mathbb{P}&\left(\frac{1}{n^{2}} \left\vert \sum_{k=1}^{m}\sum_{n_{k-1}<i<j\leqslant n_{k}} X_{i}^{T} A X_{j}\right\vert > \delta \right)
	\\
	&= \lim_{m\to\infty} \limsup_{n\to\infty} \frac{1}{n} \log  \mathbb{P}\left(\frac{1}{n^{2}} \left\vert \sum_{k=1}^{m}Z_{k}\right\vert > \delta \right) \\
	&\leqslant \lim_{m\to\infty} \limsup_{n\to\infty}\frac{1}{n} \log \left(2e^{- m\delta n}
		e^{ mdCK^{4} \Vert A \Vert_{HS}^{2}}\right) \\
	&=\lim_{m\to\infty}\limsup_{n\to\infty}
		\left( - \delta m+CK^{4} \Vert A \Vert_{HS}^{2}\right)
		=-\infty.
\end{align*}

This essentially completes the proof, since in the case that ${d_{2}}= \mathrm{dim}[\mathcal{H},\mathcal{H}]>1$ and  $(\Psi_m(Y_n^m))^{-1}\star D_{\frac{1}{n}}S_n = (0,Z_n^1,\ldots,Z_n^{d_{2}})$ with
\[
Z_n^\ell = \frac{1}{n^2}\sum_{k=1}^m \sum_{n_{k-1}< i<j\leqslant n_{k}} X_{i}^{T} A_{d_{1}+\ell} X_{j},
\]
we have that
\begin{align*}
\p\left(\left\|(Z_n^1,\ldots,Z_n^{d_{2}})\right\|_{\R^{d_{2}}}>\delta\right)
	&= \p\left(\sum_{\ell=1}^{d_{2}} (Z_n^\ell)^2>\delta^2\right) \\
	&\leqslant \sum_{\ell=1}^{d_{2}} \p\left( (Z_n^\ell)^2>\frac{\delta^2}{{d_{2}}}\right)
	= \sum_{\ell=1}^{d_{2}} \p\left( |Z_n^\ell|>\frac{\delta}{\sqrt{{d_{2}}}}\right)
\end{align*}
and the result follows by applying the previous estimates to each term.
\end{proof}
Note that Appendix~\ref{a.RSymplF} describes properties of random quadratic forms. If $G$ is of step $3$ or higher, we need to rely on concentration inequalities for polynomials of random vectors of higher order which are not easily available.

\subsubsection{Higher step groups with bounded distributions}

Suppose $G$ is a homogeneous Carnot group of step $r$ as described in Section~\ref{s.carnot}.
In particular, as before we identify both the Lie group $G$ and its Lie algebra $\mathfrak{g}$ with $\mathbb{R}^{N}$.

First we will need the next simple lemma for our estimates.
\begin{lemma}\label{l.Jl} Recall the notation \eqref{eq-step-cord}. For $\ell=2,\ldots,r$
	\[ (D_{\frac{1}{n}}S_n-\Psi_m(Y_n^m))^{(\ell)}
		= \frac{1}{n^\ell}\sum_{i=(i_{1},\ldots,i_\ell)\in \mathcal{J}'_\ell} c_i \operatorname{ad}_{X_{i_{1}}}\cdots \operatorname{ad}_{X_{i_{\ell-1}}}
			X_{i_\ell} \]
	for some coefficients $|c_i|<1$, where $\mathcal{J}'_\ell$ is some strict subset of $\{1,\ldots,n\}^\ell$ satisfying $\#\mathcal{J}'_\ell\leqslant C \frac{n^\ell}{m}$ where $C$ is a constant that only depends on $\ell$.
\end{lemma}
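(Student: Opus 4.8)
The plan is to expand both $D_{1/n}S_n$ and $\Psi_m(Y_n^m)$ into iterated brackets of the $X_a$'s and show that the two expansions agree coefficient-by-coefficient except on an index set of size $O(n^\ell/m)$. First I would reduce to a ``free'' computation: let $\mathfrak f$ be the Lie algebra of the free Carnot group of step $r$ on generators $\xi_1,\dots,\xi_n$, with the canonical homomorphism $\Phi\colon\mathfrak f\to\mathfrak g$, $\xi_a\mapsto X_a$. Writing $B_k:=\{n_{k-1}+1,\dots,n_k\}$ and using $\X_a/n=\delta_{1/n}\X_a$ for $\X_a\in V_1$, the dilation identities \eqref{e.Dilations} give $D_{1/n}S_n=D_{1/n}\bigl(e^{X_1}\star\cdots\star e^{X_n}\bigr)$ and $\Psi_m(Y_n^m)=D_{1/n}\bigl(e^{\sum_{a\in B_1}X_a}\star\cdots\star e^{\sum_{a\in B_m}X_a}\bigr)$. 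Since $dD_{1/n}$ acts on $V_\ell$ by $n^{-\ell}$, it suffices to produce the required left-nested expansion for $P^{(\ell)}-Q^{(\ell)}\in\mathfrak f$, where $P:=\log(e^{\xi_1}\star\cdots\star e^{\xi_n})$ and $Q:=\log(e^{\xi_{B_1}}\star\cdots\star e^{\xi_{B_m}})$ with $\xi_{B_k}:=\sum_{a\in B_k}\xi_a\in\h$, and then apply $\Phi$ and rescale by $n^{-\ell}$.

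Next I would fix the left-nested normal form supplied by Lemma~\ref{l.Il} (a Hall/Lyndon-type basis of $\mathfrak f$ of brackets $\operatorname{ad}_{\xi_{b_1}}\cdots\operatorname{ad}_{\xi_{b_{\ell-1}}}\xi_{b_\ell}$ indexed by words $b$ in a set $\mathcal J_\ell^{(n)}\subsetneq\{1,\dots,n\}^\ell$), so that $P^{(\ell)}=\sum_{b\in\mathcal J_\ell^{(n)}}c_b\operatorname{ad}_{\xi_{b_1}}\cdots\xi_{b_\ell}$ with $|c_b|<1$. Let $\pi\colon\{1,\dots,n\}\to\{1,\dots,m\}$ be the block map; it is weakly order-preserving. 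Call a word $b\in\{1,\dots,n\}^\ell$ \emph{good} if $\pi$ is injective on its set of letters, i.e.\ no two distinct letters of $b$ lie in a common block. The key claim is that $P^{(\ell)}$ and $Q^{(\ell)}$ have the same normal-form coefficient on every good basis word. To see this, restrict $P$ and $Q$ to a good letter set $L$, i.e.\ apply the homomorphism $\xi_a\mapsto 0$ for $a\notin L$: this replaces $e^{\xi_1}\star\cdots\star e^{\xi_n}$ by $\prod_{a\in L}\star\,e^{\xi_a}$ (increasing order), and since $\pi|_L$ is injective it also sends $\xi_{B_k}$ to $\xi_a$ when $B_k\cap L=\{a\}$ and to $0$ otherwise, so (using that $\pi$ is order-preserving) it replaces $e^{\xi_{B_1}}\star\cdots\star e^{\xi_{B_m}}$ by the \emph{same} product $\prod_{a\in L}\star\,e^{\xi_a}$. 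Hence $P$ and $Q$ restrict to the same element of $\mathfrak f$, so they have the same normal-form coefficient on every basis word with letters in $L$; taking $L$ to be the letter set of an arbitrary good basis word proves the claim. (Here one uses that the chosen normal form restricts to a normal form on sub-alphabets, a standard property of Hall/Lyndon bases.)

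It follows that $P^{(\ell)}-Q^{(\ell)}$, in normal form, is supported on the set $\mathcal J'_\ell$ of \emph{non-good} basis words, with coefficient on $b$ equal to the difference of the $P^{(\ell)}$- and $Q^{(\ell)}$-coefficients; the former is $<1$ by Lemma~\ref{l.Il}, the latter is bounded by a constant depending only on $\ell$ after expanding $Q^{(\ell)}=\sum_k c_k\operatorname{ad}_{\xi_{B_{k_1}}}\cdots\xi_{B_{k_\ell}}$ and reducing to normal form, so $|c'_b|\le C(\ell)$ (and one may split each term into $\le C(\ell)$ copies of coefficient $<1$ if the literal bound $|c'_b|<1$ is wanted). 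For the cardinality,
\[
\#\mathcal J'_\ell\ \le\ \sum_{1\le s<t\le\ell}\#\bigl\{b\in\{1,\dots,n\}^\ell:\ b_s\ne b_t,\ \pi(b_s)=\pi(b_t)\bigr\}\ =\ n^{\ell-2}\binom{\ell}{2}\sum_{k=1}^m|B_k|^2,
\]
and since $|B_k|\le\lfloor n/m\rfloor+m\le 2n/m$ once $n\ge m^2$ we get $\sum_k|B_k|^2\le(\max_k|B_k|)\sum_k|B_k|\le 2n^2/m$, hence $\#\mathcal J'_\ell\le \ell(\ell-1)\,n^\ell/m$; applying $\Phi$ and rescaling by $n^{-\ell}$ finishes the proof. (For the finitely many $n<m^2$ the trivial bound $\#\mathcal J'_\ell\le n^\ell$ is enough, as only $n\to\infty$ with $m$ fixed is used later.)

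The step I expect to be the main obstacle is the key claim — specifically, pinning down that the BCDH normal-form coefficients depend only on the order pattern of the index word, i.e.\ that the left-nested basis of Lemma~\ref{l.Il} behaves functorially under restriction to sub-alphabets; the rest is bookkeeping and counting. If one wants to bypass this point entirely, an alternative is a telescoping argument: write $D_{1/n}S_n=\prod_{k=1}^m\star\,P_k$ with $P_k:=\prod_{a\in B_k}\star\,e^{X_a/n}=e^{W_k+R_k}$ (so $W_k=\frac1n\sum_{a\in B_k}X_a\in V_1$ and, by Lemma~\ref{l.Il} applied within the $k$-th block, $R_k\in V_{\ge 2}$ has only $O((n/m)^j)$ bracket terms in $V_j$), and $\Psi_m(Y_n^m)=\prod_{k=1}^m\star\,e^{W_k}$; replacing $P_k$ by $e^{W_k}$ one block at a time, the $j$-th difference is, by BCDH, a sum of brackets each containing at least one component of $R_j$, and a direct count of the resulting expansions into $X_a$-brackets gives $O(n^\ell/m^2)$ left-nested monomials per step and thus $O(n^\ell/m)$ in total.
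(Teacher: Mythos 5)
Your proposal is correct, and it takes a genuinely different (and more careful) route than the paper. The paper's own proof is two sentences: the form of the difference is attributed to the BCDH formula together with the dilation, and the cardinality bound to the containment of $\mathcal J'_\ell$ in the set of words $i$ with \emph{all} of $i_1,\dots,i_\ell$ lying in a single block $\{n_{k-1}+1,\dots,n_k\}$, a set of size $\leqslant m\cdot C(n/m)^\ell\leqslant Cn^\ell/m$. Your support set is larger — words with at least \emph{two} letters sharing a block — and it is in fact the right one: for $\ell\geqslant3$ the paper's containment fails. For instance, with $B_1=\{1,2\}$, $B_2=\{3\}$ one computes
\[
\bigl(\log(e^{\xi_1}e^{\xi_2}e^{\xi_3})-\log(e^{\xi_1+\xi_2}e^{\xi_3})\bigr)^{(3)}
=\tfrac1{12}[\xi_1,[\xi_1,\xi_2]]-\tfrac1{12}[\xi_2,[\xi_1,\xi_2]]+\tfrac14[[\xi_1,\xi_2],\xi_3],
\]
and the last term has letter multidegree $(1,1,1)$, so it spans two blocks and cannot be rewritten as a combination of single-block brackets. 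The lemma itself is unaffected, since your set is still of size $O(\ell^2 n^\ell/m)$, exactly as you count. Your justification of the vanishing on good words (restriction homomorphisms $\xi_a\mapsto0$ on the free nilpotent algebra, plus the fact that the block map is order-preserving) is sound; the point you flag as delicate is best handled not via properties of a Hall basis but via the multigrading of the free Lie algebra by letter multidegree: the components of $P-Q$ whose multidegree is supported on a good letter set vanish by your restriction argument, and each surviving component can be put in left-nested form (Dynkin--Specht--Wever), producing only words in your $\mathcal J'_\ell$. The telescoping alternative you sketch reaches the same support set and count more directly and avoids the normal-form discussion altogether; either version is a legitimate — indeed corrected — replacement for the paper's argument.
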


\begin{proof}
	As with Lemma \ref{l.Il}, the form of $(D_{\frac{1}{n}}S_n-\Psi_m(Y_n^m))^{(\ell)}$ follows from the Baker-Campbell-Hausdorff-Dynkin formula \eqref{e.BCDH} along with the definition of the dilation. So we only need to  prove the bound on $\#\mathcal{J}'_\ell$. However, we may note that
	
\[ 
\mathcal{J}'_\ell\subseteq\{i=(i_{1},\ldots,i_\ell): i_{1},\ldots,i_\ell\in\{n_{k-1}+1,\ldots, n_{k}\} \text{ for some } k \in[m]\}, 
\] 
and so $\#\mathcal{J}'_\ell\leqslant m\cdot C(n/m)^\ell \leqslant C \frac{n^\ell}{m} $.
\end{proof}

Clearly the estimate in the lemma above on the number of terms appearing in the sum is rough, but it is sufficient for our purposes. We are now able to prove that under these conditions $\{\Psi_m(Y_n^m)\}$ are exponentially good approximations to $\{D_{\frac{1}{n}}S_n\}$.

\begin{prop}\label{p.exp-approx-higher}
Suppose that $\left\{ X_{i} \right\}_{i=1}^{\infty}$ are i.i.d.~mean 0 bounded random vectors in $\mathcal{H}$. Then for all $\delta>0$
\[
\lim_{m\to\infty} \limsup_{n\to\infty}\frac{1}{n} \log \p(\rho_{cc}(\Psi_m(Y_n^m),D_{\frac{1}{n}}S_n)>\delta) = -\infty.
\]
\end{prop}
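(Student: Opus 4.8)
The plan is to reduce everything to a \emph{deterministic} estimate, using the boundedness of the increments together with the combinatorial bound of Lemma~\ref{l.Jl}; no concentration inequality is needed. By left invariance of $\rho_{cc}$ we have $\rho_{cc}(\Psi_m(Y_n^m),D_{\frac1n}S_n)=d_{cc}((\Psi_m(Y_n^m))^{-1}\star D_{\frac1n}S_n)$. First I would observe that if $|X_i|_\h\leqslant M$ for all $i$, then since $\exp(\delta_{\frac1n}X_i)=\exp(\tfrac1n X_i)$ for horizontal $X_i$ and $d_{cc}(\exp v)\leqslant |v|_\h$ for $v\in\h$, the triangle inequality for the left-invariant distance gives $d_{cc}(D_{\frac1n}S_n)\leqslant\sum_{i=1}^n\tfrac1n|X_i|_\h\leqslant M$ and likewise $d_{cc}(\Psi_m(Y_n^m))\leqslant\sum_{k=1}^m|Y_n^{m,k}|_\h\leqslant M$, for every $n,m$ and every realization of $\{X_i\}$. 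Hence both points lie in the fixed compact set $K:=\{x\in G:d_{cc}(x)\leqslant 2M\}$, and by Proposition~\ref{p.NormEquiv} there is a constant $c_K$ with $\rho_{cc}(\Psi_m(Y_n^m),D_{\frac1n}S_n)\leqslant c_K\|\Psi_m(Y_n^m)-D_{\frac1n}S_n\|_{\R^N}^{1/r}$. So it suffices to prove that for every $\delta>0$, $\lim_{m\to\infty}\limsup_{n\to\infty}\tfrac1n\log\p(\|\Psi_m(Y_n^m)-D_{\frac1n}S_n\|_{\R^N}>\delta)=-\infty$.

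Next I would decompose by strata. The horizontal components of $\Psi_m(Y_n^m)$ and $D_{\frac1n}S_n$ agree (both equal $\tfrac1n\sum_{k=1}^n X_k$), so by equivalence of $\|\cdot\|_{\R^N}$ with $\sum_\ell\|(\cdot)^{(\ell)}\|_{\R^{d_\ell}}$ we have $\|\Psi_m(Y_n^m)-D_{\frac1n}S_n\|_{\R^N}\leqslant C\sum_{\ell=2}^r\|(D_{\frac1n}S_n-\Psi_m(Y_n^m))^{(\ell)}\|_{\R^{d_\ell}}$. For each $\ell$, Lemma~\ref{l.Jl} writes the $\ell$-th stratum component as $\tfrac1{n^\ell}\sum_{i\in\mathcal{J}'_\ell}c_i\,\ad_{X_{i_1}}\cdots\ad_{X_{i_{\ell-1}}}X_{i_\ell}$ with $|c_i|<1$ and $\#\mathcal{J}'_\ell\leqslant Cn^\ell/m$. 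Since $(v_1,\dots,v_\ell)\mapsto\ad_{v_1}\cdots\ad_{v_{\ell-1}}v_\ell$ is $\ell$-linear with norm depending only on $\ell$ and the structure constants of $\mathfrak{g}$, each summand is bounded in $\R^N$ by $C_\ell M^\ell$. Combining these bounds gives the \emph{deterministic} estimate $\|(D_{\frac1n}S_n-\Psi_m(Y_n^m))^{(\ell)}\|_{\R^{d_\ell}}\leqslant\tfrac1{n^\ell}\cdot C\tfrac{n^\ell}{m}\cdot C_\ell M^\ell=C_\ell'M^\ell/m$, valid for all $n$ large enough that the cardinality bound of Lemma~\ref{l.Jl} holds (i.e.\ $n\geqslant m$).

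Summing over $\ell$, for every $\delta>0$ we may then fix $m$ so large that $C\sum_{\ell=2}^rC_\ell'M^\ell/m<\delta$; for all sufficiently large $n$ the event $\{\|\Psi_m(Y_n^m)-D_{\frac1n}S_n\|_{\R^N}>\delta\}$ is therefore empty, so $\p(\cdots)=0$ and $\tfrac1n\log\p(\cdots)=-\infty$. Taking $\limsup_{n\to\infty}$ and then $\lim_{m\to\infty}$ yields the conclusion. The point worth emphasizing is that, in contrast to the step-$2$ case (Proposition~\ref{p.exp-approx}), where genuine concentration estimates for quadratic forms of random vectors were required, here the argument is purely deterministic: boundedness of $\{X_i\}$ collapses the probabilistic estimate entirely, which is precisely why the higher-step result needs bounded rather than merely sub-Gaussian increments. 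Consequently there is no real obstacle once Lemma~\ref{l.Jl} is available; the only points requiring care are the uniform-compactness claim (so that Proposition~\ref{p.NormEquiv} applies with a single constant $c_K$, independent of $n$, $m$, and the realization) and keeping track of the fact that Lemma~\ref{l.Jl} controls the \emph{Euclidean} difference of the two group elements — which is exactly what is used after invoking Proposition~\ref{p.NormEquiv}.
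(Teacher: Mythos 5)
Your proposal is correct, and its skeleton coincides with the paper's: reduce to a Euclidean estimate via compactness and Proposition~\ref{p.NormEquiv}, decompose stratum by stratum (the first stratum cancels exactly), and invoke Lemma~\ref{l.Jl} for the representation of $(D_{\frac1n}S_n-\Psi_m(Y_n^m))^{(\ell)}$ together with the cardinality bound $\#\mathcal{J}'_\ell\leqslant Cn^\ell/m$. Where you genuinely diverge is the final step. The paper keeps the argument probabilistic: it applies a union bound over $i\in\mathcal{J}'_\ell$, uses the a.s.\ bound $\|\operatorname{ad}_{X_{i_1}}\cdots\operatorname{ad}_{X_{i_{\ell-1}}}X_{i_\ell}\|\leqslant M^\ell$ only through the exponential Markov inequality \eqref{e.standard} with $\lambda=n$, and arrives at the per-stratum bound $-\delta m/C+M^\ell$, which tends to $-\infty$ as $m\to\infty$. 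You instead observe that the same ingredients already give the sure bound $\|(D_{\frac1n}S_n-\Psi_m(Y_n^m))^{(\ell)}\|_{\R^{d_\ell}}\leqslant C'_\ell M^\ell/m$, so that for $m$ large the offending event is empty and the probability is identically zero for all large $n$. This is a strictly stronger conclusion obtained more cheaply, and it makes transparent why boundedness (as opposed to sub-Gaussianity) is what the higher-step case exploits; the paper's version retains a quantitative exponential rate for every fixed $m$, but that extra information is not needed for the proposition. Your auxiliary steps are also sound: $d_{cc}(\exp v)\leqslant|v|_\h$ for $v\in\h$ plus left invariance does place both $D_{\frac1n}S_n$ and $\Psi_m(Y_n^m)$ in a fixed $d_{cc}$-ball (the paper gets the analogous compactness from Lemma~\ref{l.Il} in the Euclidean norm), and the direction of the inequality in Proposition~\ref{p.NormEquiv} is used correctly, with $\delta$ rescaled to $(\delta/c_K)^r$.
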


\begin{proof}
Suppose $|X_i|$, $i\ge1$ are a.s.~bounded by some constant $M>1$. First, note that by Lemma \ref{l.Il}
\begin{align*}
		\|D_{\frac{1}{n}}S_n\|_{\mathbb{R}^N} &\leqslant \sum_{\ell=1}^r \frac{1}{n^\ell} \|S_n^{(\ell)}\|_{\R^{d_\ell}}
			\leqslant \sum_{\ell=1}^r \frac{1}{n^\ell}\sum_{i\in \mathcal{J}_\ell} |c_i| \|\operatorname{ad}_{X_{i_{1}}}\cdots \operatorname{ad}_{X_{i_{\ell-1}}}
			X_{i_\ell}\|_{\R^{d_\ell}} \\
			&\leqslant \sum_{\ell=1}^r \frac{1}{n^\ell}  n^\ell (2M)^\ell
			\leqslant r(2M)^r.
\end{align*}
Thus for all $n$, $D_{\frac{1}{n}}S_n$ is in some compact subset of $\mathbb{R}^N$ with diameter depending only on $M$ and $r$, and similarly for $\Psi_m(Y_n^m)$. Thus, by Proposition \ref{p.NormEquiv}, it suffices to prove that
\[
\lim_{m\to\infty} \limsup_{n\to\infty}\frac{1}{n} \log \p(\|D_{\frac{1}{n}}S_n-\Psi_m(Y_n^m)\|_{\mathbb{R}^N}>\delta) = -\infty,
\]
or rather that, for each $\ell=2,\ldots, r$,
\[
\lim_{m\to\infty} \limsup_{n\to\infty}\frac{1}{n} \log \p(\|(D_{\frac{1}{n}}S_n-\Psi_m(Y_n^m))^{(\ell)}\|_{\R^{d_\ell}}>\delta) = -\infty.
\]
	So fix $\ell\in\{2,\ldots,r\}$. By Lemma \ref{l.Jl} we have that
\begin{align*}
	\p(\|(D_{\frac{1}{n}}S_n-\Psi_m(Y_n^m))^{(\ell)}\|_{\R^{d_\ell}}>\delta)
	 &= \p\left(\frac{1}{n^\ell}\left\|\sum_{i\in \mathcal{J}'_\ell} c_i \operatorname{ad}_{X_{i_{1}}}\cdots \operatorname{ad}_{X_{i_{\ell-1}}} X_{i_\ell}\right\|_{\R^{d_\ell}}>\delta\right) \\
	 &\leqslant \p\left(\frac{1}{n^\ell} \sum_{i\in \mathcal{J}'_\ell}
	 	\left\| \operatorname{ad}_{X_{i_{1}}}\cdots \operatorname{ad}_{X_{i_{\ell-1}}}
			X_{i_\ell}\right\|_{\R^{d_\ell}}>\delta\right) \\
	 &\leqslant \sum_{i\in \mathcal{J}'_\ell} \p\left(\frac{1}{n^\ell} \left\| \operatorname{ad}_{X_{i_{1}}}\cdots \operatorname{ad}_{X_{i_{\ell-1}}}
			X_{i_\ell}\right\|_{\R^{d_\ell}}>\frac{\delta m}{Cn^\ell}\right)
\end{align*}
	
Since the distribution of the $X_i$s is bounded in $\h$, there exists an $M$ such that $\p(|X_i|_\h\geqslant M)=0$. Since $\Vert \operatorname{ad}_{X}Y \Vert \leqslant 2 |X|_\h|Y|_\h  $, this implies that for any multi-index $i\in \mathcal{J}'_\ell$
\[
\p\left( \|\operatorname{ad}_{X_{i_{1}}}\cdots \operatorname{ad}_{X_{i_{\ell-1}}}
			X_{i_\ell}\|_{\R^{d_\ell}}\geqslant \frac{M^\ell}{2^\ell}\right) =0.
\]
Thus $\| \operatorname{ad}_{X_{i_{1}}}\cdots \operatorname{ad}_{X_{i_{\ell-1}}}
			X_{i_\ell}\|_{\R^{d_\ell}}$ satisfies \eqref{e.standard1} with $C(\lambda) = e^{M^\ell\lambda}$ for any $\lambda>0$ for each $i\in \mathcal{J}'_\ell$, and it follows by
\eqref{e.standard} that
\begin{align*}
	\sum_{i\in \mathcal{J}'_\ell} \p\left( \| \operatorname{ad}_{X_{i_{1}}}\cdots \operatorname{ad}_{X_{i_{\ell-1}}}
			X_{i_\ell}\|_{\R^{d_\ell}}>\frac{\delta m}{C}\right)
		&\leqslant  C\frac{n^\ell}{m} \cdot 2 \exp\left(-\lambda \frac{\delta m}{C}\right)\exp(M^\ell\lambda).
\end{align*}
In particular, this is true for $\lambda =n$ for any $n$. Thus,
\[
\frac{1}{n}\log \p(\|(D_{\frac{1}{n}}S_n-\Psi_m(Y_n^m))^{(\ell)}\|_{\R^{d_\ell}}>\delta)
		\leqslant \frac{1}{n}\log \left(2C \,\frac{n^\ell}{m}
			\exp\left(- n\frac{\delta m}{C}\right)\exp(M^\ell n)\right).
\]
Putting this all together gives
\begin{align*}
	\limsup_{n\to\infty} \frac{1}{n}\log \p(\|(D_{\frac{1}{n}}S_n-\Psi_m(Y_n^m))^{(\ell)}\|_{\R^{d_\ell}}>\delta)
	&\leqslant \limsup_{n\to\infty} \frac{1}{n}\left( - n\frac{\delta m}{C} + M^\ell n\right) \\
	&= - \frac{\delta m}{C} + M^\ell
\end{align*}
and taking $m\to\infty$ completes the proof.
\end{proof}
\subsubsection{Higher step groups with a Gaussian distribution} Again, we let $G$ be a homogeneous Carnot group of step $r$ and identify it and its Lie algebra $\mathfrak{g}$ with $\mathbb{R}^{N}$.
We'll now prove that, when $\{X_i\}_{i=1}^n$ are i.i.d.~standard Gaussian random variables on $\mathcal{H}$,  $\{\drw\}$ form exponentially good approximations of $\{\rw\}$.

\begin{prop} \label{prop:mainprop}
	Suppose that $\left\{ X_{i} \right\}_{i=1}^{\infty}$ are i.i.d.~$\mathcal{N}(0,\mathrm{Id}_\h)$. Then for all $\delta > 0$
	\begin{align*}
		\lim\limits_{m\to \infty}\limsup\limits_{n\to\infty} \frac{1}{n}\log\p\left(\rho_{\mathrm{cc}}\left(\rw,\drw\right)  \geqslant \delta\right) = -\infty.
	\end{align*}
\end{prop}
Before explaining the proof of Proposition \ref{prop:mainprop} we first introduce some  specific conventions and notation. If $E$ is an inner product space, $\|\cdot\|_E$ will always stand for the Hilbertian norm on $E$. When the space is clear from the context we will abbreviate to $\|\cdot\|$. The notations $C, C^{\prime},C_1,C_2,\ldots$ will stand for constants which may depend on $G$, but not on $n$ and $m$.  

We also introduce the following projection operators which somewhat streamline and refine the projections induced by the stratification, as in \eqref{e.Stratification}.
\begin{notation}\label{n.Projections}
	For any step $r$ homogeneous Carnot group $G \cong \R^N$ and element $x = (x^{(1)},\dots,x^{(r)}) \in G$, if $\ell \in [r],$ we denote by $\Pi_{\ell}: \mathbb{R}^{N} \longrightarrow \mathbb{R}^{d_{\ell}}$ the projection onto the $\ell^{\mathrm{th}}$ step of $G$
	\begin{align*}
		\Pi_{\ell}x:= x^{(\ell)}=\left(  x_{d_{1}+\cdots+d_{\ell-1}+1}, \dots ,  x_{d_{1}+\cdots+d_{\ell}}\right).
	\end{align*}
	Moreover, for any $j \in [d_\ell]$,  $\Pi^j_{\ell}: \mathbb{R}^{N} \longrightarrow \mathbb{R}$ will be the projection onto the $j^{\mathrm{th}}$ coordinate of $\mathrm{Image}(\Pi_\ell)$,
	\begin{align*}
		\Pi_{\ell}^{j}x:=x_{d_{1}+\cdots+d_{\ell-1}+j}.
	\end{align*}
\end{notation}

With the above conventions, our proof of Proposition \ref{prop:mainprop} relies on the following result, specializing to separate Euclidean approximations on each step of $G$.
\begin{prop} \label{prop:stepapprox}
	Let $\delta > 0$ and $\ell \in [r]$. It holds that
	\begin{align*}
		\lim\limits_{m\to \infty}\limsup\limits_{n\to\infty} \frac{1}{n}\log\p\left(\left\|\Pi_\ell\left((\drw)^{-1}\star\rw \right)\right\|  \geqslant \delta\right) = -\infty.
	\end{align*}
	
\end{prop}

Given Proposition \ref{prop:stepapprox}, we may now prove Proposition \ref{prop:mainprop}.

\begin{proof}[Proof of Proposition \ref{prop:mainprop}]
	First, the Carnot-Carath\'{e}odory metric is left-invariant, thus
	\[
	\rho_{\mathrm{cc}}\left(\rw,\drw\right) = \rho_{\mathrm{cc}}\left((\drw)^{-1}\star\rw,0\right).
	\]
	Second, since all homogeneous norms on $G$ are equivalent by Proposition \ref{p.5.1.4}, we have
	\[
	\rho_{\mathrm{cc}}\left((\drw)^{-1}\star\rw,0\right) \leqslant \left(C\sum\limits_{\ell = 1}^r \left\|\Pi_\ell\left((\drw)^{-1}\star\rw \right)\right\|^{\frac{2r!}{\ell}}\right)^{\frac{1}{2r!}}.
	\]
	So,
	\begin{align*}
		\p\left(\rho_{\mathrm{cc}}\left(\rw,\drw\right)\geqslant \delta\right) &\leqslant  \p\left(\sum\limits_{\ell=1}^r\left\|\Pi_\ell\left((\drw)^{-1}\star\rw \right)\right\|^{\frac{2r!}{\ell}}\geqslant \delta^{2r!}/C\right)
		\\
		& \leqslant \sum\limits_{\ell=1}^r\p\left(\left\|\Pi_\ell\left((\drw)^{-1}\star\rw \right)\right\|\geqslant \frac{\delta^\ell}{(Cr)^{\frac{\ell}{2r!}}}\right).
	\end{align*}
	The result follows by applying Proposition~\ref{prop:stepapprox} to each summand separately with $\delta$ replaced by $\frac{\delta^\ell}{(Cr)^{\frac{\ell}{2r!}}}$, and proceeding in a similar fashion to the proofs of Propositions \ref{p.exp-approx} and \ref{p.exp-approx-higher}.
\end{proof}

So we henceforth focus our attention on the proof of Proposition \ref{prop:stepapprox}. This requires understanding the group operation on each step of $G$. In light of \eqref{e.SymplForm} we introduce the following polynomial functionals in Carnot groups.

\begin{definition}[Polynomials in Carnot groups]
	Let $G$ be a Carnot group of step $r$.
	\begin{itemize}
		
		\item Let $\beta,\beta^{\prime} \in [r]$. A function $\omega:\R^N\times \R^N \to \R$  is called a \emph{$(\beta, \beta^{\prime})$-symplectic form} if there exist $j_\beta \in [d_\beta]$ and $j_{\beta^{\prime}} \in [d_{\beta^{\prime}}]$ such that 
		\begin{equation} \label{eq:symlectic}
			\omega(x,y) = (\Pi^{j_\beta}_{\beta}x) (\Pi^{j_{\beta^{\prime}}}_{\beta^{\prime}}y) - (\Pi^{j_\beta}_{\beta}y) (\Pi^{j_{\beta^{\prime}}}_{\beta^{\prime}}x).
		\end{equation}
		\item Let $\alpha \in [r]$. A function $P:\R^N\times \R^N \to \R$ is called an \emph{$\alpha$-monomial} if it is of the form
		\begin{equation} \label{eq:monomial}
			P(x,y) := \prod\limits^{r'}_{i=1} \Pi^{j_i}_{\ell_i}z_i,
		\end{equation}
		for some $r' \leqslant r$, and where for each $i$, $z_i \in \{x,y\}$, $\ell_i \in [r]$, and $j_i \in [d_{\ell_i}]$. Moreover, we have the degree constraint $\sum \ell_i = \alpha$.
		\item A pair $(\omega,P)$ of a $(\beta,\beta')$-symplectic form and an $\alpha$-monomial is called an \emph{$\ell$-homogeneous pair} if 
		\begin{equation} \label{eq:degsmatch}
			\ell = \alpha + \beta +\beta'.
		\end{equation}
	\end{itemize}
\end{definition}

With these definitions, the main observation is that the group operation can be decomposed into a linear part and a homogeneous part. Formally, for every $\ell \in [r]$, $j \in [d_\ell]$, and $x,y \in \R^N$, we rewrite \eqref{e.3.3} and \eqref{e.SymplForm} as,
\begin{equation} \label{eq:groupopdecomp}
	\Pi^j_\ell (y^{-1}\star x) = \Pi^j_\ell(x-y) + \sum\limits_{k=1}^{K_\ell^j}\omega^k(x,-y) P^k(x,-y).
\end{equation}
Here $K_\ell^j$ is some fixed number and for every $k \in [K_\ell^j]$, $(\omega^k, P^k)$ is an $\ell$-homogeneous pair. 

We shall handle the linear part and the polynomial part in \eqref{eq:groupopdecomp} separately. Towards this we require the following concentration inequalities for polynomials in Gaussian variables, which follow from hypercontractivity. The proof of which can be found in \cite[Chapters 5 and 6]{JansonBook1997}.
\begin{lemma}[Theorem 5.10 and Theorem 6.7 with Remark 6.8 in \cite{JansonBook1997}]\label{lem:gausspolyconc}
	Let $X_1,\dots,X_n$ be i.i.d. standard Gaussian random variables in some Euclidean space $E$ and let $Q:E \to \R$ be a degree-$p$ polynomial. Then
	\begin{itemize}
		\item[(1).] For any $q \geqslant 1$,
		\[
		\left(\E\left[|Q(X_1,\dots,X_n)|^q\right]\right)^{\frac{1}{q}} \leqslant C_{q,p}\sqrt{\E\left[|Q(X_1,\dots,X_n)|^2\right]},
		\]
		where $C_{q,p}>0$ depends only on $p$ and $q$.
		\item[(2).]  For and $\delta > 0$,
		\[
		\p\left(|Q(X_1,\dots,X_n)| \geqslant \delta\right)\leqslant \exp\left(-C_p\left(\frac{\delta}{\sqrt{\E\left[|Q(X_1,\dots,X_n)|^2\right]}}\right)^{\frac{2}{p}}\right),
		\]
		where $C_p>0$ depends only on $p$.
	\end{itemize}
\end{lemma}
{\bf Bounding the linear part:} Our task of bounding the elements appearing in \eqref{eq:groupopdecomp} starts with the linear part, for which we will need the following second moment estimates.
\begin{lemma} \label{lem:variances}
	Let $\ell \in [r]$. It holds that
	\begin{enumerate}
		\item 
		\[
		\E\left[\|\Pi_\ell \rw\|^2\right], \E\left[\|\Pi_\ell \drw\|^2\right] \leqslant \frac{C}{n^{\ell}}.
		\]
		\item 
		\[
		\E\left[\|\Pi_\ell\left(\rw - \drw\right)\|^2\right] \leqslant \frac{C}{mn^{\ell}}.
		\]
	\end{enumerate}
\end{lemma}
\begin{proof}
	For the first claim, by the Baker-Campbell-Hausdorff-Dynkin formula, \eqref{e.BCDH}, we have
	\[
	\Pi_\ell \rw = \frac{1}{n^\ell}\sum\limits_{I\in [n]^\ell} c_I\ad_{X_{I_1}}\dots\ad_{X_{I_{\ell-1}}}X_{I_\ell},
	\]
	where $|c_I| \leqslant 1$ and $I=(I_1,\dots I_\ell)$. For $I \in [n]^\ell$ we abbreviate
	\[
	X_I := \ad_{X_{I_1}}\dots\ad_{X_{I_{\ell-1}}}X_{I_\ell}, 
	\]
	so that,
	\begin{equation} \label{eq:2ndmoment}
		\E\left[\|\Pi_\ell \rw\|^2\right] \leqslant \frac{1}{n^{2\ell}}\sum\limits_{I,J \in [n]^\ell}\left|\E\left[X_I\cdot X_J\right]\right|.
	\end{equation}
	Now, for $I,J \in [n]^\ell$, consider the multiset, 
	$$I \cup J := \{I_1,\dots,I_\ell,J_1,\dots, J_\ell\},$$
	and suppose that there exists $k \in I \cup J$ which appears an odd number of times. In this case, by the symmetry of standard Gaussian random variables, and the bi-linearity of the Lie brackets
	\begin{equation*}
		\E\left[X_I\cdot X_J\right] = -\E\left[X_I\cdot X_J\right] \implies \E\left[X_I\cdot X_J\right] = 0.
	\end{equation*} 
	Thus, every non-zero summand in \eqref{eq:2ndmoment} must satisfy that every element in $I\cup J$ appears at least twice. Since there are at most $Cn^\ell$ such pairs we conclude
	\[
	\E\left[\|\Pi_\ell \rw\|^2\right] \leqslant \frac{1}{n^{2\ell}}Cn^\ell = \frac{C}{n^\ell}.
	\]
	In the above we have used the fact that the distribution of $X_I \cdot X_J$ only depends on the number of identical elements and their positions. Since there is a finite, in $\ell$, number of such combinations, we have that
	\[
	\E\left[X_I\cdot X_J\right] \leqslant C,
	\]
	where $C>0$ depends only on $\ell$. This concludes the bound of $\E\left[\|\Pi_\ell \rw\|^2\right]$. The proof for $\E\left[\|\Pi_\ell \drw\|^2\right]$ is completely identical, with less relevant pairs in \eqref{eq:2ndmoment}, and we omit it.
	
	For the second part of the claim, we apply the Baker-Campbell-Hausdorff-Dynkin formula to $\Pi_\ell\left(\rw - \drw\right)$, as in Lemma \ref{l.Jl},
	\[
	\Pi_\ell\left(\rw - \drw\right)=\frac{1}{n^\ell}\sum\limits_{I\in \mathcal{I}_\ell} c_I\ad_{X_{I_1}}\cdots\ad_{X_{I_{\ell-1}}}X_{I_\ell},
	\]
	where again $|c_I| \leqslant 1$, and $\mathcal{I}_\ell \subset [n]^\ell$ is such that for every $i \in[n]$, 
	\[
	\left|\{I_{\ell -1}: I \in \mathcal{I}_\ell \text{ and } I_\ell = i\} \right| \leqslant \frac{n}{m}.
	\]
	This property says that, once the last element is chosen, there are at most $\frac{n}{m}$ different choices for the next element, and in particular $|\mathcal{I}_\ell| \leqslant \frac{n^\ell}{m}$.
	As before, it will be enough to count the number of pairs $I, J \in \mathcal{I}_\ell$ such that every element in $I\cup J$ appears an even number of times. Thus, we need to choose at most $n^\ell$ elements. There are $n$ choices for $I_\ell$ and once this element is chosen there are only $\frac{n}{m}$ for $I_{\ell-1}$ and necessarily $I_\ell \neq I_{\ell-1}$. The rest of the elements have at most $n$ choices, and there is a finite, in $\ell$, number of ways to arrange them. Altogether, there are at most $C\frac{n^\ell}{m}$ pairs in $\mathcal{I}_\ell$, for which,
	\[
	\E\left[X_I\cdot X_J\right] \not= 0.
	\]
	The conclusion of the proof is identical to the previous part.
\end{proof}
We can now use Lemma \ref{lem:variances} along with appropriate Gaussian concentration results to bound the linear part in \eqref{eq:groupopdecomp}.
\begin{lemma} \label{lem:differencebound}
	Let $\delta > 0$ and $\ell \in [r]$. It holds that,
	\begin{align*}
		\p\left(\left\|\Pi_\ell\left(\rw - \drw\right)\right\| \geqslant \delta\right) \leqslant \exp\left(-C_1(\delta^2 m)^{\frac{1}{\ell}}n\right),
	\end{align*}
	for a constant $C_1 > 0$ that may depend on $\ell$ and $G$, but not on $n$ and $m$.
\end{lemma}
\begin{proof}
	Observe that, by the Baker-Campbell-Hausdorff-Dynkin formula, \eqref{e.BCDH}, every entry of  $\Pi_\ell\left(\rw - \drw\right)$ is a degree $\ell$ polynomial in the Gaussian random variables $X_1,\dots, X_n$. Thus, by Lemma \ref{lem:gausspolyconc} we have
	\begin{align*}
		\p\left(\|\Pi_\ell\left(\rw - \drw\right)\| \geqslant \delta \right) &\leqslant \exp\left(-C\left(\frac{\delta}{\sqrt{\E\left[\|\Pi_\ell\left(\rw - \drw\right)\|^2\right]}}\right)^{\frac{2}{\ell}}\right)\\
		&\leqslant \exp\left(-C(\delta^2 m)^{\frac{1}{\ell}}n\right),
	\end{align*}
	where the second inequality is due to Lemma \ref{lem:variances} (2).
\end{proof}
{\bf Bounding the polynomial part:}
To bound the polynomial part in \eqref{eq:groupopdecomp}, we focus on a single summand. Thus, let $\omega$ be a $(\beta,\beta')$-symplectic form, as in \eqref{eq:symlectic}, and let $P$ be an $\alpha$-monomial, in the form of \eqref{eq:monomial}. Assume further that $(\omega, P)$ form an $\ell$-homogeneous pair so that, as in \eqref{eq:degsmatch}, $\alpha + \beta + \beta' = \ell.$

As before, we shall require a second moment estimate, this time for monomials.
\begin{lemma} \label{lem:hypercontractivity}
	Let $P$ be an $\alpha$-monomial, as in \eqref{eq:monomial},  
	\[
	P(X,Y) = \prod\limits_{i=1}^{r'}\Pi^{j_i}_{\ell_i} Z_i,
	\]
	with $\sum \ell_i = \alpha,$ and $r' \leqslant \alpha$. Then for Gaussian random variables $X_1,\dots, X_n$,
	\[
	\E\left[P(\rw, -\drw)^2\right] \leqslant \frac{C}{n^\alpha},
	\]
	for a constant $C>0$ which may depend on $\alpha$. 	Consequently, for any $\delta > 0$,
	\[
	\p\left(\left|P(\rw, -\drw)\right|\geqslant \delta\right) \leqslant \exp\left(-C\delta^{\frac{2}{\alpha}}n\right).
	\]
\end{lemma}
\begin{proof}
	By H\"older's inequality, 
	\[
	\E\left[P(\rw, -\drw)^2\right]\leqslant \left(\prod_{i=1}^{r'}\E\left[\|\Pi_{\ell_i} Z_i\|^{2r'}\right]\right)^{\frac{1}{r'}},
	\]
	where for each $i$, $Z_i \in \{\rw, -\Psi_n^m\}$. Now, by the Baker-Campbell-Hausdorff-Dynkin formula, \eqref{e.BCDH}, for each $i$, both $\Pi_\ell \rw$ and $\Pi_\ell\Psi_n^m$ are degree-$\ell$ polynomials in the Gaussian variables $X_1,\dots, X_n$. Applying Lemma \ref{lem:gausspolyconc} we obtain,
	\[
	\E\left[\|\Pi_{\ell_i} Z_i\|^{2r'}\right] \leqslant C'_\alpha\E\left[\|\Pi_{\ell_i} Z_i\|^2\right]^{r'} \leqslant \frac{C_\alpha}{n^{r'\ell_i}},
	\]
	where the second inequality is due to Lemma \ref{lem:variances} (1), and $C_\alpha,C'_\alpha > 0$ are constants which can be chosen to only depend on $\alpha$. Thus,
	\[
	\E\left[P(\rw, -\drw)^2\right] \leqslant C_\alpha^{\frac{1}{r'}}\left(\prod_{i=1}^{r'}\frac{1}{n^{r'\ell_i}}\right)^{\frac{1}{r'}} = C_\alpha^{\frac{1}{r'}}\frac{1}{n^{\alpha}}.
	\]
	Above we have used $\sum \ell_i = \alpha$. This relation also shows that $P(\rw, -\drw)$ is a degree-$\alpha$ polynomial in the Gaussian variables, and hence by Lemma \ref{lem:gausspolyconc},
	\begin{align*}
		\p\left(\left|P(\rw, -\drw)\right|\geqslant \delta\right) &\leqslant \exp\left(-C_\alpha''\left(\frac{\delta}{\sqrt{\E\left[P(\rw, -\drw)^2\right]}}\right)^{\frac{2}{\alpha}}\right)\\
		&\leqslant \exp(-C\delta^{\frac{2}{\alpha}}n).
	\end{align*}
\end{proof}
With Lemma \ref{lem:hypercontractivity} we can now bound the homogeneous part in \eqref{eq:groupopdecomp}.
\begin{lemma} \label{lem:homoegprob}
	Let $\omega$ and $P$ be as above with $\alpha + \beta + \beta' = \ell$. Then,
	\[
	\p\left(\left|\omega(D_{\frac{1}{n}}S_n, -\drw)P(\rw, -\drw)\right| \geqslant \delta\right) \leqslant \exp(-C(\delta^2m)^{\frac{1}{\ell}} n)
	\]
	for some $C > 0$, which can depend on $\alpha,\beta,\beta'$, and $G$, but not on $n$ and $m$.
\end{lemma}
\begin{proof}
	Let us write $S= \Pi_\beta^{j_\beta} \rw$, $S'=\Pi^{j'_\beta}_{\beta'} \rw$, and similarly for $\Psi, \Psi'$. So,
	\[
	\omega(D_{\frac{1}{n}}S_n, -\drw) = S\Psi' - S'\Psi = (S-\Psi)\Psi' + (\Psi'-S')\Psi.
	\]
	Let us fix $M,M'>0$ to be chosen later and apply the union bound,
	\begin{align*}
		\p&\left(\left|\omega(D_{\frac{1}{n}}S_n, -\drw)P(\rw, -\drw)\right| \geqslant \delta\right)\\
		&\leqslant \p\left(|S-\Psi||\Psi'P(\rw, -\drw)| \geqslant \frac{\delta}{2}\right) + \p\left(|S'-\Psi'||\Psi P(\rw, -\drw)| \geqslant \frac{\delta}{2}\right)\\
		&\leqslant \p\left(\|\Pi_\beta(\rw - \drw)\| \geqslant \frac{\delta}{2M}\right) + \p\left(\|\Pi_{\beta'}(\rw - \drw)\| \geqslant \frac{\delta}{2M'}\right) \\
		&\ \ \ \ +\p\left(|\Psi'P(\rw, -\drw)| \geqslant M\right) + \p\left(|\Psi P(\rw, -\drw)| \geqslant M'\right).
	\end{align*}
	By Lemma \ref{lem:differencebound}, 
	\begin{align} \label{eq:normbound}
		\p&\left(\|\Pi_\beta(\rw - \drw)\| \geqslant \frac{\delta}{2M}\right) + \p\left(\|\Pi_{\beta'}(\rw - \drw)\| \geqslant \frac{\delta}{2M'}\right)\nonumber\\
		&\leqslant \exp\left(-C\left(\frac{\delta^2}{M^2}m\right)^{\frac{1}{\beta}}n\right) + \exp\left(-C\left(\frac{\delta^2}{M'^2}m\right)^{\frac{1}{\beta'}}n\right).
	\end{align}
	The other terms will be bounded using Lemma \ref{lem:hypercontractivity}. Indeed, observe that $\Psi P(\rw, -\drw)$ is a monomial of degree $\alpha + \beta$ and that $\Psi' P(\rw, -\drw)$ is a monomial of degree $\alpha + \beta'$. Since, necessarily, $\alpha + \beta, \alpha + \beta' \leqslant \ell$, Lemma \ref{lem:hypercontractivity} gives,
	\begin{align} \label{eq:monomialbound}
		\p\left(|\Psi P(\rw, -\drw)| \geqslant M'\right)&\leqslant \exp\left(-CM'^{\frac{2}{\beta + \alpha}}n\right),\nonumber \\
		\p\left(|\Psi'P(\rw, -\drw)| \geqslant M\right)&\leqslant \exp\left(-CM^{\frac{2}{\beta' + \alpha}}n\right).
	\end{align}
	
We now choose $M = (\delta^2m)^{-\frac{\beta+\alpha}{2\ell}}$ and $M' = (\delta^2m)^{-\frac{\beta'+\alpha}{2\ell}}$. Plugging these choices into \eqref{eq:normbound} and \eqref{eq:monomialbound}, and recalling $\alpha + \beta + \beta' = \ell$, we get
	\[
	\p\left(\left|\omega(D_{\frac{1}{n}}S_n, -\drw)P(\rw, -\drw)\right| \geqslant \delta\right)\leqslant \exp(-C(\delta^2m)^{\frac{1}{\ell}} n).
	\]

\end{proof}
\textbf{Finishing the proof:}
With Lemma \ref{lem:differencebound} and Lemma \ref{lem:homoegprob} we can now prove Proposition \ref{prop:stepapprox}.
\begin{proof}[Proof of Proposition \ref{prop:stepapprox}]
	Applying the decomposition in \eqref{eq:groupopdecomp} to each coordinate in $\Pi_\ell\left((\drw)^{-1}\star\rw \right)$, shows that there exists some numbers $K,K' > 0$ such that
\begin{align*}
& \p\left(\left\|\Pi_\ell\left((\drw)^{-1}\star\rw \right)\right\|  \geqslant \delta\right)\leqslant \p\left(\left\|\Pi_\ell\left(\rw -\drw \right) \right\| \geqslant \frac{\delta}{K}\right)\\
		&+ \sum\limits_{k=1}^{K'}\p\left(\left|\omega^k(D_{\frac{1}{n}}S_n, -\drw)P^k(\rw, -\drw)\right| \geqslant \frac{\delta}{K}\right),
\end{align*}
where for each $k \in [K']$, $(\omega^k,P^k)$ are an $\ell$-homogeneous pair. Applying Lemmas \ref{lem:differencebound} and \ref{lem:homoegprob} we see
\[
\p\left(\left\|\Pi_\ell\left((\drw)^{-1}\star\rw \right)\right\|  \geqslant \delta\right) \leqslant (K' + 1)\exp\left(-C\left(\frac{\delta^2}{K^2}m\right)^{\frac{1}{\ell}}n\right).
	\]
	Thus,
	\begin{align*}\lim\limits_{m\to \infty}\limsup\limits_{n\to\infty} &\frac{1}{n}\log\p\left(\left\|\Pi_\ell\left((\drw)^{-1}\star\rw \right)\right\|  \geqslant \delta\right)\\
		&\leqslant \lim\limits_{m\to \infty}\limsup\limits_{n\to\infty} \frac{1}{n}\log\left((K' + 1)\exp\left(-C\left(\frac{\delta^2}{K^2}m\right)^{\frac{1}{\ell}}n\right)\right)\\
		&= \lim\limits_{m\to \infty} -C\left(\frac{\delta^2}{K^2}m\right)^{\frac{1}{\ell}} = -\infty.
	\end{align*}
\end{proof}

\subsection{LDP for the random walk}
\label{ss.group-ldp}

Before proceeding to the proof of Theorem~\ref{t.main}, we introduce the following notation for some particular piecewise linear paths in $G$.
\begin{notation}\label{n.path}
	Fix $m\in\mathbb{N}$, and let $u\in\h^m$. We let $\sigma_{m,u}:[0,1]\to G$ denote the horizontal path such that $\sigma_{m,u}(0)=e$ and
\[
c_{\sigma_{m,u}}(t) = \left(L_{\sigma_{m,u}(t)^{-1}}\right)_{\ast}\sigma^{\prime}_{m,u}(t)=mu_{k} \qquad \text{for } \frac{k-1}{m}< t< \frac{k}{m}
\]
for $k \in[m]$.
\end{notation}

\begin{example}
It is a useful exercise to write these paths explicitly, at least in the step 2 case. We follow the notation in Example \ref{ex.step2}. For a given $m$ and $u=(u_1,\dots, u_m)\in\h^m$, we may use the expression for horizontal paths given by \eqref{e.horiz2} to explicitly describe the path $\sigma=\sigma_{m,u}$. For $0< t < \frac{1}{m}$,
\[	
\sigma(t) = \left(tmu_{1},\int_0^t Q^{(2)}(smu_{1},mu_{1})\,ds \right) = (tmu_{1},0),
\]
for $\frac{1}{m}< t < \frac{2}{m}$, writing $\sigma(t)=(A(t),a(t))$,
	\begin{align*}
		A(t) &= u_{1}+\left(t-\frac{1}{m}\right)mu_{2} \\
		a(t) &= \left(\int_0^{1/m} Q^{(2)}(smu_{1},mu_{1})\,ds
		+ \int_{1/m}^t Q^{(2)}\left(u_{1}+\left(s-\frac{1}{m}\right)mu_{2},mu_{2}\right)\,ds \right) \\
		&= \left(t-\frac{1}{m}\right)Q^{(2)}(u_{1},mu_{2}),
	\end{align*}
and generally for $\frac{k-1}{m}<t < \frac{k}{m}$, $k \in[m]$,
\begin{align*}
	A(t) &= \sum_{j=1}^{k-1} u_j +\left(t-\frac{k-1}{m}\right)mu_{k}\\
	a(t) &=	\sum_{\ell=2}^{k-1} \int_{(\ell-1)/m}^{\ell/m} Q^{(2)}\left(\sum_{j=1}^{\ell-1} u_j +\left(s-\frac{\ell-1}{m}\right)mu_\ell,mu_\ell\right)\,ds  \\
			&\qquad + \int_{(k-1)/m}^t Q^{(2)}\left(\sum_{j=1}^{k-1} u_j +\left(s-\frac{k-1}{m}\right)mu_{k},mu_{k}\right)\,ds \\
		&= \sum_{\ell=2}^{k-1}\sum_{j=1}^{\ell-1}Q^{(2)}(u_j,u_\ell)+\left(t-\frac{k-1}{m}\right)\sum_{j=1}^{k-1}\omega(u_j,mu_{k}).
	\end{align*}
Note in particular that, for $\sigma(1)=(A(1),a(1))$, we have $A(1)=u_{1}+\cdots+u_m$ and
\[ 
a(1) 
= \sum_{\ell=2}^m\sum_{j=1}^{\ell-1} Q^{(2)}\left( u_j,u_\ell\right)
		= \sum_{j=1}^m\sum_{\ell=j+1}^{m}Q^{(2)}\left( u_j,u_\ell\right),\]
and thus $\sigma(1) = \exp(u_1)\star\cdots\star\exp(u_m)$.
\end{example}

We are now ready to prove Theorem~\ref{t.main}, which follows immediately from the following proposition coupled with the exponentially good approximations established in Propositions \ref{p.exp-approx}, \ref{p.exp-approx-higher}, and \ref{prop:mainprop}
\begin{prop}
	Let $\{X_{k}\}_{k=1}^\infty$ be i.i.d.~mean 0 random variables in $\h$, and set
	\[
	\Lambda(\lambda):= \Lambda_X(\lambda) :=\log \E[\exp(\langle\lambda, X_{k}\rangle_\h)].
	\]
	Consider $Y_n^m$, as in Proposition \ref{p.Hldp}. If $\Psi_m(Y_n^m)$ is an exponentially good approximation for $\rw$. Then for
	\[
	S_n:=\exp(X_{1})\star\cdots\star\exp(X_n)
	\]
	the measures $\{\mu_n\}_{n=1}^\infty$ satisfy  a large deviations principle with rate function given by
\[
	J(x) : = \inf\left\{\int_0^1 \Lambda^{\ast}\left(c_\sigma(t)\right)\,dt : \sigma \text{ horizontal with } \sigma(0)=e \text{ and } \sigma(1)=x\right\},
	\]
\end{prop}
\begin{proof}
For a fixed $m\in\mathbb{N}$, recall the map $\Psi_m:\mathcal{H}^m \to G$ defined by \eqref{e.Psim}.

Note that, as the composition of group products with the exponential map, $\Psi_m$ is continuous. Thus, by Theorem~\ref{thm-contract} (the contraction principle) and Proposition \ref{p.Hldp}, for each $m\in\mathbb{N}$, an LDP holds for $\Psi_m(Y_n^m)=S^{m,1}_n\star\cdots \star S^{m,m}_n$ with the rate function
\begin{equation*}
	J_m(x):= I_m(\Psi_m^{-1}(\{x\}))
		:= \inf\{I_m(u): u\in \h^m \text{ and } \Psi_m(u)=x\}
	\end{equation*}
where $I_m$ is as given in \eqref{eq-I_m}. Note that for each $m$ and $u\in\h^m$
\[
I_m(u) = \int_0^1 \Lambda^{\ast}(c_{\sigma_{m,x}}(t))\,dt,
\]
where $\sigma_{m,u}:[0,1]\to G$ is the particular horizontal path introduced in Notation \ref{n.path}. Note also that $\Psi_m(u)=\sigma_{m,u}(1)$. Thus we may write
\[
J_m(x)=  \inf\left\{\int_0^1 \Lambda^{\ast}(c_{\sigma_{m,u}}(t))\,dt : u\in\h^m \text{ and } \sigma_{m,u}(1)=x\right\}.
\]

For $x \in G$, let
\[
\Sigma_x := \left\{\sigma: \sigma \text{ horizontal with } \sigma(0)=e \text{ and } \sigma(1)=x\right\}
\]
and
\[  \Sigma^m_x : = \left\{\sigma_{m,u}: u\in\h^m \text{ with } \sigma_{m,u}(1)=x\right\}.  \]
With this notation
\begin{align*}
	J_m(x)&=  \inf\left\{\int_0^1 \Lambda^{\ast}(c_{\sigma}(t))\,dt : \sigma\in \Sigma_x^m\right\} 
\end{align*}
and
\[
J(x) = \inf\left\{\int_0^1 \Lambda^{\ast}(c_{\sigma}(t))\,dt : \sigma\in \Sigma_x\right\}.
 \]
Since we know that $\Psi_m(Y_n^m)$ is an exponentially good approximation to $\rw$, by Theorem~\ref{t.exp-approx}, we also know that a weak LDP holds for $D_{\frac1n}S_n$ with the rate function
	\[
\sup_{\varepsilon>0} \liminf_{m\to\infty} \inf_{x^{\prime}\in B(x,\varepsilon)} J_m(x^{\prime}).
\]
Thus, we wish to show that this expression is exactly $J(x)$.
Moreover, noting that $J$ is a good rate function, if we further show that for every closed set $F$
\[
\inf_{x\in F} J(x) \leqslant \limsup_{m\to\infty} \inf_{x\in F} J_m(x),
\]
then the full LDP holds for $D_{\frac1n}S_n$ with the rate function $J$. But we get the latter estimate essentially for free, since for all $m$ and $x\in G$, $J(x)\leqslant J_m(x)$
as $\Sigma_x\supset\Sigma^m_x$ for all $m$.

So now note that, since $\varepsilon<\varepsilon^{\prime}$ implies
	\[ \inf_{x^{\prime}\in B(x,\varepsilon)} J_m(x^{\prime})
		\geqslant \inf_{x^{\prime}\in B(x, \varepsilon^{\prime})} J_m(x^{\prime}), \]
we have
\begin{align*}
	\sup_{\varepsilon>0} \liminf_{m\to\infty} \inf_{x^{\prime}\in B(x,\varepsilon)} J_m(x^{\prime})
	&= \,\lim_{\varepsilon\downarrow0} \liminf_{m\to\infty}
		\inf_{x^{\prime}\in B(x,\varepsilon)} J_m(x^{\prime}). 
	\end{align*}
Since $J_m(x^{\prime})\geqslant J(x^{\prime})$ for all $m$ and $x^{\prime}$,
\begin{align*}
\lim_{\varepsilon\downarrow0} \lim_{\ell\to \infty} \inf_{m\geqslant \ell}
		\inf_{x'\in B(x,\varepsilon)} J_m(x')
	&\geqslant \lim_{\varepsilon\downarrow0} \lim_{\ell\to \infty} \inf_{m\geqslant \ell} \inf_{x'\in B(x,\varepsilon)} J(x') \\
	&= \lim_{\varepsilon\downarrow0} \inf_{x'\in B(x,\varepsilon)} J(x')
	= J(x).
	\end{align*}
Thus, if $J(x)=\infty$ then we are done.

So assume that $J(x)<\infty$. Then, given any $\delta>0$, there exists $\gamma\in \Sigma_x$ such that
\begin{equation*}
\int_0^1 \Lambda^{\ast}(c_{\gamma}(t))\,dt < J(x) + \delta/2.
\end{equation*}
The proof will conclude by showing that we can approximate the left hand side arbitrarily well with curves from $\Sigma_{x'}^m$, when $x'$ is close to $x$.
Indeed, below we prove Lemma \ref{l.lip} and Proposition \ref{p.rb}, and together these imply that for all $\varepsilon>0$ and $\ell\in\mathbb{N}$, there exist $m\geqslant\ell$, $x'\in B(x,(C+1)\varepsilon)$ (where $C<\infty$ is the constant in Lemma \ref{l.Gronwall2}), and $\gamma_m\in\Sigma_{x'}^m$ such that
\begin{equation*}
 \left|\int_0^1 \Lambda^{\ast}(c_{\gamma}(t))\,dt
	- \int_0^1 \Lambda^{\ast}(c_{\gamma_m}(t))\,dt \right|<\varepsilon.
\end{equation*}
We then get
\[ \lim_{\varepsilon\downarrow0} \lim_{\ell\to \infty} \inf_{m\geqslant \ell} \inf_{x'\in B(x,\varepsilon)} J_m(x')
	\leqslant J(x)
	\]
which completes the proof.
\end{proof}

\begin{lemma}\label{l.lip}
	Suppose $c_\gamma\in L^1([0,1],\h)$ and $\Lambda^{\ast}(c_\gamma)\in L^1([0,1],[0,\infty))$. Then given $\varepsilon>0$ there exists a horizontal Lipschitz path $\sigma:[0,1]\to G$ so that $\rho_{cc}(\gamma(1),\sigma(1))< C\varepsilon$, where $C<\infty$ is a constant depending on $\gamma$ as in Lemma \ref{l.Gronwall2}, and
\[
\int_0^1 \Lambda^{\ast}(c_\gamma(t))\, dt -\int_0^1 \Lambda^{\ast}(c_{\sigma}(t))\,dt<\varepsilon.
\]
\end{lemma}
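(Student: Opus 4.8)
The plan is to let $\sigma$ be the horizontal path generated by a bounded truncation of the Maurer--Cartan form $c_\gamma$. For $R>0$ put
\[ c^R(t) := c_\gamma(t)\,\mathbf{1}_{\{|c_\gamma(t)|_\h\leqslant R\}}, \]
a bounded measurable map $[0,1]\to\h$ (it takes values in $\h$ since $\gamma$ is horizontal), and let $\sigma=\sigma^R:[0,1]\to G$ be the horizontal path with $\sigma(0)=e$ and $c_\sigma=c^R$ a.e. Such a path exists and is genuinely horizontal and Lipschitz: in exponential coordinates the relation $\sigma^{\prime}(t)=(L_{\sigma(t)})_\ast c^R(t)$, with the right side given by Proposition~\ref{p.tt}, is solved stratum by stratum --- $\sigma^{(1)}(t)=\int_0^t c^R(s)\,ds$, and each higher $\sigma^{(\ell)}$ is obtained by integrating a polynomial in the already-constructed lower strata against $c^R$, which in step $2$ is exactly formula \eqref{e.horiz2} --- so the solution exists on all of $[0,1]$, and since $|c_\sigma(t)|_\h\leqslant R$ a.e.\ the length formula \eqref{eq-length} gives $\rho_{cc}(\sigma(s),\sigma(t))\leqslant R|t-s|$.

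It remains to choose $R$ large, which I would do using two elementary limits. First, because $c_\gamma\in L^1([0,1],\h)$ we have $|c_\gamma|<\infty$ a.e., so $c^R\to c_\gamma$ a.e.\ and dominated convergence gives $\int_0^1|c_\gamma-c^R|_\h\,dt=\int_{\{|c_\gamma|>R\}}|c_\gamma(t)|_\h\,dt\to 0$ as $R\to\infty$. Second, from $\Lambda(0)=0$ together with Jensen's inequality and $\E X_k=0$ one has $\Lambda\geqslant 0=\Lambda(0)$, hence $\Lambda^{\ast}\geqslant 0$ and $\Lambda^{\ast}(0)=0$ (with $\Lambda^{\ast}$ as in \eqref{e.FL}); therefore $\Lambda^{\ast}(c^R(t))=\Lambda^{\ast}(c_\gamma(t))$ on $\{|c_\gamma(t)|_\h\leqslant R\}$ and $\Lambda^{\ast}(c^R(t))=0\leqslant\Lambda^{\ast}(c_\gamma(t))$ elsewhere, so
\[ \int_0^1 \Lambda^{\ast}(c_\gamma(t))\,dt-\int_0^1 \Lambda^{\ast}(c^R(t))\,dt=\int_{\{|c_\gamma|>R\}}\Lambda^{\ast}(c_\gamma(t))\,dt, \]
which tends to $0$ as $R\to\infty$ by absolute continuity of the Lebesgue integral, using $\Lambda^{\ast}(c_\gamma)\in L^1$ and the fact that the sets $\{|c_\gamma|>R\}$ decrease to a null set. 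Fix $R$ large enough that $\int_0^1|c_\gamma-c^R|_\h\,dt<\varepsilon$ and that the left-hand side of the display is $<\varepsilon$, and set $\sigma:=\sigma^R$. This choice already yields the asserted integral inequality, and for the endpoint bound I would invoke Gr\"onwall's Lemma~\ref{l.Gronwall2} with the roles of its $\sigma,\gamma$ played here by $\gamma,\sigma$ respectively: both paths are horizontal, start at $e$, and satisfy $\int_0^1|c_\gamma-c_\sigma|_\h<\varepsilon$, so $\rho_{cc}(\gamma(1),\sigma(1))<C\varepsilon$ with $C=C(\|c_\gamma\|_{L^1})$, exactly the constant named in the statement.

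No step here is genuinely hard; the two limits are routine absolute-continuity facts that I would not belabor. The one point deserving a sentence of care is the very first one --- that a bounded measurable horizontal velocity really is the Maurer--Cartan form of an honest Lipschitz horizontal path defined on all of $[0,1]$ --- and, relatedly, the observation $\Lambda^{\ast}(0)=0$, which is what makes truncation \emph{decrease} rather than merely perturb the $\Lambda^{\ast}$-integral and hence fixes the sign in the required one-sided estimate. If one preferred a truncation staying pointwise close to $c_\gamma$, the radial truncation $c^R(t)=c_\gamma(t)\min\{1,R/|c_\gamma(t)|_\h\}$ works equally well, since convexity of $\Lambda^{\ast}$ and $\Lambda^{\ast}(0)=0$ give $\Lambda^{\ast}(\theta u)\leqslant\theta\Lambda^{\ast}(u)\leqslant\Lambda^{\ast}(u)$ for $\theta\in[0,1]$.
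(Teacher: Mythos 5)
Your proposal is correct and follows essentially the same route as the paper: truncate $c_\gamma$ at level $R$ on the set $\{|c_\gamma|_\h>R\}$, control the endpoint displacement via Gr\"{o}nwall's Lemma~\ref{l.Gronwall2}, and use absolute continuity of the integrals of $|c_\gamma|_\h$ and $\Lambda^{\ast}(c_\gamma)$ to choose $R$. Your handling of the $\Lambda^{\ast}$-difference is in fact slightly cleaner than the paper's (which detours through convexity and a factor of $2$), since observing $\Lambda^{\ast}\geqslant 0=\Lambda^{\ast}(0)$ directly identifies the difference as $\int_{\{|c_\gamma|_\h>R\}}\Lambda^{\ast}(c_\gamma)\,dt$.
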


\begin{proof}
	Since $c_\gamma\in L^1([0,1],\h)$ and $\Lambda^{\ast}(c_\gamma)\in L^1([0,1],[0,\infty))$, there exists $\delta>0$ such that if $E\subset[0,1]$ is a measurable set with Lebesgue measure $|E|<\delta$, then
	\[ \int_0^1 |c_\gamma(t)|_\h 1_E(t)\,dt < \varepsilon \]
and
	\[ \int_0^1 \Lambda^{\ast}(c_\gamma(t))1_E(t)\,dt < \varepsilon. \]
Also, $c_\gamma\in L^1([0,1],\h)$ implies that there exists $R<\infty$ such that $|\{t\in[0,1]: |c_\gamma(t)|_\h>R\}|<\delta$ by Markov's inequality.

	Let $\sigma:=\gamma_R$ be the continuous path such that $\gamma_R(0)=e$ and
	\[ c_{\gamma_R}(t) = c_\gamma(t)1_{\{|c_\gamma(t)|_\h\leqslant R\}}.\]
Then $\sigma=\gamma_R$ is a horizontal path and
\begin{align*}
	\int_0^1 |c_\gamma(t)- c_{\gamma_R}(t)|_\h\, dt
	= \int_0^1 |c_\gamma(t)| 1_{\{|c_\gamma(t)|_\h>R\}}\,dt < \varepsilon.
\end{align*}
	Thus by Gr\"{o}nwall's Lemma~ \ref{l.Gronwall2}), $\rho(\gamma(1),\gamma_R(1))<C\varepsilon$. We also have that
\begin{align*}
\int_0^1 \Lambda^{\ast}(c_\gamma(t))\, dt -\int_0^1 \Lambda^{\ast}(c_{\gamma_R}(t))\,dt
		&= \int_0^1 (\Lambda^{\ast}(c_\gamma(t))-\Lambda^{\ast}(0))1_{\{|c_\gamma(t)|_\h>R\}}\,dt\\
		&\leqslant \int_0^1 2\Lambda^{\ast}(c_\gamma(t))1_{\{|c_\gamma(t)|_\h>R\}}\,dt < \varepsilon
\end{align*}
	for sufficiently large $R$, since $\Lambda^{\ast}$ is a convex non-negative function and thus for $R$ sufficiently large, $|v|\geqslant R$ implies that $\Lambda^{\ast}(v)\geqslant \Lambda^{\ast}(u)$ for any $|u|\leqslant |v|$.
\end{proof}

\begin{lemma}\label{l.au}
	Suppose $a:[0,T]\to\R^N$ is a bounded measurable function and $\{\pi_m\}$ is a sequence of partitions $\pi_m=\{0=\pi_m^0\leqslant \pi_m^1\leqslant\cdots\leqslant \pi_m^m=T\}$ such that $\operatorname{mesh}\,\pi_m\to0$. Given any $\varepsilon>0$, there exists $m$ sufficiently large, a simple function $a_m: [0,T] \to \R^N$ defined on $\pi_m$, and a measurable subset $E_m\subseteq[0,T]$ such that $|E_m^c|<\varepsilon$, where $|\cdot|$ denotes the Lebesgue measure,  and $|a_m- a|<\varepsilon$ on $E_m$.
\end{lemma}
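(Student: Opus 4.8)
The plan is to reduce to the uniform continuity of a continuous approximant. Since $a$ is bounded and measurable, Lusin's theorem yields a continuous $g:[0,T]\to\R^N$ with $\|g\|_\infty\leqslant\|a\|_\infty$ and $|\{t\in[0,T]:g(t)\neq a(t)\}|<\varepsilon$; take $E_m:=\{t\in[0,T]:g(t)=a(t)\}$ (which we may choose once and for all, independent of $m$), so that $|E_m^c|<\varepsilon$. Because $[0,T]$ is compact, $g$ is uniformly continuous, with nondecreasing modulus of continuity $\omega_g(s)\downarrow 0$ as $s\downarrow 0$.

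Next, define the step function $a_m$ adapted to $\pi_m$ by $a_m(t):=g(\pi_m^k)$ for $t\in[\pi_m^{k-1},\pi_m^k)$, $k=1,\dots,m$, and $a_m(T):=g(T)$; degenerate partition intervals (where $\pi_m^{k-1}=\pi_m^k$) are empty and contribute nothing, so $a_m$ is a genuine simple function defined on $\pi_m$ and $\|a_m\|_\infty\leqslant\|a\|_\infty$. For $t$ in a nonempty interval $[\pi_m^{k-1},\pi_m^k)$ we have $|t-\pi_m^k|\leqslant\operatorname{mesh}\pi_m$, hence $\|a_m-g\|_\infty\leqslant\omega_g(\operatorname{mesh}\pi_m)$. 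Since $\operatorname{mesh}\pi_m\to 0$, we may choose $m$ large enough that $\omega_g(\operatorname{mesh}\pi_m)<\varepsilon$; for such $m$ and every $t\in E_m$ we get $|a_m(t)-a(t)|=|a_m(t)-g(t)|\leqslant\|a_m-g\|_\infty<\varepsilon$, which is the assertion.

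There is no serious obstacle here; the only points requiring (mild) care are the bookkeeping ones: empty partition intervals, the value of $a_m$ at the right endpoint $T$, and the fact that the Lusin set $E_m$ can be taken independent of $m$. An alternative route, which respects the partition structure more directly, is to let $a_m(t)$ be the average of $a$ over the partition interval containing $t$: the Lebesgue differentiation theorem then gives $a_m(t)\to a(t)$ at every Lebesgue point of $a$ (the partition intervals shrink to $t$, having length $\leqslant\operatorname{mesh}\pi_m\to 0$), hence a.e.\ on $[0,T]$, and Egorov's theorem on the finite-measure space $[0,T]$ upgrades this to uniform convergence off a set of measure $<\varepsilon$, again yielding the claim. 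I would present the Lusin version, as it sidesteps differentiation of integrals.
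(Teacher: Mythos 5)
Your proof is correct, and it takes a genuinely different route from the paper's. The paper argues "from the range side'': it discretizes the values of $a$ into dyadic levels $T^j_k=a^{-1}\bigl(\bigl(\tfrac{j}{2^k},\tfrac{j+1}{2^k}\bigr]\bigr)$, approximates each level set in measure by a union of partition intervals once $\operatorname{mesh}\pi_m$ is small, and defines $a_m$ by sampling $a$ at a point of each partition interval lying in the matching level set; the exceptional set $E_m^c$ is the union of the symmetric differences and hence depends on $m$, and the resulting $a_m$ takes values among actual values of $a$. You instead argue "from the domain side'': Lusin's theorem produces a continuous $g$ agreeing with $a$ off a set of measure $<\varepsilon$ (fixed once and for all, independent of $m$), and uniform continuity of $g$ makes the step function built on $\pi_m$ uniformly close to $g$ for large $m$. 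Your version is shorter and the bookkeeping (empty intervals, the endpoint $T$) is handled correctly; the price is invoking Lusin (or, in your alternative, Lebesgue differentiation plus Egorov), whereas the paper's argument is self-contained measure theory. Both constructions keep $\|a_m\|_\infty\leqslant\|a\|_\infty$, which is what the application in Proposition \ref{p.rb} needs (values of $c_{\gamma_m}$ staying in a fixed ball where $\Lambda^{\ast}$ is Lipschitz), so either proof can be substituted for the other without affecting the rest of the paper.
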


\begin{proof}
	Without loss of generality we may assume  $T=1$ and $|a|\leqslant 1$. It suffices to consider the case where $N=1$ and $a\geqslant0$. To deal with $a$ taking values in $\R$, we would simply construct the same approximations to the positive and negative parts of $a$, and similarly for $N>1$ we would consider $a$ componentwise.

Fix $k\in\mathbb{N}$ sufficiently large that $\frac{1}{2^k}<\varepsilon$. Let $T^j_k:=a^{-1}\left(\left(\frac{j}{2^k},\frac{j+1}{2^k}\right])\right)$ for $j=0,\ldots,2^k-1$.

Let $U_m^\ell := \left[\pi_m^\ell,\pi_m^{\ell+1}\right)$ for $\ell=0,\ldots,m-1$. Since the $T^k_j$'s are measurable, we may find sufficiently large $m=m(k)$ so that for $j=0,1,\ldots,2^k-1$ there exist disjoint subsets $I^k_j\subset\{1,\ldots, m\}$ which partition $\{1,\ldots, m\}$ such that, for each $j$,
\[
\Delta^j_k:=T_k^j\,\Delta\,\left(\cup_{\ell\in I_k^j} U_m^\ell\right)
\]
has Lebesgue measure as small as one wants, for example, $|\Delta^j_k|<\frac{1}{4^k}$. These can be chosen so that $\ell\in I^j_k$ implies that $U_m^\ell\cap T^j_k\ne\emptyset$. Taking $\Delta_k:=\cup_{j=0}^{2^k-1} \Delta^j_k$, we have that $|\Delta_k| < 2^k\cdot \frac{1}{4^k} =\frac{1}{2^k}$. Take $E_m:=(\Delta_k)^c$.

	For each $\ell=0,\ldots,m-1$, we have $\ell\in I^j_k$ for some $j$ and we can fix some $t_\ell^*\in U_m^\ell\cap T_k^j$, and define
	\[ a_m(t) := \sum_{\ell=0}^{m-1} a(t_\ell^*) \mathbbm{1}_{U_m^\ell}(t). \]
Recalling that $\left|a(t)- \frac{j}{2^k}\right| < \frac{1}{2^k}$ for all $t\in T_j^k$ including $t_\ell^*$,
we have that, for $t\in T^j_k\cap U_m^\ell$,
\begin{align*}
	|a_m(t)-a(t)| &\leqslant \left|a_m(t)-\frac{j}{2^k}\right| + \left|\frac{j}{2^k}-a(t)\right|
		< \left|a(t_\ell^*) - \frac{j}{2^k}\right| + \frac{1}{2^k}
		< 2\cdot\frac{1}{2^k}.
\end{align*}
Thus
	\[ |a_m-a| < 2\varepsilon \qquad \text{ on } E_m = \bigcup_{j=0}^{2^k-1} \left(T^j_k\cap \left(\cup_{\ell\in I^j_k} U_m^\ell\right)\right). \]

\end{proof}

\begin{prop}\label{p.rb}
Suppose $\gamma$ is a horizontal path such that $\gamma$ is Lipschitz, and the Legendre transform satisfies $\Lambda^{\ast}\left( c_{\gamma})\in L^1([0, 1], [0, \infty) \right)$. 	Given $\varepsilon>0$, there exist $m \in \mathbb{N}$, $x^{\prime}\in B(x,\varepsilon)$, and $\gamma_m\in\Sigma_{x^{\prime}}^m$ such that
\begin{equation*}
 \left|\int_0^1 \Lambda^{\ast}(c_{\gamma}(t))\,dt
	- \int_0^1 \Lambda^{\ast}(c_{\gamma_m}(t))\,dt \right|<\varepsilon.
\end{equation*}

\end{prop}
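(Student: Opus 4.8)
The plan is to take $\gamma_m$ to be the path whose Maurer--Cartan form is the average of $c_\gamma$ over the uniform partition of $[0,1]$ into $m$ intervals. Since $\gamma$ is horizontal it is absolutely continuous, so $c_\gamma\in L^1([0,1],\h)$ (indeed $c_\gamma\in L^\infty$ by the Lipschitz hypothesis), and $\Lambda^{\ast}(c_\gamma)\in L^1$ is assumed. For $m\in\mathbb N$ and $k=1,\dots,m$ put $u^{(m)}_k:=\int_{(k-1)/m}^{k/m}c_\gamma(t)\,dt\in\h$ and let $\gamma_m:=\sigma_{m,u^{(m)}}$ be the path from Notation~\ref{n.path}; then $c_{\gamma_m}(t)=m u^{(m)}_k=m\int_{(k-1)/m}^{k/m}c_\gamma(s)\,ds$ for $t\in(\tfrac{k-1}{m},\tfrac km)$, that is, $c_{\gamma_m}=P_mc_\gamma$, where $P_m$ denotes the conditional-expectation operator replacing a function by its average on each of these intervals. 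By construction $\gamma_m\in\Sigma^m_{x_m}$ with $x_m:=\gamma_m(1)=\Psi_m(u^{(m)})$. (One could instead invoke Lemma~\ref{l.au} to approximate the bounded function $c_\gamma$ by a step function on this partition, but the averaging choice is better adapted to the convexity of $\Lambda^{\ast}$ used below.)

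Next I would control the endpoint. Since $c_\gamma\in L^1$ we have $\|P_mc_\gamma-c_\gamma\|_{L^1}\to0$ as $m\to\infty$: approximate $c_\gamma$ in $L^1$ by a continuous function $g$, for which $P_mg\to g$ uniformly by uniform continuity, and use that each $P_m$ is an $L^1$-contraction. Since $\gamma$ and $\gamma_m$ both start at $e$, Gr\"onwall's Lemma~\ref{l.Gronwall2} applied with reference path $\gamma$ (its constant $C=C(\|c_\gamma\|_{L^1})<\infty$ being fixed) gives $\rho_{cc}(x_m,\gamma(1))=\rho_{cc}(\gamma_m(1),\gamma(1))\le C\|P_mc_\gamma-c_\gamma\|_{L^1}\to0$ as $m\to\infty$. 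Hence $x_m\in B(x,\varepsilon)$ for all large $m$, where $x=\gamma(1)$.

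It remains to show $\int_0^1\Lambda^{\ast}(c_{\gamma_m})\,dt\to\int_0^1\Lambda^{\ast}(c_\gamma)\,dt$. The upper inequality is immediate from Jensen's inequality and convexity of $\Lambda^{\ast}$: writing $I_k:=(\tfrac{k-1}m,\tfrac km)$ one has $\Lambda^{\ast}\!\big(m\int_{I_k}c_\gamma\big)\le m\int_{I_k}\Lambda^{\ast}(c_\gamma)$, and summing $\tfrac1m$ times these gives $\int_0^1\Lambda^{\ast}(c_{\gamma_m})\le\int_0^1\Lambda^{\ast}(c_\gamma)<\infty$ for every $m$. For the reverse inequality I would use that $\Lambda^{\ast}$, being a supremum of affine functions, is lower semicontinuous and nonnegative: passing to a subsequence along which $P_mc_\gamma\to c_\gamma$ a.e.\ (possible since $P_mc_\gamma\to c_\gamma$ in $L^1$), Fatou's lemma yields $\liminf_m\int_0^1\Lambda^{\ast}(c_{\gamma_m})\ge\int_0^1\liminf_m\Lambda^{\ast}(P_mc_\gamma)\ge\int_0^1\Lambda^{\ast}(c_\gamma)$. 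Together these give $\int_0^1\Lambda^{\ast}(c_{\gamma_m})\to\int_0^1\Lambda^{\ast}(c_\gamma)$ along that subsequence. Finally I would pick $m$ in this subsequence large enough (in particular larger than any prescribed $\ell$) that simultaneously $\rho_{cc}(x_m,x)<\varepsilon$ and $\big|\int_0^1\Lambda^{\ast}(c_{\gamma_m})-\int_0^1\Lambda^{\ast}(c_\gamma)\big|<\varepsilon$, and take $x':=x_m$; since $\gamma_m\in\Sigma_{x'}^m$ this finishes the proof.

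The only genuinely delicate point is the reverse inequality $\int_0^1\Lambda^{\ast}(c_{\gamma_m})\ge\int_0^1\Lambda^{\ast}(c_\gamma)-\varepsilon$: Jensen controls the opposite direction, so one is forced through lower semicontinuity of $\Lambda^{\ast}$ and a.e.\ convergence along a subsequence. This is also why the averaging construction is preferable to building $\gamma_m$ from point values of $c_\gamma$ as in Lemma~\ref{l.au} — with point values one would additionally have to control $\Lambda^{\ast}$ on the full range of $c_\gamma$, which may meet the boundary of $\{\Lambda^{\ast}<\infty\}$, where $\Lambda^{\ast}$ need not be continuous. The Lipschitz hypothesis is used only to know that $c_\gamma$, and hence each $c_{\gamma_m}$, is bounded, which (if one prefers) allows Fatou's lemma to be replaced by dominated convergence.
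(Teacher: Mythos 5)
Your proof is correct, but it takes a genuinely different route from the paper's at the key step. The paper builds $\gamma_m$ from Lemma \ref{l.au}: it approximates $c_\gamma$ in $L^1$ by a simple function $\varphi_m$ assembled from \emph{point values} of $c_\gamma$ on the uniform partition, and then converts $\|c_\gamma-\varphi_m\|_{L^1}<\delta$ into closeness of the two $\Lambda^{\ast}$-integrals by invoking Lipschitz continuity of the convex function $\Lambda^{\ast}$ on the ball of radius $2R$ in $\h$, where $R$ is the essential bound on $|c_\gamma|_\h$. You instead take $c_{\gamma_m}$ to be the interval averages of $c_\gamma$, get the upper inequality for free from Jensen, and recover the lower inequality from lower semicontinuity of $\Lambda^{\ast}$ together with Fatou along an a.e.-convergent subsequence. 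What each buys: the paper's argument, where it applies, is quantitative (the error is at most $C'\delta$ for every sufficiently large $m$, with no subsequence extraction), but it tacitly requires $\Lambda^{\ast}$ to be finite --- hence locally Lipschitz --- on an entire ball containing the essential range of $c_\gamma$; a convex function is Lipschitz only on compact subsets of the interior of its effective domain, and in case (ii) of Theorem \ref{t.main} (bounded increments) $\Lambda^{\ast}\equiv+\infty$ outside a bounded set, so that hypothesis can genuinely fail. Your averaging construction sidesteps this: Jensen guarantees $\Lambda^{\ast}(m u_k^{(m)})<\infty$ automatically, and only lower semicontinuity is needed where $\Lambda^{\ast}$ is discontinuous. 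The price is that you obtain the conclusion only along a subsequence of $m$'s, which is harmless since the statement (and its use in the proof of Theorem \ref{t.main}) only requires arbitrarily large admissible $m$, as you note. The endpoint control via Gr\"onwall's Lemma \ref{l.Gronwall2} is essentially identical in the two arguments.
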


\begin{proof}
Since $\gamma$ is Lipschitz, there exists $R<\infty$ such that $|c_{\gamma}(t)|_\h\leqslant R$ for a.e.~$t$. Since $\Lambda^{\ast}$ is convex on $\h$, $\Lambda^{\ast}$ is Lipschitz continuous on compact subsets. Fix $K\subset\h$ to be the closed ball of radius $2R$ centered at 0, and let $C'=C'_{\Lambda^{\ast}}(R)$ denote the Lipschitz coefficient of $\Lambda^{\ast}$ on $K$.

	By Lemma \ref{l.au}, we may define a sequence of simple functions $\varphi_m$ on the partitions $\pi_m=\{0<\frac{1}{m}<\cdots<\frac{m-1}{m}<1\}$ such that $\varphi_m\to c_\gamma$ in $L^1([0,1],\h)$.  Thus we may choose $m$ sufficiently large that $|c_\gamma-\varphi_m|_{L^1([0,1],\h)}<\delta$, where $\delta<\min\{\varepsilon/C',\varepsilon/C\}$ where $C$ is the constant appearing in the statement of Lemma \ref{l.Gronwall2}. Define $\gamma_m:[0,1]\to G$ to be the horizontal piecewise linear path so that $\gamma_m(0)=e$ and $c_{\gamma_m}(t) =\varphi_m(t)$ for all $t\in[0,1]$ where $\varphi_m$ is continuous. Note that $\gamma_m=\sigma_{m,u}$ where $u=(u_{1},\ldots,u_m)$ is given by $u_{k}=\varphi_m(t)$ for $\frac{k-1}{m}<t<\frac{k}{m}$. By Lemma \ref{l.Gronwall2}, $\gamma_m(1)\in B(x,\varepsilon)$, and
\begin{align*}
	\left|\int_0^1 \Lambda^{\ast}(c_\gamma(t))\,dt - \int_0^1 \Lambda^{\ast}(c_{\gamma_m}(t))\,dt\right|
	&\leqslant \int_0^1 |\Lambda^{\ast}(c_\gamma(t))- \Lambda^{\ast}(c_{\gamma_m}(t))|\,dt \\
	&\leqslant C' \int_0^1 |c_\gamma(t)- c_{\gamma_m}(t)|_\h \,dt < C'\delta< \varepsilon.
\end{align*}
\end{proof}

\begin{remark}
Thus we see that, for the rate function $J$, the infimum is attained over Lipschitz paths.
It is also standard that the horizontal distance is defined taking the infimum over horizontal Lipschitz paths, rather than just over horizontal paths. As in the case of the rate function, these definitions are equivalent which we may see as follows.

As before, we have
\[ d_{cc}(x) = \inf\{\ell(\gamma):\gamma:[0,1]\to G \text{ horizontal and } \gamma(0)=e, \gamma(1)=x\}, \]
and also take
\[d_L(x) := \inf\{\ell(\gamma):\gamma:[0,1]\to G \text{ horizontal, Lipschitz, and } \gamma(0)=e,  \gamma(1)=x\}. \]
Clearly one has $d_L\geqslant d_{cc}$. Now, any horizontal curve of positive length is an absolutely continuous reparameterization of an arclength parameterized horizontal curve, that is, for any horizontal $\gamma:[0,1]\to G$ with $\ell(\gamma)>0$, there exists a (Lipschitz) $\widetilde{\gamma}:[0,\ell(\gamma)]\to G$ with $|c_{\widetilde{\gamma}}|_\h=1$ so that $\gamma=\widetilde{\gamma}\circ\varphi$ for $\varphi:[0,1]\to[0,\ell(\gamma)]$ given by $\varphi(t) =\int_0^t |c_\gamma(s)|\,ds$; see for example Lemma 3.71 of \cite{AgrachevBarilariBoscainBook2019}. So for any horizontal path $\gamma$ with $\ell(\gamma)<\infty$, we have the Lipschitz horizontal path  $\hat{\gamma}:[0,1]\to G$ given by $\hat{\gamma}(t):=\widetilde{\gamma}(\ell(\gamma) t)$ which satisfies $\hat{\gamma}(0)=e$, $\hat{\gamma}(1)=x$. Finally, the length of an absolutely continuous curve is invariant under an absolutely continuous reparameterization (this is just a change of variables in the integral, or see for example Lemma 3.70 of \cite{AgrachevBarilariBoscainBook2019}), and so $\ell(\hat{\gamma})=\ell(\gamma)$. Thus $d_L\leqslant d_{cc}$.

\end{remark}

\subsection{Solving the variational problem for Gaussian random walks}
We now consider the implications of Theorem~\ref{t.main} when $X_n$ are i.i.d.~$\mathcal{N}(0,\mathrm{Id}_{\mathcal{H}})$ random variables on $\h$. In this case, it is straightforward to see $\Lambda(\lambda)=\frac{1}{2}|\lambda|_\h^2$, $\Lambda^{\ast}(u)=\frac{1}{2}|u|_\h^2$, and so Theorem~\ref{t.main} gives an LDP for the associated sub-Riemannian random walk with the rate function

\begin{align*}
	J_{\mathcal{N}}(x) &:= \inf\left\{\frac{1}{2}\int_0^1 |c_\gamma(t)|_\h^2\,dt: \gamma \text{ horizontal, } \gamma(0)=e, \gamma(1)=x\right\} \\
		&\notag= \inf\left\{\frac{1}{2}E(\gamma): \gamma \text{ horizontal, } \gamma(0)=e, \gamma(1)=x\right\},
\end{align*}
where $E(\gamma)$ is the so-called energy of the path $\gamma$. Recall that the length of a horizontal path is given by \eqref{eq-length}. Using an argument similar to \cite[Lemma 3.64]{AgrachevBarilariBoscainBook2019} we have the following lemma.

\begin{lemma}\label{l.3.12}
	A horizontal curve $\gamma:[0,1]\to G$ is a minimizer of $\int_0^1 |c_\gamma(t)|_\h^2\,dt$ among the set of horizontal curves joining $e$ and $x$ if and only if it is a minimizer of the length functional $\ell(\gamma)$ among the horizontal curves joining $e$ and $x$.
\end{lemma}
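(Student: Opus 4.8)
The plan is to follow the classical Riemannian argument (as in Lemma 3.64 of \cite{AgrachevBarilariBoscainBook2019}) adapted to the sub-Riemannian setting, using the Cauchy--Schwarz inequality together with the arclength reparameterization of horizontal curves. First I would recall the elementary inequality relating length and energy: for any horizontal curve $\gamma:[0,1]\to G$, by Cauchy--Schwarz
\[
\ell(\gamma)^2 = \left(\int_0^1 |c_\gamma(t)|_\h\,dt\right)^2 \leqslant \int_0^1 |c_\gamma(t)|_\h^2\,dt = E(\gamma),
\]
with equality if and only if $|c_\gamma(t)|_\h$ is a.e.\ constant on $[0,1]$, i.e.\ $\gamma$ is parameterized proportionally to arclength.

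Next I would use the reparameterization fact recalled in the Remark following Proposition~\ref{p.rb}: any horizontal curve of positive length is an absolutely continuous reparameterization of a horizontal curve parameterized by arclength, and the length functional is invariant under such reparameterizations, while the energy is not increased by passing to the constant-speed parameterization. Concretely, given horizontal $\gamma$ joining $e$ and $x$ with $\ell(\gamma)>0$, there is a horizontal $\hat\gamma:[0,1]\to G$ with the same endpoints, with $|c_{\hat\gamma}|_\h\equiv\ell(\gamma)$ constant, and with $\ell(\hat\gamma)=\ell(\gamma)$ and hence $E(\hat\gamma)=\ell(\hat\gamma)^2=\ell(\gamma)^2$. (The degenerate case $\ell(\gamma)=0$ forces $x=e$ and is trivial, with the constant curve minimizing both functionals.)

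Now suppose $\gamma$ minimizes the energy $\int_0^1|c_\gamma(t)|_\h^2\,dt$ among horizontal curves joining $e$ and $x$. For any competitor horizontal curve $\eta$ joining $e$ and $x$, pass to its constant-speed reparameterization $\hat\eta$; then
\[
\ell(\gamma)^2 \leqslant E(\gamma) \leqslant E(\hat\eta) = \ell(\hat\eta)^2 = \ell(\eta)^2,
\]
so $\ell(\gamma)\leqslant\ell(\eta)$, i.e.\ $\gamma$ minimizes length. Conversely, suppose $\gamma$ minimizes length; replacing $\gamma$ by its constant-speed reparameterization $\hat\gamma$ (which has the same endpoints, the same length, hence is still a length minimizer, and satisfies $E(\hat\gamma)=\ell(\hat\gamma)^2$) we may assume $\gamma$ has constant speed so that $E(\gamma)=\ell(\gamma)^2$. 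Then for any horizontal competitor $\eta$ joining $e$ and $x$, $E(\gamma)=\ell(\gamma)^2\leqslant\ell(\eta)^2\leqslant E(\eta)$, so $\gamma$ minimizes energy. (One should also note that, by \cite[Theorem 5.15]{BonfiglioliLanconelliUguzzoniBook}, length minimizers joining $e$ and $x$ exist, so the set over which we minimize is nonempty; and a length minimizer is automatically Lipschitz after reparameterization, which is needed for $E$ to be finite.)

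The only mild subtlety — the ``main obstacle,'' though it is minor — is the bookkeeping around the arclength reparameterization: one must verify that the reparameterized curve is genuinely horizontal (the Maurer--Cartan form transforms by the derivative of the time change, which is horizontal-valued a.e.), that it has the same endpoints, and that the change of variables in the length and energy integrals is legitimate for absolutely continuous, possibly non-injective reparameterizations. All of this is standard and is exactly the content of Lemmas 3.70 and 3.71 of \cite{AgrachevBarilariBoscainBook2019} cited in the preceding Remark, so I would simply invoke those results rather than reproving them.
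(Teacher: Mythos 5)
Your proposal is correct and follows essentially the same route as the paper: the Cauchy--Schwarz inequality $\ell(\gamma)^2\leqslant E(\gamma)$ with equality exactly for constant-speed curves, combined with the arclength reparameterization and reparameterization-invariance facts from \cite[Lemmas 3.70, 3.71]{AgrachevBarilariBoscainBook2019}. You simply spell out the two implications and the degenerate case in more detail than the paper does (and share with it the mild imprecision that a length minimizer must first be put in constant-speed parameterization to be an energy minimizer), so nothing further is needed.
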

\begin{proof}
We know that
\[
\ell^2(\gamma) = \left(\int_0^1 | c_\gamma \left( t \right)|_{\mathcal{H}}dt\right)^2\leqslant\int_0^1 | c_\gamma \left( t \right)|^2_{\mathcal{H}}dt,
\]
where equality holds if and only if $ | c_\gamma \left( t \right)|_{\mathcal{H}}$ is constant for all $t\in[0,1]$. The result follows since any horizontal curve is an absolutely continuous reparametrization of an arc length-parameterized horizontal path \cite[Lemma 3.71]{AgrachevBarilariBoscainBook2019} and $\ell(\gamma)$ is invariant under absolutely continuous reparameterizations \cite[Lemma 3.70]{AgrachevBarilariBoscainBook2019}.
\end{proof}

Thus, we can conclude that the minimizer of $J_\mathcal{N}(x)$ (which neccesarily satisfies $|J_\mathcal{N}(x)| <\infty$) is indeed a minimizer of $d_{cc}(x)$. As a nice consequence we obtain Corollary \ref{c.3.13}, a precise LDP for the normal sampling random walk, in terms of the sub-Riemannian metric.

\begin{proof}[Proof of Corollary \ref{c.3.13}]
This is a direct consequence of Chow--Rashevskii's Theorem and Lemma~\ref{l.3.12}.
\end{proof}

\bibliographystyle{amsplain}
\providecommand{\bysame}{\leavevmode\hbox to3em{\hrulefill}\thinspace}
\providecommand{\MR}{\relax\ifhmode\unskip\space\fi MR }
\providecommand{\MRhref}[2]{%
  \href{http://www.ams.org/mathscinet-getitem?mr=#1}{#2}
}
\providecommand{\href}[2]{#2}

\appendix

\section{Proof of the upper bound}
This section provides the proof of the upper bound \eqref{e.prop-ldp-hm}. For fixed $m$, let $\{Y_n^m\}_{n\geqslant 1}$ be as described in Section~\ref{s.LDP}. The following result arises naturally in the usual proof of the upper bound of Cram\'{e}r's Theorem. Indeed, here we follow the proof in \cite{DemboZeitouniBook2010}. However, the expression we leave for the upper bound is perhaps not completely standard. So for clarity we provide the proof here.
\begin{prop}\label{prop-ldp-hm}
	Fix $m\in\mathbb{N}$. For any closed $F\subset \mathcal{H}^m$ we have that
\[
\limsup_{n \to \infty}\frac1n \log\p \left(Y_n^{m}\in F \right)
\leqslant - \inf_{x\in F} I_m(x)
\]
where
	\[I_m(x)= \sup_{\lambda\in \mathcal{H}^m} \left\{ \langle \lambda, x\rangle_{\mathcal{H}^m}  - \limsup_{n\to\infty}\frac1n \log \E \left[\exp\left(n\langle \lambda, Y_n^{m}\rangle_{\mathcal{H}^m} \right)\right] \right\}. \]
\end{prop}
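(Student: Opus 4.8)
The plan is to reproduce the two-step proof of the upper bound in Cram\'{e}r's theorem (cf.\ \cite[Section 2.2]{DemboZeitouniBook2010}), being careful that every estimate only ever invokes the $\limsup$ appearing in the definition of $I_m$, so that no convergence of $\frac1n\log\E[\exp(n\langle\lambda,Y_n^m\rangle_{\h^m})]$ is required. Write $\bar\Lambda_m(\lambda):=\limsup_{n\to\infty}\frac1n\log\E[\exp(n\langle\lambda,Y_n^m\rangle_{\h^m})]$, so that $I_m(x)=\sup_{\lambda\in\h^m}\big(\langle\lambda,x\rangle_{\h^m}-\bar\Lambda_m(\lambda)\big)$; taking $\lambda=0$ shows $I_m\geqslant0$, and the block identity $\E[\exp(n\langle\lambda,Y_n^m\rangle_{\h^m})]=\prod_{k=1}^m\exp((n_k-n_{k-1})\Lambda(\lambda_k))$ from the proof of Proposition~\ref{p.Hldp}, together with the standing hypothesis that $\Lambda$ is finite on all of $\h$, shows $\bar\Lambda_m(\lambda)<\infty$ for every $\lambda\in\h^m$.

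First I would prove the bound for compact $F$. Fix $\rho>0$. For each $x\in F$, choose $\lambda_x\in\h^m$ with $\langle\lambda_x,x\rangle_{\h^m}-\bar\Lambda_m(\lambda_x)\geqslant\min\{I_m(x),\rho^{-1}\}-\rho$ (possible by definition of the supremum, and this forces $\bar\Lambda_m(\lambda_x)<\infty$ since the right-hand side is $\geqslant-\rho$). For $y$ in the open ball $B(x,\delta)$ one has $e^{n\langle\lambda_x,y\rangle_{\h^m}}\geqslant e^{n(\langle\lambda_x,x\rangle_{\h^m}-\delta|\lambda_x|_{\h^m})}$, so Markov's inequality gives $\p(Y_n^m\in B(x,\delta))\leqslant e^{-n\langle\lambda_x,x\rangle_{\h^m}+n\delta|\lambda_x|_{\h^m}}\E[\exp(n\langle\lambda_x,Y_n^m\rangle_{\h^m})]$, hence
\[
\limsup_{n\to\infty}\frac1n\log\p(Y_n^m\in B(x,\delta))\leqslant \delta|\lambda_x|_{\h^m}-\langle\lambda_x,x\rangle_{\h^m}+\bar\Lambda_m(\lambda_x).
\]
Choosing $\delta=\delta_x$ with $\delta_x|\lambda_x|_{\h^m}\leqslant\rho$ makes the right side $\leqslant 2\rho-\min\{I_m(x),\rho^{-1}\}$. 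Covering $F$ by finitely many such balls $B(x_i,\delta_{x_i})$, $i=1,\dots,k$, a union bound together with $\limsup_{n}\frac1n\log\big(\sum_{i\leqslant k}a_n^{(i)}\big)=\max_i\limsup_{n}\frac1n\log a_n^{(i)}$ yields
\[
\limsup_{n\to\infty}\frac1n\log\p(Y_n^m\in F)\leqslant 2\rho-\min\Big\{\inf_{x\in F} I_m(x),\ \rho^{-1}\Big\},
\]
and letting $\rho\downarrow0$ gives $\limsup_{n\to\infty}\frac1n\log\p(Y_n^m\in F)\leqslant-\inf_{x\in F}I_m(x)$.

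Next I would upgrade to arbitrary closed $F$ via exponential tightness, which is where finiteness of $\bar\Lambda_m$ is used. Fix an orthonormal basis $(e_j)_{j=1}^{md_1}$ of $\h^m$ and for $R>0$ let $K_R:=\{y\in\h^m:|\langle e_j,y\rangle_{\h^m}|\leqslant R\text{ for all }j\}$, a compact set. Markov's inequality gives $\limsup_{n\to\infty}\frac1n\log\p(\langle\pm e_j,Y_n^m\rangle_{\h^m}>R)\leqslant -R+\bar\Lambda_m(\pm e_j)$, so with $c_m:=\max_{j}\max\{\bar\Lambda_m(e_j),\bar\Lambda_m(-e_j)\}<\infty$ a union bound gives $\limsup_{n\to\infty}\frac1n\log\p(Y_n^m\notin K_R)\leqslant -R+c_m$. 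For any closed $F$ and any $L>0$, set $K:=K_{L+c_m}$; then $F\cap K$ is compact and, applying the compact case to $F\cap K$ and using $\inf_{F\cap K}I_m\geqslant\inf_F I_m$,
\[
\limsup_{n\to\infty}\frac1n\log\p(Y_n^m\in F)\leqslant \max\Big\{-\inf_{x\in F\cap K}I_m(x),\ -L\Big\}\leqslant\max\Big\{-\inf_{x\in F}I_m(x),\ -L\Big\}.
\]
Letting $L\to\infty$ completes the proof. The only mildly delicate point is the bookkeeping with the truncation $\min\{I_m(x),\rho^{-1}\}$, needed to handle points $x$ with $I_m(x)=\infty$; there is no genuine obstacle, since the Chebyshev/union-bound argument produces exactly the $\limsup$-formulation in the statement.
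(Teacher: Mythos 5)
Your proposal is correct and follows essentially the same route as the paper's own proof in the appendix: a Chernoff/Markov bound on small balls with the truncation device $\min\{I_m(x),\rho^{-1}\}$ (the paper's $I_m^{\delta}$), a finite subcover and union bound for compact $F$, and an extension to closed sets via exponential tightness obtained from Markov's inequality applied coordinatewise. The only cosmetic difference is that your tightness estimate uses the finiteness of $\bar\Lambda_m(\pm e_j)$ directly to get the bound $-R+c_m$, where the paper invokes $\Lambda^{\ast}(r)\to\infty$; both are the same standard argument.
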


\begin{proof} We use the argument for proving the upper bound in the classical Cram\'{e}r's theorem, see for example \cite[p. 37]{DemboZeitouniBook2010}. First we assume that $F$ is compact, and note that it suffices to prove that, for any $\delta>0$,
\begin{align*}
& \limsup_{n \to \infty}\frac1n \log\p \left(Y_n^{m}\in F \right)
\leqslant \delta - \inf_{x\in F} I_m^\delta(x)
\end{align*}
where $I_m^\delta(x):=\min\left\{\frac1\delta, I_m(x)-\delta\right\}$.

and
\[
 I_m(x)= \sup_{\lambda\in \mathcal{H}^m} \left\{ \langle \lambda, x\rangle_{\mathcal{H}^m}  - \limsup_{n\to\infty}\frac1n \log \E \left(\exp\left(n\langle \lambda, Y_n^{m}\rangle_{\mathcal{H}^m} \right)\right) \right\}
\]

	So fix $\delta>0$ and let $\widetilde\Lambda(\lambda):=\limsup_{n\to\infty}\frac1n \log \E \left[\exp\left(n\langle \lambda, Y_n^{m}\rangle_{\mathcal{H}^m} \right)\right]$.
For every $q\in F$, let $\lambda_q\in \mathcal{H}^m$  be such that
\begin{equation}\label{eq-lambda-q}
\langle \lambda_q,q\rangle_{\mathcal{H}^m}-\widetilde\Lambda(\lambda_q)\geqslant I_m^\delta(q),
\end{equation}
and choose $r_q>0$ such that $r_q|\lambda_q|_{\mathcal{H}^m}\le\delta$. Denote by $B_q$ the ball centered at $q$ of radius $r_q$.  By Markov's inequality
\begin{align*}
\E \left[\exp\left(n\langle \lambda_q, Y_n^{m}\rangle_{\h^m} \right) \right]
	&\geqslant \E \left[\exp\left(n\langle \lambda_q, Y_n^{m}\rangle_{\h^m} \right)
		\mathbbm{1}_{Y_n^{m}\in B_q}\right]\\
	&\geqslant \exp\left[n\inf_{x\in B_q}\left\{\langle \lambda_q, x\rangle_{\h^m} \right\} \right]
		\p \left(Y_n^{ m}\in B_q \right).
\end{align*}
Thus we have
\begin{align*}
\p \left(Y_n^{ m}\in B_q \right)\le
	\E \left[\exp\left(n\langle \lambda_q, Y_n^{m}\rangle_{\h^m}
		- n\inf_{x\in B_q}\left\{\langle \lambda_q, x\rangle_{\h^m} \right\} \right) \right].
\end{align*}
Moreover,
\[
-\inf_{x\in B_q}\{\langle \lambda_q, x\rangle_{\h^m} \} \leqslant r_q|\lambda_q|_{\h^m}-\langle \lambda_q, q\rangle_{\h^m}\leqslant \delta-\langle \lambda_q, q\rangle_{\h^m}.
\]
Thus, for any $q\in F$,
\begin{align*}
\frac1n \log\p \left(Y_n^{ m}\in B_q \right)
	&\leqslant  -\inf_{x\in B_q}\{\langle \lambda_q, x\rangle_{\h^m} \} +\frac1n \log\E \left(\exp\left[n\langle \lambda_q, Y_n^{m}\rangle_{\h^m} \right) \right],
\end{align*}
hence
\[
\limsup_{n \to \infty} \frac1n \log\p \left(Y_n^{ m}\in B_q \right) \leqslant  \delta-\langle \lambda_q, q\rangle_{\h^m}+\widetilde{\Lambda}(\lambda_q).
\]
Now by compactness of $F$ we can extract a finite covering $\cup_{j=1}^N B_{q_j}$, and hence
\begin{align*}
\limsup_{n \to \infty}\frac1n \log\p \left(Y_n^{ m}\in F \right)\leqslant \frac1n \log N+\delta-\min_{1\leqslant j\leqslant N}\{ \langle \lambda_{q_j}, q_j\rangle_{\h^m}-\widetilde{\Lambda}(\lambda_{q_j})\}
\end{align*}
By \eqref{eq-lambda-q} we obtain that
\begin{align*}
& \limsup_{n \to \infty}\frac1n \log\p \left(Y_n^{m}\in F \right)\le
\delta-\min_{1\leqslant j\leqslant N} I_m^\delta(q_j)\leqslant \delta-\inf_{x\in F} I_m^\delta(x).
\end{align*}
Let $\delta\to 0$ we then have
\begin{align*}
& \limsup_{n \to \infty}\frac1n \log\p \left(Y_n^{m}\in F \right)\le
-\inf_{x\in F} I_m(x).
\end{align*}
At last we extend the above upper bound to any closed sets $F\subset \h^m$ by using the fact that $\p(Y_n^m\in \cdot)$ is an exponentially tight family of probability measures and by \cite[Lemma 1.2.18]{DemboZeitouniBook2010}. Let $H_r:=[-r,r]^{\dH m}$, then $H^c_r=\cup_{j=1}^{\dH m}\{x:|x_j|>r\}$, hence
\begin{align*}
\p(Y_n^m\in H^c_r)&\leqslant \sum_{j=1}^m \sum_{i=1}^\dH \p(Y_n^{m,j,i}\geqslant r)+ \p(Y_n^{m,j,i}\leqslant -r),
\end{align*}
where $Y_n^{m,j,i}$ represent the coordinates of $Y_n^{m,j}$, that is, $Y_n^{m,j}=(Y_n^{m,j,1},\dots,Y_n^{m,j,\dH})\in \h$.
Again by the Markov inequality we know that
\[
\limsup_{n \to \infty}\frac1n\log \p(Y_n^{m,j,i}\geqslant r)\leqslant -\Lambda^{\ast}(r),\quad\limsup_{n \to \infty} \frac1n\log \p(Y_n^{m,j,i}\leqslant -r)\leqslant -\Lambda^{\ast}(-r),
\]
where $\Lambda^{\ast}(r)=\sup_{\lambda\in \R} \{r\lambda- \limsup_{n\to\infty}\log \E(\exp(n\lambda Y^{m,j,i}))\}$, which tends to $\infty$ as $r\to\infty$.
Hence we have that
\[
\lim_{r\to\infty}\limsup_{n\to\infty}\frac1n\log \p(Y_n^m\in H^c_r)=-\infty.
\]
Since for any closed $F\subset \h^m$, we have
\[
\p(Y_n^m\in F)\leqslant \p(Y_n^m\in F\cap H_r)+\p(Y_n^m\in H^c_r),
\]
and hence
\begin{align*}
\limsup_{n\to\infty}\frac1n\log \p(Y_n^m\in F)&\leqslant \lim_{r\to \infty}\limsup_{n\to\infty}\frac1n\log \p(Y_n^m\in F\cap H_r)\\
&\leqslant -\lim_{r\to \infty}\inf_{x\in F\cap H_r} I_m(x)=-\inf_{x\in F} I_m(x).
\end{align*}
\end{proof}

\section{Random symplectic form}\label{a.RSymplF}
Suppose $\omega$ is a symplectic form on $\mathbb{R}^{N}$ and define

\begin{equation*}
\omega_{ij}:=\omega\left( v^{i}, v^{j} \right)
\end{equation*}
for any $v^{i}, v^{j} \in \mathbb{R}^{N}$.  By antisymmetry of the symplectic form $\omega$ we see that for $k\leqslant \ell \leqslant n$ and $\mathbf{v}=\left\{ v^{i} \right\}_{i=1}^{n}$, $v^{i} \in \mathbb{R}^{N}$ we have

\begin{equation*}
W_{k, \ell}\left( \mathbf{v} \right):=\omega\left( \sum_{i=1}^{k}v^{i}, \sum_{j=1}^{\ell}v^{j}\right)=
\sum_{i=1}^{k}\sum_{j=i+1}^{\ell}
\omega_{ij}.
\end{equation*}
Suppose now that $v^{1}=\left( x^{1}, y^{1} \right), \ldots , v^{n}=\left( x^{n}, y^{n} \right)$ is a collection of random variables  in $\mathbb{R}^{N} \times \mathbb{R}^{N}$, then we can consider the distribution of the \emph{random symplectic form} $W_{k, \ell}$.  Let $k \leqslant \ell \leqslant n$, and $u_{k}^{i}=\left( x_{k}^{i}, y_{k}^{i} \right)$, $k\in [N]$, $i\in[n]$, then
\begin{align*}
& W_{k,l}=\sum_{i=k+1}^{\ell}\sum_{j=i+1}^{\ell}\omega_{ij}=X^{T} A Y, k \leqslant \ell, \text{ where }  \nonumber
\\
& X=\left( x_{1}, \ldots , x_{n} \right), Y=\left( y_{1}, \ldots , y_{n} \right), \nonumber
\\
& A=\left\{ a_{ij} \right\}_{i, j =1}^{n} \text{ with } -a_{ji}=a_{ij}=1 \text{ for }  k+1 \leqslant i <j \leqslant \ell \text{ and } 0 \text{ otherwise},
\end{align*}
so $A$ is a skew-symmetric Toeplitz matrix.  Observe that the Hilbert-Schmidt norm and operator norms of $A$ are
\begin{align}\label{e.Wnorms}
& \Vert A \Vert_{HS}=\left( \sum_{i=k+1}^{\ell}\sum_{j=i+1}^{\ell}\vert a_{ij} \vert^{2}\right)^{1/2}=\sqrt{\left( \ell-k+1\right)\left( \ell-k\right)},\notag
\\
& \Vert A \Vert \leqslant  \ell-k.
\end{align}

Before we describe the distribution of $W_{k, \ell}$, we recall  the definition of a sub-Gaussian random variable. Several equivalent characterizations of sub-Gaussian random variables are given in \cite[Proposition 2.5.2]{VershyninBook2018}. We denote the sub-Gaussian norm (Orlicz norm) by
\[
 \|X\|_{\psi_{2}}:=\inf\left\{s>0: \mathbb{E} e^{(X/s)^{2}}-1\leqslant 1 \right\}.
\]
We say that $X$  is a (centered) sub-Gaussian random variables, if $\mathbb{E} X=0$, and the moment generating function of $X$ satisfies
\[
\mathbb{E} \exp\left( \lambda X \right) \leqslant \exp\left( C \lambda^{2} \|X\|_{\psi_{2}}^{2} \right)
\]
for any $\lambda \in \mathbb{R}$ and some absolute constant $C>0$.
If $X$ and $Y$ are i.i.d. sub-Gaussian random variables, then by \cite[Lemma 2.7.7]{VershyninBook2018} $XY$ is sub-exponential with
\[
 \|XY\|_{\psi_{1}}:=\inf\left\{s>0: \mathbb{E} e^{\frac{\vert XY \vert}{s}}\leqslant 2 \right\} \leqslant \|X\|_{\psi_{2}}\|Y\|_{\psi_{2}}.
\]
This means that by \cite[Proposition 2.7.1]{VershyninBook2018}	there is a constant $K >0$ such that
\[
\mathbb{E} e^{\lambda \vert XY \vert}\leqslant \exp\left( K \lambda \right)
\]
for any $0 \leqslant \lambda \leqslant 1/K$.

\begin{prop}[Properties of random symplectic forms]\label{l.Wij}
Let $\left( x_{n}, y_{n} \right)_{n=1}^{\infty},$ be a sequence of $\mathbb{R}^{2}$-valued i.i.d. random variables with mean zero and  variance $\left( 1, 1 \right)$.

(1) Suppose $X=\left( x_{1}, \ldots , x_{n} \right)$ and $Y=\left( y_{1}, \ldots , y_{n} \right)$ are i.i.d. standard normal random variables. Then the  random variable $W_{ k, \ell}$ is distributed as a linear combination  $\chi$-square distributed variables with coefficients depending on the eigenvalues of the matrix $W$. In addition, there are universal constants $c>0$ and $C>0$ such that
\[
\mathbb{E} e^{ \lambda W_{k, \ell}} \leqslant e^{ C \lambda^{2} \left( \ell-k+1\right)\left( \ell-k\right)} 	
\]
for all $\lambda$ satisfying $\vert \lambda \vert \leqslant c$.

(2) Suppose $X=\left( x_{1}, \ldots , x_{n} \right)$ and $Y=\left( y_{1}, \ldots , y_{n} \right)$ are i.i.d. sub-Gaussian random variables. Then there is a universal constant $D>0$ such that
\begin{align*}
& \mathbb{E} e^{ \lambda W_{k, \ell}}  \leqslant
e^{ C
D^{2} \Vert X \Vert_{\psi_{2}}^{4} \lambda^{2}
 \left( \ell-k+1\right)\left( \ell-k\right)}	
\end{align*}
for all
\[
\vert \lambda \vert \leqslant \frac{c}{D\Vert X \Vert_{\psi_{2}}^{2}}.
\]
Here $C$ and $c$ are the universal constants in (1).
\end{prop}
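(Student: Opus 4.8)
The plan is to write $W_{k,\ell}=X^{T}AY$ as a Gaussian (resp.\ sub-Gaussian) quadratic chaos and to read off both its distribution and its moment generating function from the spectral decomposition of the skew-symmetric matrix $A$, using the norm identities recorded in \eqref{e.Wnorms}. Since $A^{T}=-A$, the $2n\times 2n$ block matrix
\[
\hat A:=\begin{pmatrix} 0 & A\\ -A & 0\end{pmatrix}
\]
is symmetric, and for $Z=(X,Y)\in\R^{2n}$ a direct computation gives $Z^{T}\hat AZ=2\,X^{T}AY$, so $W_{k,\ell}=\tfrac12 Z^{T}\hat AZ$. In the Gaussian case $Z\sim\mathcal N(0,I_{2n})$; diagonalizing $\hat A=O\,\mathrm{diag}(\lambda_{1},\dots,\lambda_{2n})\,O^{T}$ with $O$ orthogonal and invoking rotation invariance of $Z$ yields
\[
W_{k,\ell}\ \stackrel{d}{=}\ \tfrac12\sum_{i=1}^{2n}\lambda_{i}\xi_{i}^{2},\qquad \xi_{i}\ \text{i.i.d.}\ \mathcal N(0,1),
\]
which exhibits $W_{k,\ell}$ as a linear combination of (scaled) $\chi^{2}_{1}$ variables with coefficients $\tfrac12\lambda_{i}$. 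Since $\hat A^{2}=\mathrm{diag}(A^{T}A,\,A^{T}A)$, the nonzero $\lambda_{i}$ come in pairs $\pm\mu_{j}$, the $\mu_{j}$ being the singular values of $A$ (equivalently the moduli of its purely imaginary eigenvalues); this is the first assertion of~(1).

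For the bound in~(1) I would use that independence of the $\xi_{i}$ gives
\[
\mathbb E\!\left[e^{\lambda W_{k,\ell}}\right]=\prod_{i=1}^{2n}(1-\lambda\lambda_{i})^{-1/2}=\det(I-\lambda\hat A)^{-1/2},
\]
which is finite precisely when $|\lambda|\,\Vert A\Vert<1$ (here $\Vert A\Vert$ is the operator norm, and $\Vert\hat A\Vert=\Vert A\Vert$ since the two matrices share singular values up to multiplicity). Using the $\pm$ symmetry of the spectrum,
\begin{align*}
\log\mathbb E\!\left[e^{\lambda W_{k,\ell}}\right]
&=-\tfrac12\sum_{i}\log(1-\lambda\lambda_{i})
=-\tfrac12\sum_{\lambda_{i}>0}\log\!\left(1-\lambda^{2}\lambda_{i}^{2}\right)\\
&\leqslant \lambda^{2}\sum_{\lambda_{i}>0}\lambda_{i}^{2}=\lambda^{2}\Vert A\Vert_{HS}^{2},
\end{align*}
where the elementary inequality $-\log(1-x)\leqslant 2x$ (valid for $0\leqslant x\leqslant\tfrac12$) is applied termwise, legitimate once $\lambda^{2}\Vert A\Vert^{2}\leqslant\tfrac12$, and where $\sum_{\lambda_{i}>0}\lambda_{i}^{2}=\tfrac12\Vert\hat A\Vert_{HS}^{2}=\Vert A\Vert_{HS}^{2}$. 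Inserting \eqref{e.Wnorms}, namely $\Vert A\Vert_{HS}^{2}=(\ell-k+1)(\ell-k)$ and $\Vert A\Vert\leqslant\ell-k$, gives~(1), with the admissible range $|\lambda|\leqslant c/\Vert A\Vert$ (which, via \eqref{e.Wnorms}, contains $|\lambda|\leqslant c/(\ell-k)$).

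For~(2) I would invoke the comparison lemma \cite[Lemma 6.2.3]{VershyninBook2018}: for independent mean-zero sub-Gaussian vectors $X,X'$ with $\Vert X\Vert_{\psi_{2}},\Vert X'\Vert_{\psi_{2}}\leqslant K$ and any matrix $A$,
\[
\mathbb E\!\left[e^{\lambda X^{T}AX'}\right]\leqslant \mathbb E\!\left[e^{C_{1}K^{2}\lambda\,G^{T}AG'}\right]
\]
for independent standard Gaussians $G,G'$. Applying this with $K=\Vert X\Vert_{\psi_{2}}$ and then Part~(1) with $\lambda$ replaced by $C_{1}K^{2}\lambda$ gives
\[
\mathbb E\!\left[e^{\lambda W_{k,\ell}}\right]\leqslant \exp\!\left(C\,C_{1}^{2}\Vert X\Vert_{\psi_{2}}^{4}\,\lambda^{2}\Vert A\Vert_{HS}^{2}\right)
\]
whenever $C_{1}K^{2}|\lambda|\leqslant c/\Vert A\Vert$; setting $D:=C_{1}$ and using \eqref{e.Wnorms} yields~(2). (Alternatively, one may quote the Gaussian chaos bound \cite[Lemma 6.2.2]{VershyninBook2018} directly in place of re-deriving it from Part~(1).)

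\textbf{Main obstacle.} The argument is largely bookkeeping once the spectral structure of a skew-symmetric matrix is set up; the two points needing care are (i) tracking multiplicities so that $\sum_{\lambda_{i}>0}\lambda_{i}^{2}$ is exactly $\Vert A\Vert_{HS}^{2}$ and $\max_{i}|\lambda_{i}|$ is exactly $\Vert A\Vert$, and (ii) the range of $\lambda$: the moment generating function is finite only for $|\lambda|\lesssim 1/\Vert A\Vert$, so the admissible range genuinely shrinks with $\ell-k$ and must be read through \eqref{e.Wnorms}. For~(2) the only extra thing to verify is that $\Vert X\Vert_{\psi_{2}}<\infty$ and that the comparison lemma's hypotheses (independent coordinates, mean zero) hold, which they do by assumption.
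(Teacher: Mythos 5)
Your proof is correct, and for part (1) it is somewhat more self-contained than the paper's. For the distributional claim the paper conjugates the skew-symmetric $A$ into its real canonical block form (with $2\times2$ blocks having off-diagonal entries $\pm\theta$), uses rotation invariance of the Gaussian vectors, and then applies the identity $UV=\tfrac{(U+V)^2}{4}-\tfrac{(U-V)^2}{4}$ blockwise to exhibit $W_{k,\ell}$ as a combination of $\chi^2$ variables; your symmetrization $\hat A$ achieves the same conclusion in a single diagonalization and has the advantage of making the $\pm$ pairing of the spectrum and the identities $\sum_{\lambda_i>0}\lambda_i^2=\Vert A\Vert_{HS}^2$ and $\max_i|\lambda_i|=\Vert A\Vert$ completely transparent. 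For the moment generating function bound the paper simply cites the Gaussian chaos estimate \cite[Lemma 6.2.2]{VershyninBook2018}, whereas you re-derive it from $\det(I-\lambda\hat A)^{-1/2}$ together with $-\log(1-x)\leqslant 2x$ on $[0,\tfrac12]$; both are legitimate, and yours produces explicit constants. Part (2) coincides with the paper's argument via the comparison lemma \cite[Lemma 6.2.3]{VershyninBook2018}. One place where your bookkeeping is actually more accurate than the source: the admissible range is genuinely $|\lambda|\leqslant c/\Vert A\Vert$, which via \eqref{e.Wnorms} only guarantees the estimate for $|\lambda|\leqslant c/(\ell-k)$, whereas the proposition (and the paper's proof) states the fixed range $|\lambda|\leqslant c$, silently dropping the $\Vert A\Vert$ in the denominator. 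Since in the intended applications $\lambda$ is taken to shrink with $n$, the smaller range you record is both the honest one and the one that suffices.
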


\begin{proof}
First observe that there is an orthogonal matrix $R$ such that $R^{T}AR$ is block diagonal with blocks being
\[
\left(
  \begin{array}{cc}
    0 & \theta \\
    -\theta & 0 \\
  \end{array}
\right).
\]
Note that $\pm i \theta$ are eigenvalues of $A$, and  they can be found as roots of the characteristic polynomial. We denote by $B$ the matrix

\[
B=\left\{ b_{ij} \right\}_{i, j =1}^{N}, -b_{ji}=b_{ij}=
1 \text{ for }  1 \leqslant i <j \leqslant N \text{ and } 0 \text{ otherwise},
\]
whose characteristic polynomial is
\begin{align*}
& p_{2N}\left( \lambda \right)=\sum_{l=0}^{N}\binom{2N}{2l} \lambda^{2l},
\\
& p_{2N+1}\left( \lambda \right)=-\sum_{l=0}^{N}\binom{2N+1}{2l+1} \lambda^{2l+1},
\end{align*}
since $p_{N+1}\left( \lambda \right)=-\left( \lambda -1 \right)p_{N}\left( \lambda \right)+\left( \lambda +1 \right)^{N}$.

Now we assume that $X=\left( x_{1}, \ldots , x_{n} \right)$ and $Y=\left( y_{1}, \ldots , y_{n} \right)$ are i.i.d. standard normal random variables. Recall that the original matrix $A$ has a copy of a matrix $B$ as a block on the diagonal with all other entries being $0$. Note that the distribution of random vectors $X$ and $Y$ is invariant under orthogonal transformations, so we only need to determinate the distribution $X^{T} A Y$ when $A$ is block diagonal. Moreover, it is enough to look at one block
\[
\left( x_{1}, x_{2} \right)\left(
  \begin{array}{cc}
    0 & \theta \\
    -\theta & 0 \\
  \end{array}
\right)
\left( y_{1}, y_{2} \right)^{T}=\theta\left( x_{1}y_{2}-x_{2}y_{1} \right),
\]
where $x_{1}, x_{2}, y_{1}, y_{2}$ are i.i.d. standard normal random variables.

First consider $U$ and $V$ which are i.i.d. standard normal random variables. Then $U-V$ and $U+V$ are two independent variables distributed as $\mathcal{N}\left( 0, 2 \right)$, and
\[
UV=\frac{\left( U+V \right)^{2}}{4}-\frac{\left( U-V \right)^{2}}{4}
\]
thus $UV$ is distributed as the difference of two $\chi$-square distributed variables. This has a variance-gamma distribution which can be shown by using the moment generating functions. Thus the non-zero components of $X^{T} A Y$ are linear combinations of two $\chi$-square distributed variables.

To prove the estimate we use \cite[Lemma 6.2.2]{VershyninBook2018} giving an estimate of the moment generating function of Gaussian chaos. Namely, if $X$ and $Y$ are independent standard Gaussian vectors, $A$ is a matrix, then
\[
\mathbb{E} \exp\left( \lambda X^{T} A Y  \right) \leqslant  \exp\left(  C \lambda^{2} \Vert A \Vert_{HS}^{2} \right)
\]
for all $\lambda$ satisfying $\vert \lambda \vert \leqslant \frac{c}{\Vert A \Vert}$. Here the constant $C$ is a universal constant for a product of standard Gaussian variables and $c$ is a universal constant for sub-exponential variables. Thus by \eqref{e.Wnorms}
\[
\mathbb{E} \exp\left( \lambda W_{k, \ell}  \right) \leqslant  \exp\left(  C \lambda^{2} \left( \ell-k+1\right)\left( \ell-k\right) \right) 	
\]
for all $\lambda$ satisfying $\vert \lambda \vert \leqslant c$. Note that we can get a similar result from the previous argument as $\chi$-square distributed random variables are sub-exponential and the constants in \cite[Lemma 6.2.2]{VershyninBook2018} use singular numbers of the matrix $A$ which can be estimated by using the Hilbert-Schmidt norm.

To prove the second part of the statement we will use  \cite[Lemma 6.2.3]{VershyninBook2018}  which allows a comparison of the moment generating functions of sub-Gaussian and Gaussian random variables. Suppose $X=\left( x_{1}, \ldots , x_{n} \right)$ and $Y=\left( y_{1}, \ldots , y_{n} \right)$ are i.i.d. sub-Gaussian random variables, $A$ is a matrix, and $U$ and $V$ are two independent standard Gaussian vectors. Denote $K:=\Vert X \Vert_{\psi_{2}}=\Vert Y \Vert_{\psi_{2}}$, then by \cite[Lemma 6.2.3]{VershyninBook2018} there is a universal constant $D>0$ such that
\[
\mathbb{E} \exp\left( \lambda X^{T} A Y  \right) \leqslant  \exp\left(  D K^{2} \lambda U^{T} A V \right)
\]
for any $\lambda  \in \mathbb{R}$. Then we can use the first part applied to $U^{T} A V$ and $\widetilde{\lambda}= D \Vert X \Vert_{\psi_{2}}^{2} \lambda$ to see that
\begin{align*}
& \mathbb{E} \exp\left( \lambda X^{T} A Y  \right) \leqslant \exp\left(  D \Vert X \Vert_{\psi_{2}}^{2} \lambda U^{T} A V \right)
\\
& \leqslant  \exp\left(  C \widetilde{\lambda}^{2} \left( \ell-k+1\right)\left( \ell-k\right) \right) =\exp\left(  C
D^{2} \Vert X \Vert_{\psi_{2}}^{4} \lambda^{2}
 \left( \ell-k+1\right)\left( \ell-k\right) \right)	
\end{align*}
for all $\vert\widetilde{\lambda} \vert \leqslant c$, that is, $\vert \lambda \vert \leqslant c/D\Vert X \Vert_{\psi_{2}}^{2}$.
\end{proof}

\end{document}